\newtheorem{defn0}{Definition}[section]
\newtheorem{prop0}[defn0]{Proposition}
\newtheorem{thm0}[defn0]{Theorem}
\newtheorem{lemma0}[defn0]{Lemma}
\newtheorem{corollary0}[defn0]{Corollary}
\newtheorem{example0}[defn0]{Example}
\newtheorem{remark0}[defn0]{Remark}
\newtheorem{conjecture0}[defn0]{Conjecture}
\newenvironment{definition}{\medskip \begin{defn0}}{\end{defn0}}
\newenvironment{proposition}{\medskip \begin{prop0}}{\end{prop0}}
\newenvironment{theorem}{\medskip \begin{thm0}}{\end{thm0}}
\newenvironment{lemma}{\medskip \begin{lemma0}}{\end{lemma0}}
\newenvironment{corollary}{\medskip \begin{corollary0}}{\end{corollary0}}
\newenvironment{example}{\medskip \begin{example0}\rm}{\end{example0}}
\newenvironment{remark}{ \medskip\begin{remark0}\rm}{\end{remark0}}
\def\thetime{\timehour=\time
\divide\timehour by60 \minleft=\timehour \multiply\minleft by -60
\advance\minleft by\time \ifnum\time>720\advance\timehour
by-12\fi\relax
\number\timehour:\ifnum\minleft<10 %
    0\fi\relax\number\minleft
    \ifnum\time>720~pm \else~am\fi}
\newcommand{\m}{\mathfrak m}
\def\res{{\bf k}}                   
\def\ann{\operatorname{Ann_R}}
\def\soc{\operatorname{soc}}
\def\socdeg{\operatorname{socdeg}}
\def\embd{\operatorname{emb dim}}
\def\gcl{\operatorname{gcl}}
\def\rad{\operatorname{rad}}
\def\rk{\operatorname{rk}}
\def\HF{\operatorname{HF}}
\begin{document}
\title[Computing minimal Gorenstein covers]{{\bf
Computing minimal Gorenstein covers}}
\author[J. Elias]{Juan Elias ${}^{*}$}
\thanks{${}^{*}$
Partially supported by  MTM2016-78881-P}
\address{Juan Elias
\newline \indent Departament de Matem\`atiques i Inform\`atica
\newline \indent Facultat de Matem\`{a}tiques
\newline \indent Universitat de Barcelona
\newline \indent Gran Via 585, 08007
Barcelona, Spain}  \email{{\tt elias@ub.edu}}

\author[R. Homs]{Roser Homs ${}^{**}$}
\thanks{${}^{**}$
Partially supported by MTM2016-78881-P, BES-2014-069364 and EEBB-I-17-12700.}

\address{Roser Homs
\newline \indent Departament de Matem\`atiques i Inform\`atica
\newline \indent Facultat de Matem\`{a}tiques
\newline \indent Universitat de Barcelona
\newline \indent Gran Via 585, 08007
Barcelona, Spain}  \email{{\tt rhomspon7@ub.edu}}

\author[B. Mourrain]{Bernard Mourrain ${}^{***}$}
\thanks{${}^{***}$
Partially supported by the European Union's Horizon 2020 research and innovation programme under the Marie Skłodowska-Curie grant agreement No 813211 of POEMA”.
\\
\rm \indent 2010 MSC:  Primary
13H10; Secondary 13H15; 13P99}

\address{Bernard Mourrain
\newline \indent AROMATH
\newline \indent INRIA - Sophia Antipolis M\'editeran\'ee
\newline \indent 2004 route des Lucioles, 06902 Sophia Antipolis, France
} \email{{\tt bernard.mourrain@inria.fr}}

\begin{abstract}
We analyze and present an effective solution to the minimal Gorenstein cover problem:
given a local Artin $\res$-algebra $A=\res[\![x_1,\dots,x_n]\!]/I$, compute an Artin Gorenstein $\res$-algebra $G=\res[\![x_1,\dots,x_n]\!]/J$ such that $\ell(G)-\ell(A)$ is minimal. We approach the problem by using Macaulay's inverse systems and a modification of the integration method for inverse systems to compute Gorenstein covers. We propose new characterizations of the minimal Gorenstein cover and present a new algorithm for the effective computation of the variety of all minimal Gorenstein covers of $A$ for low Gorenstein colength. Experimentation illustrates the practical behavior of the method.
\end{abstract}

\maketitle

\section{Introduction}

Given a local Artin $\res$-algebra $A=R/I$, with $R=\res[\![x_1,\dots,x_n]\!]$, an interesting problem is to find how far is it from being Gorenstein. In \cite{Ana08}, Ananthnarayan introduces  for the first time the notion of Gorenstein colength, denoted by $\gcl(A)$, as the minimum of $\ell(G)-\ell(A)$ among all Gorenstein Artin $\res$-algebras $G=R/J$ mapping onto $A$. Two natural questions arise immediately:

\medskip

{\sc Question A}: How we can explicitly compute the Gorenstein colength of a given local Artin $\res$-algebra $A$?

\medskip

{\sc Question B}: Which are its minimal Gorenstein covers, that is, all Gorenstein rings $G$ reaching the minimum $\gcl(A)=\ell(G)-\ell(A)$?

\medskip
Ananthnarayan generalizes some results by Teter \cite{Tet74} and Huneke-Vraciu \cite{HV06} and provides a characterization of rings of $\gcl(A)\leq 2$ in terms of the existence of certain self-dual ideals $\mathfrak{q}\in A$ with respect to the canonical module $\omega_A$ of $A$ satisfying $\ell(A/\mathfrak{q})\leq 2$. For more information on this, see \cite{Ana08} or \cite[Section 4]{EH18}, for a reinterpretation in terms of inverse systems. Later on, Elias and Silva (\cite{ES17}) address the problem of the colength from the perspective of Macaulay's inverse systems. In this setting, the goal is to find polynomials $F\in S$ such that $I^\perp\subset \langle F\rangle$ and $\ell(\langle F\rangle)-\ell(I^\perp)$ is minimal. Then the Gorenstein $\res$-algebra $G=R/\ann F$ is a minimal Gorenstein cover of $A$. A precise characterization of such polynomials $F\in S$ is provided for $\gcl(A)=1$ in \cite{ES17} and for $\gcl(A)=2$ in \cite{EH18}.

However, the explicit computation of the Gorenstein colength of a given ring $A$ is not an easy task even for low colength - meaning $\gcl(A)$ equal or less than 2 - in the general case. For examples of computation of colength of certain families of rings, see \cite{Ana09} and \cite{EH18}.

On the other hand, if $\gcl(A)=1$, the Teter variety introduced in \cite[Proposition 4.2]{ES17} is precisely the variety of all minimal Gorenstein covers of $A$ and \cite[Proposition 4.5]{ES17} already suggests that a method to compute such covers is possible.

\bigskip
In this paper we address questions A and B by extending the previous definition of Teter variety of a ring of Gorenstein colength 1 to the variety of minimal Gorenstein covers $MGC(A)$ where $A$ has arbitrary Gorenstein colength $t$. We use a constructive approach based on the integration method to compute inverse systems proposed by Mourrain in \cite{Mou96}.

In section 2 we recall the basic definitions of inverse systems and introduce the notion of integral of an $R$-module $M$ of $S$ with respect to an ideal $K$ of $R$, denoted by $\int_K M$. Section 3 links generators $F\in S$ of inverse systems $J^\perp$ of Gorenstein covers $G=R/J$ of $A=R/I$ with elements in the integral $\int_{\m^t} I^\perp$, where $\m$ is the maximal ideal of $R$ and $t=\gcl(A)$. This relation is described in \Cref{F} and \Cref{propint} sets the theoretical background to compute a $\res$-basis of the integral of a module extending Mourrain's integration method.

In section 4, \Cref{ThMGC} proves the existence of a quasi-projective sub-variety $MGC^n(A)$ whose set of closed points are associated to polynomials $F\in S$ such that $G=R/\ann F$ is a minimal Gorenstein cover of $A$. Section 5 is devoted to algorithms: explicit methods to compute a $\res$-basis of $\int_{\m^t}I^\perp$ and $MGC(A)$ for colengths 1 and 2. Finally, in section 6 we provide several examples of the minimal Gorenstein covers variety and list the comptutation times of $MGC(A)$ for all analytic types of $\res$-algebras with $\gcl(A)\leq 2$ appearing in Poonen's classification in \cite{Poo08a}.

All algorithms appearing in this paper have been implemented in \emph{Singular}, \cite{DGPS}, and the library \cite{E-InvSyst14} for inverse system has also been used.

\medskip
{\sc Acknowledgements:} The second author wants to thank the third author for the opportunity to stay at INRIA Sophia Antipolis - M\'editerran\'ee (France) and his hospitality during her visit on the fall of 2017, where part of this project was carried out. This stay was financed by the Spanish Ministry of Economy and Competitiveness through the Estancias Breves programme (EEBB-I-17-12700).

\section{Integrals and inverse systems}

Let us consider the regular local ring $R=\res[\![x_1,\dots,x_n]\!]$ over an arbitrary field $\res$, with maximal ideal $\m$. Let $S=\res[y_1,\dots,y_n]$ be the polynomial ring over the same field $\res$. Given $\alpha=(\alpha_1,\dots,\alpha_n)$ in $\mathbb{N}^n$, we denote by $x^\alpha$ the monomial $x_1^{\alpha_1}\cdots x_n^{\alpha_n}$ and set $\vert\alpha\vert=\alpha_1+\dots+\alpha_n$. Recall that $S$ can be given an $R$-module structure by contraction:
$$
\begin{array}{cccc}
  R\times S & \longrightarrow & S &   \\
  (x^\alpha, y^\beta) & \mapsto & x^\alpha \circ y^\beta = &
  \left\{
    \begin{array}{ll}
      y^{\beta-\alpha}, &  \beta \ge \alpha; \\
      0, & \mbox{otherwise.}
    \end{array}
  \right.
\end{array}
$$

The Macaulay inverse system of $A=R/I$ is the sub-$R$-module $I^\perp=\lbrace G\in S\mid I\circ G=0\rbrace$ of $S$. This provides the order-reversing bijection between $\m$-primary ideals $I$ of $R$ and finitely generated sub-$R$-modules $M$ of $S$ described in Macaulay's duality. As for the reverse correspondence, given a sub-$R$-module $M$ of $S$, the module $M^\perp$ is the ideal $\ann M=\lbrace f\in R\mid f\circ G=0\,\mbox{ for any } G\in M\rbrace$ of $R$. Moreover, it characterizes zero-dimensional Gorenstein rings $G=R/J$ as those with cyclic inverse system $J^\perp=\langle F\rangle$, where $\langle F\rangle$ is the $\res$-vector space $\langle x^\alpha\circ F:\vert\alpha\vert\leq \deg F\rangle_\res$. For more details on this construction, see \cite{ES17} and \cite{EH18}.

Consider an Artin local ring $A=R/I$ of socle degree $s$ and inverse system $I^\perp$. We are interested in finding Artin local rings $R/\ann F$ that cover $R/I$, that is $I^\perp\subset \langle F\rangle$, but we also want to control how apart are those two inverse systems. In other words, given an ideal $K$, we want to find a Gorenstein cover $\langle F\rangle$ such that $K\circ \langle F\rangle\subset I^\perp$. Therefore it makes sense to think of an inverse operation to contraction.

\begin{definition}[Integral of a module with respect to an ideal] Consider an $R$-submodule $M$ of $S$. We define the integral of $M$ with respect to the ideal $K$, denoted by $\int_K M$, as $$\int_K M=\lbrace G\in S\mid K\circ G\subset M\rbrace.$$

\end{definition}

Note that the set $N=\lbrace G\in S\mid K\circ G\subset M\rbrace$ is, in fact, an $R$-submodule $N$ of $S$ endowed with the contraction structure. Indeed,
given $G_1,G_2\in N$ then $K\circ (G_1+G_2)=K\circ G_1+K\circ G_2\subset M$, hence $G_1+G_2\in N$.
For all $a\in R$ and  $G\in N$ we have  $K\circ (a\circ G)=aK\circ G=a\circ (K\circ G)\subset M$, hence $a\circ G\in N$.

\begin{proposition}
\label{integral}
Let $K$ be an $\m$-primary ideal of $R$ and let $M$ be a finitely generated sub-$R$-module of $S$. Then $$\int_K M=\left(KM^\perp\right)^\perp.$$
\end{proposition}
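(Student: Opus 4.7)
The plan is to prove the equality of the two sets $\int_K M$ and $(KM^\perp)^\perp$ by a direct chain of biconditionals, using only two ingredients: the fact that the contraction action is an $R$-module action (so $(fg)\circ G = f\circ(g\circ G)$ for $f,g\in R$, $G\in S$), and Macaulay duality in the form $(M^\perp)^\perp = M$ for finitely generated sub-$R$-modules $M$ of $S$.

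First I would unpack the right-hand side. Since $M^\perp$ is an ideal of $R$ and $K$ is an ideal of $R$, the product $KM^\perp$ is an ideal generated by all $fh$ with $f\in K$, $h\in M^\perp$. Thus $G\in (KM^\perp)^\perp$ if and only if $(fh)\circ G = 0$ for every $f\in K$ and every $h\in M^\perp$ (bilinearity of the action takes care of finite sums). Using the $R$-module action identity, this becomes $h\circ(f\circ G)=0$ for all $f\in K$, $h\in M^\perp$, which is exactly saying that $f\circ G \in (M^\perp)^\perp$ for every $f\in K$.

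Next I would invoke Macaulay duality: because $M$ is a finitely generated sub-$R$-module of $S$, it corresponds under the duality to an $\m$-primary ideal $M^\perp$, and the correspondence is reflexive, so $(M^\perp)^\perp = M$. Substituting this in, $G\in (KM^\perp)^\perp$ becomes $f\circ G \in M$ for every $f\in K$, i.e.\ $K\circ G\subset M$, which is the definition of $\int_K M$. Reading the chain in either direction gives both inclusions.

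The only real point of care is the appeal to Macaulay's reflexivity; the hypothesis that $M$ is finitely generated is exactly what is needed for $(M^\perp)^\perp = M$, and the hypothesis that $K$ is $\m$-primary guarantees that $KM^\perp$ is itself $\m$-primary (so that both sides of the desired equality are, in fact, finitely generated sub-$R$-modules of $S$, as one would hope). The module-action identity is standard from the definition of contraction and requires no calculation beyond comparing monomials. I expect no substantive obstacle; the proof should fit in a few lines once the biconditional chain is set up carefully.
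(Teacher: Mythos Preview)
Your proposal is correct and follows essentially the same approach as the paper: both argue via the associativity of the contraction action to reduce $(KM^\perp)\circ G=0$ to $M^\perp\circ(K\circ G)=0$, and then use Macaulay duality $(M^\perp)^\perp=M$ to identify this with $K\circ G\subset M$. The only cosmetic difference is that you frame it as a single biconditional chain and make the appeal to reflexivity explicit, whereas the paper writes out the two inclusions separately and leaves the step ``$M^\perp\circ(K\circ G)=0\Rightarrow K\circ G\subset M$'' implicit.
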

\begin{proof}
Let $G\in\left(KM^\perp\right)^\perp$. Then $\left(KM^\perp\right)\circ G=0$, so $M^\perp\circ\left(K\circ G\right)=0$. Hence $K\circ G\subset M$, i.e. $G\in\int_K M$. We have proved that $\left(KM^\perp\right)^\perp\subseteq\int_K M$.
Now let $G\in\int_K M$. By definition, $K\circ G\subset M$, so $M^\perp\circ\left(K\circ G\right)=0$ and hence $\left(M^\perp K\right)\circ G=0$. Therefore, $G\in\left(M^\perp K\right)^\perp$.
\end{proof}

One of the key results of this paper is the effective computation of  $\int_K M$ (see \Cref{AlINT}).
Last result gives a method for the computation of this module by computing two Macaulay duals. However, since computing Macaulay duals is expensive, \Cref{AlINT} avoids the computation of such duals.

\begin{remark}\label{incl} The following properties hold:
\noindent
$(i)$ Given $K\subset L$ ideals of $R$ and $M$ $R$-module, if $K\subset L$, then $\int_L M\subset\int_K M.$
\noindent
$(ii)$
Given $K$ ideal of $R$ and $M\subset N$ $R$-modules, if $M\subset N$, then $\int_K M\subset\int_K N.$
\noindent
$(iii)$
Given any $R$-module $M$, $\int_ R M=M$.
\end{remark}

The inclusion $K\circ \int_K M\subset M$ follows directly from the definition of integral. However, the equality does not hold:

\begin{example}\label{Ex1}
Let us consider $R=\res[\![x_1,x_2,x_3]\!]$, $K=(x_1,x_2,x_3)$, $S=\res[y_1,y_2,y_3]$ and $M=\langle y_1y_2,y_3^3\rangle$. 
We can compute Macaulay duals with the \emph{Singular} library \textbf{Inverse-syst.lib}, see \cite{E-InvSyst14}. We get $\int_K M=\langle y_1^2,y_1y_2,y_1y_3,y_2^2,y_2y_3,y_3^4 \rangle$ by \Cref{integral} and hence $K\circ \int_K M=\langle y_1,y_2,y_3^3\rangle\subsetneq M.$
\end{example}

We also have the inclusion $M\subset\int_K K\circ M$. Indeed, for any $F\in M$, $K\circ F\subset K\circ M$ and hence $F\in\int_K K\circ M=\lbrace G\in S\mid K\circ G\subset K\circ M\rbrace$. Again, the equality does not hold.

\begin{example} Using the same example as in \Cref{Ex1}, we get
$K\circ M=\m\circ \langle y_1y_2,y_3^3\rangle=\langle y_1,y_2,y_3^2\rangle,$ and
$\int_K( K\circ M)=\left(K (K\circ M)^\perp\right)^\perp=\langle y_1^2,y_1y_2,y_1y_3,y_2^2,y_2y_3,y_3^2\rangle\nsubseteq M.$
\end{example}

\begin{remark}
Note that if we integrate with respect to a principal ideal $K=(f)$ of $R$, then 
$\int_K M=\lbrace G\in S\mid f\circ G\in M\rbrace$. Hence in this case we will denote it by $\int_f M$.
\end{remark}

In particular, if we consider a principal monomial ideal $K=(x^\alpha)$, then the expected equality for integrals $$x^{\alpha}\circ\int_{x^{\alpha}}M=M$$ holds. Indeed, for any $m\in M$, take $G=y^{\alpha}m$. Since $x^{\alpha}\circ y^{\alpha}=1$, then $x^\alpha\circ y^{\alpha}m=m$ and the equality is reached.

\begin{remark}
In general, $\int_{x^{\alpha}} x^{\alpha}\circ M\neq x^{\alpha}\circ\int_{x^{\alpha}} M$, hence the inclusion $M\subset\int_K K\circ M$ is not an equality even for principal monomial ideals. See \Cref{r1}.
\end{remark}

Let us now consider an even more particular case: the integral of a cyclic module $M=\langle F\rangle$ with respect to the variable $x_i$. Since the equality $x_i\circ \int_{x_i}M=M$ holds, there exists $G\in S$ such that $x_i\circ G=F$. This polynomial $G$ is not unique because it can have any constant term with respect to $x_i$, that is $G=y_iF+p(y_1,\dots,\hat{y}_i,\dots,y_n)$. However, if we restrict to the non-constant polynomial we can define the following:

\begin{definition}[$i$-primitive]
The $i$-primitive of a polynomial $f\in S$ is the polynomial $g\in S$, denoted by $\int_i f$, such that
\begin{enumerate}
\item[(i)] $x_i\circ g=f$,
\item[(ii)] $g\vert_{y_i=0}=0$.
\end{enumerate}
\end{definition}

In \cite{EM07}, Elkadi and Mourrain proposed a definition of $i$-primitive of a polynomial in a zero-characteristic setting using the derivation structure instead of contraction. Therefore, we can think of the integral of a module with respect to an ideal as a generalization of their $i$-primitive.

Since we are considering the $R$-module structure given by contraction, the $i$-primitive is precisely
$$\int_i f=y_if.$$

Indeed, $x_i\circ(y_if)=f$ and $(y_if)\mid_{y_i=0}=0$, hence (i) and (ii) hold.
Uniqueness can be easily proved. Consider $g_1,g_2$ to be $i$-primitives of $f$. Then $x_i\circ (g_1-g_2)=0$ and hence $g_1-g_2=p(y_1,\dots,\hat{y}_i,\dots,y_n)$. Clearly $(g_1-g_2)\vert_{y_i=0}=p(y_1,\dots,\hat{y}_i,\dots,y_n)$. On the other hand, $(g_1-g_2)\vert_{y_i=0}=g_1\vert_{y_i=0}-g_2\vert_{y_i=0}=0$. Hence $p=0$ and $g_1=g_2$.

\begin{remark}\label{r1} Note that, by definition, $x_k\circ \int_k f=f$. Any $f$ can be decomposed in $f=f_1+f_2$, where the first term is a multiple of $y_k$ and the second has no appearances of this variable. Then
$$\int_k x_k\circ f=\int_k x_k\circ f_1+\int_k x_k\circ f_2=f_1+\int_k 0=f_1.$$
\noindent
Therefore, in general, $$f_1=\int_k x_k\circ f\neq x_k\circ \int_k f=f.$$
However, for all $l\neq k$, $$\int_l x_k\circ f=\frac{y_lf_1}{y_k}=x_k\circ \int_l f.$$
\end{remark}

\bigskip

Let us now recall Theorem 7.36 of Elkadi-Mourrain in \cite{EM07}, which describes the elements of the inverse system $I^\perp$ up to a certain degree $d$. We define $\mathcal{D}_d=I^\perp\cap S_{\leq d}$, for any $1\leq d\leq s$, where $s=\socdeg(A)$. Since $\mathcal{D}_s=I^\perp$, this result leads to an algorithm proposed by the same author to obtain a $\res$-basis of an inverse system. We rewrite the theorem using the contraction setting instead of derivation.

\begin{theorem}[Elkadi-Mourrain]\label{EM}
Given an ideal $I=(f_1,\dots,f_m)$ and $d>1$. Let $\lbrace b_1,\dots,b_{t_{d-1}}\rbrace$ be a $\res$-basis of $\mathcal{D}_{d-1}$. The polynomials of $\mathcal{D}_d$ with no constant term, i.e. no terms of degree zero, are of the form
\begin{equation}\label{thm}
\Lambda=\sum_{j=1}^{t_{d-1}}\lambda_j^1\int_1 b_j\vert_{y_2=\cdots=y_n=0}+\sum_{j=1}^{t_{d-1}}\lambda_j^2\int_2 b_j\vert_{y_3=\cdots=y_n=0}+\dots+\sum_{j=1}^{t_{d-1}}\lambda_j^n\int_n b_j,\quad\lambda_j^k\in\res,
\end{equation}
such that
\begin{equation}\label{cond1}
\sum_{j=1}^{t_{d-1}}\lambda_j^k (x_l\circ b_j)-\sum_{j=1}^{t_{d-1}}\lambda_j^l(x_k\circ b_j)=0, 1\leq k<l\leq n,
\end{equation}
and
\begin{equation}\label{cond2}
\left(f_i\circ\Lambda\right)(0)=0, \mbox{ for } 1\leq i\leq m.
\end{equation}
\end{theorem}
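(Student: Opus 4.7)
The plan is to follow the standard pattern used for Poincar\'e-type integrability statements, adapted to the contraction setting: first analyze the first-order contractions $x_k\circ\Lambda$ of any candidate $\Lambda$, then reconstruct $\Lambda$ by integrating these ``partials'' triangularly.

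For the forward direction, suppose $\Lambda\in\mathcal{D}_d$ has no constant term. Because contraction satisfies $(fg)\circ\Lambda=f\circ(g\circ\Lambda)$, from $I\circ\Lambda=0$ I would deduce that $f_i\circ(x_k\circ\Lambda)=0$ for every $i,k$, and since clearly $\deg(x_k\circ\Lambda)\le d-1$, the element $x_k\circ\Lambda$ lies in $\mathcal{D}_{d-1}$. Thus one can write $x_k\circ\Lambda=\sum_j\lambda_j^k b_j$ for scalars $\lambda_j^k\in\res$. Commutativity $x_l\circ(x_k\circ\Lambda)=x_k\circ(x_l\circ\Lambda)$ then forces the identity (\ref{cond1}). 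For (\ref{cond2}), note that $x_k\circ(f_i\circ\Lambda)=f_i\circ(x_k\circ\Lambda)=\sum_j\lambda_j^k(f_i\circ b_j)=0$ since $b_j\in I^\perp$; hence $f_i\circ\Lambda$ is annihilated by every variable, so it is a constant, which the hypothesis $I\circ\Lambda=0$ forces to be $(f_i\circ\Lambda)(0)=0$.

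Next I would recover the explicit formula (\ref{thm}). Decompose $\Lambda=\Lambda_1+\cdots+\Lambda_n$, where $\Lambda_k$ collects the monomials whose largest-index variable is $y_k$ (so $\Lambda_k$ involves $y_k$ but none of $y_{k+1},\ldots,y_n$). A short check shows $(x_k\circ\Lambda)\vert_{y_{k+1}=\cdots=y_n=0}=x_k\circ\Lambda_k$, and since $\Lambda_k$ is $y_k$ times a polynomial in $y_1,\ldots,y_k$, the $k$-primitive inverts this: $\Lambda_k=\int_k\bigl[(x_k\circ\Lambda)\vert_{y_{k+1}=\cdots=y_n=0}\bigr]$. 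Substituting $x_k\circ\Lambda=\sum_j\lambda_j^k b_j$ and using linearity of $\int_k$ and of the restriction yields exactly (\ref{thm}).

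For the converse, given $\lambda_j^k$ satisfying (\ref{cond1}) and (\ref{cond2}), define $\Lambda$ by (\ref{thm}) and compute $x_k\circ\Lambda$ directly, splitting the sum according to whether $l<k$, $l=k$, or $l>k$. The terms with $l<k$ vanish (no $y_k$ appears), the term with $l=k$ contributes $\sum_j\lambda_j^k b_j\vert_{y_{k+1}=\cdots=y_n=0}$, and the terms with $l>k$ contribute $\sum_{l>k}y_l\sum_j\lambda_j^l(x_k\circ b_j)\vert_{y_{l+1}=\cdots=y_n=0}$. Using the homogeneous-in-index decomposition of each $b_j$, I would identify these last terms with the missing pieces $\sum_{l>k}y_l\sum_j\lambda_j^k(x_l\circ b_j)\vert_{y_{l+1}=\cdots=y_n=0}$ precisely by invoking the compatibility (\ref{cond1}) (restricted to the hyperplane, which is implied by the full identity). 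Summing gives $x_k\circ\Lambda=\sum_j\lambda_j^k b_j\in\mathcal{D}_{d-1}$, hence $f_i\circ\Lambda$ is a constant killed by every variable; (\ref{cond2}) then pins this constant to $0$, so $\Lambda\in I^\perp$ and clearly $\deg\Lambda\le d$.

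The main obstacle will be the bookkeeping in the converse step: keeping track of which monomials of each $b_j$ survive under the nested restrictions $\vert_{y_{l+1}=\cdots=y_n=0}$, and verifying that the ``cross terms'' produced by $x_k\circ\int_l b_j\vert_{y_{l+1}=\cdots=y_n=0}$ for $l>k$ are exactly the pieces of $\sum_j\lambda_j^k b_j$ that lie outside the hyperplane $y_{k+1}=\cdots=y_n=0$. This amounts to showing that compatibility (\ref{cond1}) is exactly the discrete Poincar\'e-lemma obstruction to integrating the prescribed first-order contractions, and it is the only place where (\ref{cond1}) is genuinely used.
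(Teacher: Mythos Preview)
Your proposal is correct. Note that the paper itself does not give a proof of this theorem, referring instead to \cite{Mou96} and \cite{EM07}; it does, however, reproduce an adaptation of that argument in the proof of \Cref{propint} (the version without condition (\ref{cond2})), and explicitly states that this follows the original closely. Comparing your argument to that one reveals a genuine, if minor, difference in the decomposition step: the paper writes $\Lambda=\sum_l\Lambda_l$ where $\Lambda_l$ collects monomials whose \emph{smallest}-index variable is $y_l$ (so $\Lambda_l\in\res[y_l,\dots,y_n]\setminus\res[y_{l+1},\dots,y_n]$), whereas you decompose by \emph{largest}-index variable. With the paper's choice the individual pieces $\Lambda_l$ do \emph{not} coincide with the summands in (\ref{thm}); one must instead track the partial sums $\sigma_k=\Lambda_1+\cdots+\Lambda_k$ and verify inductively that $\sigma_k$ has the truncated form of (\ref{thm}), with the restrictions appearing only after telescoping. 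Your decomposition is more economical: each $\Lambda_k$ equals the $k$-th summand $\sum_j\lambda_j^k\int_k b_j\vert_{y_{k+1}=\cdots=y_n=0}$ on the nose, so the forward direction becomes a one-line identification once you have checked $(x_k\circ\Lambda)\vert_{y_{k+1}=\cdots=y_n=0}=x_k\circ\Lambda_k$. In the converse both arguments compute $x_k\circ\Lambda$ term by term and invoke (\ref{cond1}) to collapse the cross-terms to $\sum_j\lambda_j^k b_j$; the bookkeeping is essentially dual. Your treatment of (\ref{cond2}), which is absent from \Cref{propint}, is correct and straightforward.
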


See \cite{Mou96} or \cite{EM07} for a proof.

\section{Using integrals to obtain Gorenstein covers of Artin rings}

Let us start by recalling the definitions of Gorenstein cover and Gorenstein colength of a local equicharacteristic Artin ring $A=R/I$ from \cite{EH18}:

\begin{definition}
We say that $G=R/J$, with $J=\ann F$, is a Gorenstein cover of $A$ if and only if $I^\perp\subset\langle F\rangle$.
The Gorenstein colength of $A$ is
  $$
  \gcl(A)=\min\{\ell(G)-\ell(A)\mid G \text{ is a Gorenstein cover of  } A\}.
  $$
$A$ Gorenstein cover $G$ of an Artin ring $A$ is minimal
  if $\ell(G)=\ell(A)+\gcl(A)$.
\end{definition}

For all $F\in S$ defining a Gorenstein cover of $A$
  we consider the colon ideal $K_F$ of $R$ defined by
  $$
  K_F=(I^{\perp} :_R \langle F\rangle).
  $$

In general, we do not know which are the ideals $K_F$ that provide a minimal Gorenstein cover of a given ring. However, for a given colength, we do know a lot about the form of the ideals $K_F$ associated to a polynomial $F$ that reaches this minimum. In the following proposition, we summarize the basic results regarding ideals $K_F$ from \cite{EH18}:

\begin{proposition}
\label{KF}
Let $A=R/I$ be a local Artin algebra and $G=R/J$, with $J=\ann F$, a minimal Gorenstein cover of $A$. Then,
  \begin{enumerate}
    \item[(i)] $I^{\perp}= K_F \circ F$,
    \item[(ii)] $\gcl(A)=\ell(R/K_F)$.
  \end{enumerate}
Moreover,
$$
K_F=
\left\{
          \begin{array}{ll}
            R, &    \hbox{if \quad  } \gcl(A)=0; \\
            \m, & \hbox{if \quad } \gcl(A)=1;\\
            (L_1,\dots,L_{n-1},L_n^2), & \hbox{if \quad } \gcl(A)=2,
          \end{array}
        \right.
$$
\noindent
where $L_1,\dots,L_n$ are suitable independent linear forms in $R$.
\end{proposition}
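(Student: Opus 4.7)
The plan is to establish (i) and (ii) directly from the definition of $K_F$, and then to deduce the moreover statement by classifying $K_F$ by length. First I would note that $\langle F\rangle=R\circ F$ as an $R$-submodule of $S$, so that for any $a\in R$ we have $a\circ\langle F\rangle=R\circ(a\circ F)=\langle a\circ F\rangle$; since $I^\perp$ is itself an $R$-submodule, the colon ideal collapses to
$$K_F=\{\,a\in R\mid a\circ F\in I^\perp\,\}.$$
The inclusion $K_F\circ F\subseteq I^\perp$ is then immediate from this reformulation. For the reverse inclusion I would invoke the cover hypothesis $I^\perp\subseteq\langle F\rangle$: every $h\in I^\perp$ can be written as $h=a\circ F$ for some $a\in R$, and the relation $a\circ F=h\in I^\perp$ forces $a\in K_F$, whence $h\in K_F\circ F$. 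Notice that minimality of $G$ plays no role here.

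For (ii), I would use the canonical $R$-module isomorphism $\varphi\colon R/\ann F\xrightarrow{\sim}\langle F\rangle$ sending $\bar a\mapsto a\circ F$. Since $\ann F\subseteq K_F$, the ideal $K_F/\ann F$ corresponds under $\varphi$ exactly to the submodule $K_F\circ F$, and combining with (i) we obtain
$$R/K_F\;\cong\;\langle F\rangle/(K_F\circ F)\;=\;\langle F\rangle/I^\perp.$$
Taking $\res$-dimensions yields $\ell(R/K_F)=\ell(G)-\ell(A)$, and the minimality hypothesis identifies this quantity with $\gcl(A)$.

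The moreover part then reduces to a classification of ideals $K\subseteq R$ with $\ell(R/K)\in\{0,1,2\}$. Length $0$ forces $K_F=R$ and length $1$ forces $K_F=\m$, since $R$ is local with residue field $\res$. For length $2$, $R/K_F$ is a local Artin ring of length $2$ over $\res$, so Nakayama gives $\m^2\subseteq K_F\subsetneq\m$ together with $\dim_\res\m/K_F=1$. I would then pick linear forms $L_1,\dots,L_{n-1}\in\m$ whose classes form a basis of $K_F/\m^2\subseteq\m/\m^2$, and extend by $L_n$ to a full basis of $\m/\m^2$; this already gives $K_F=(L_1,\dots,L_{n-1})+\m^2$. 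Since $\m^2$ is generated by the products $L_iL_j$ and every such product other than $L_n^2$ lies in $(L_1,\dots,L_{n-1})$, I conclude $K_F=(L_1,\dots,L_{n-1},L_n^2)$.

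The only genuinely subtle step in the whole argument is the collapse of the colon ideal in (i) to a cyclic condition on $F$; once that reformulation is in place, (ii) becomes a length bookkeeping via $\varphi$, and the moreover statement reduces to the standard classification of length-$2$ Artinian quotients of a regular local ring. No deeper input from \cite{EH18} seems needed for this particular statement.
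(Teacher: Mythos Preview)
The paper does not prove this proposition; it introduces it with the sentence ``we summarize the basic results regarding ideals $K_F$ from \cite{EH18}'' and gives no argument. Your proof is correct and self-contained. The reduction of the colon ideal to $K_F=\{a\in R\mid a\circ F\in I^\perp\}$ is exactly the point that makes (i) immediate, and your remark that (i) uses only the cover hypothesis (not minimality) is accurate; the paper itself exploits this generality later, e.g.\ in the proof of \Cref{F}, where it applies $K_H\circ H=I^\perp$ to an $H$ that is a priori only a cover. The length computation for (ii) via the isomorphism $R/\ann F\cong\langle F\rangle$ is the natural one, and your classification of length-$\le 2$ Artinian quotients of a regular local ring is the standard argument. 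So you have supplied, by a direct elementary route, what the paper defers to \cite{EH18}.
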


\begin{remark}
Note that whereas in the case of colength 1 the ideal $K_F$ does not depend on the particular choice of $F$, this is no longer true for higher colengths. For colength higher that 2, things get more complicated since the $K_F$ can even have different analytic type. The simplest example is colength 3, where we have 2 possible non-isomorphic $K_F$'s: $(L_1,\dots,L_{n-1},L_n^3)$ and $(L_1,\dots,L_{n-2},L_{n-1}^2,L_{n-1}L_n,L_n^2)$. Therefore, although it is certainly true that $F\in\int_{K_F} I^\perp$, it will not be useful as a condition to check if $A$ has a certain Gorenstein colength.
\end{remark}

The dependency of the integral on $F$ can be removed by imposing only the condition $F\in\int_{\m^t} I^\perp$, for a suitable integer $t$. Later on we will see how to use this condition to find a minimal cover, but we first need to dig deeper into the structure of the integral of a module with respect to a power of the maximal ideal. The following result permits an iterative approach:

\begin{lemma}\label{str} Let $M$ be a finitely generated sub-$R$-module of $S$ and $d\geq 1$, then
$$\int_{\m}\left(\int_{\m^{d-1}}M\right)=\int_{\m^d} M.$$
\end{lemma}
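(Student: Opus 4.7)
The plan is to prove the equality by a direct double inclusion argument, unwinding the definition of the integral and exploiting associativity of the contraction action, namely $(fg)\circ H = f\circ(g\circ H)$ for $f,g\in R$ and $H\in S$. Concretely, an element $H\in S$ lies in $\int_{\m}\bigl(\int_{\m^{d-1}}M\bigr)$ iff $\m\circ H \subset \int_{\m^{d-1}}M$, which in turn means that for every $g\in\m$ the element $g\circ H$ satisfies $\m^{d-1}\circ(g\circ H)\subset M$.

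Using associativity, $\m^{d-1}\circ(g\circ H) = (\m^{d-1}\,g)\circ H$, so the condition becomes $(fg)\circ H\in M$ for every $f\in\m^{d-1}$ and every $g\in\m$. Since the products $fg$ with $f\in\m^{d-1}$, $g\in\m$ generate $\m^d$ as an ideal, and since the contraction action is $R$-linear in the first argument, this is equivalent to $\m^d\circ H\subset M$, i.e. $H\in\int_{\m^d}M$. Both directions of the equivalence follow from the same chain of equalities, giving both inclusions simultaneously.

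The argument is essentially a formal manipulation, so I do not anticipate any real obstacle; the only point to be careful about is to justify the passage from $(fg)\circ H\in M$ for generators $f\in\m^{d-1},g\in\m$ to $\m^d\circ H\subset M$, which uses only that $\m^d=\m^{d-1}\cdot\m$ together with $R$-linearity of $\circ$ on the left. One could alternatively deduce the result from \Cref{integral}, by checking that $\bigl(\m\cdot(\m^{d-1}M^\perp)^\perp{}^\perp\bigr)^\perp = \bigl(\m^d M^\perp\bigr)^\perp$, but this route adds unnecessary dualizations and obscures the elementary nature of the statement, so I would stick with the direct approach above.
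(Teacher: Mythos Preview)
Your proposal is correct and follows essentially the same approach as the paper's proof: a direct double-inclusion argument using the identity $\m^d\circ H=\m^{d-1}\circ(\m\circ H)$ to pass between the two sides. The paper's version is slightly terser, working directly with the ideals rather than picking individual elements $f\in\m^{d-1}$, $g\in\m$, but the underlying reasoning is identical.
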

\begin{proof}
Let us prove first the inclusion $\int_{\m}\left(\int_{\m^{d-1}}M\right)\subseteq \int_{\m^d} M$. Take $\Lambda\in\int_{\m}\left(\int_{\m^{d-1}}M\right)$, then $\m\circ\Lambda\subseteq \int_{\m^{d-1}}M$ and hence $\m^d\circ\Lambda=\m^{d-1}\circ\left(\m\circ\Lambda\right)\subseteq M$. Therefore, $\Lambda\in\int_{\m^d} M$.
To prove the reverse inclusion, consider $\Lambda\in\int_{\m^d} M$, that is, $\m^{d-1}\circ\left(\m\circ\Lambda\right)=\m^d\circ\Lambda\subseteq M$. In other words, $\m\circ\Lambda\subseteq\int_{\m^{d-1}}M$ and $\Lambda\in\int_{\m}\left(\int_{\m^{d-1}}M\right)$.
\end{proof}

Since $\int_{\m^t}M$ is a finitely dimensional $\res$-vector space that can be obtained by integrating $t$ times $M$ with respect to $\m$, we can also consider a basis of $\int_{\m^t}M$ which is built by extending the previous basis at each step.
\begin{definition}\label{adapted}
Let $M$ be a finitely generated sub-$R$-module of $S$.
Given an integer $t$, we denote by $h_i$ the dimension of the $\res$-vector space $\int_{\m^i}M/\int_{\m^{i-1}}M$, $i=1,\cdots,t$.
An adapted  $\res$-basis of $\int_{\m^t}M/M$ is a $\res$-basis $\overline{F}_j^i$, $i=1,\cdots, t$, $j=1,\cdots,h_i$, of
$\int_{\m^t}M/M$ such that
$F_1^i,\cdots, F_{h_i}^i\in \int_{\m^i}M$ and their cosets in $\int_{\m^i}M/\int_{\m^{i-1}}M$
form a $\res$-basis, $i=1\cdots,t$.

\noindent
Let $A=R/I$ be an Artin ring, we denote by $\mathcal{L}_{A,t}$ the $R$-module $\int_{\m^t}I^\perp/I^\perp$.
\end{definition}

The following proposition is meant to overcome the obstacle of non-uniqueness of the ideals $K_F$:

\begin{proposition}\label{F} Given a ring $A=R/I$ of Gorenstein colength $t$ and a minimal Gorenstein cover $G=R/\ann F$ of $A$,
\begin{enumerate}
\item[(i)] $F\in\int_{\m^t}I^\perp$;
\item[(ii)] for any $H\in\int_{\m^t}I^\perp$, the condition $I^\perp\subset\langle H\rangle$ does not depend on the representative of the class $\overline{H}$ in $\mathcal{L}_{A,t}$.
\end{enumerate}
In particular, any $F'\in \int_{\m^t}I^\perp$ such that $\overline{F'}=\overline{F}$ in $\mathcal{L}_{A,t}$ defines the same minimal Gorenstein cover $G=R/\ann F$.
\end{proposition}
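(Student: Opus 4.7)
The plan for (i) is to invoke \Cref{KF}: since $F$ defines a minimal Gorenstein cover, $I^\perp = K_F \circ F$ and $\ell(R/K_F) = \gcl(A) = t$. The key extra ingredient is the inclusion $\m^t \subseteq K_F$. This follows from a standard length argument applied to the local Artin ring $R/K_F$: the chain $R/K_F \supsetneq \m/K_F \supsetneq (\m/K_F)^2 \supsetneq \cdots$ can exhibit at most $t$ strict inclusions before stabilising at $0$, so $(\m/K_F)^t = 0$, i.e. $\m^t \subseteq K_F$. Consequently $\m^t \circ F \subseteq K_F \circ F = I^\perp$, which is precisely the statement $F \in \int_{\m^t} I^\perp$.

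For (ii), I fix $H \in \int_{\m^t} I^\perp$ and $G \in I^\perp$ and compare the cyclic submodules $\langle H \rangle$ and $\langle H + G \rangle$. The plan is actually to prove the stronger statement that these two modules coincide whenever one of them contains $I^\perp$, from which the claimed equivalence is immediate by interchanging $H$ and $H+G$. Assuming $I^\perp \subseteq \langle H \rangle = R \circ H$, the element $G \in I^\perp$ can be written as $G = r \circ H$ for some $r \in R$. Writing $r = c + r'$ with $c \in \res$ and $r' \in \m$, the critical step is to rule out $c \neq 0$: if $c \neq 0$ then $r$ is a unit of the local ring $R$, so by the $R$-module axioms $H = r^{-1} \circ (r \circ H) = r^{-1} \circ G \in I^\perp$; this forces $\langle H \rangle \subseteq I^\perp$, hence $\langle H \rangle = I^\perp$, and $\gcl(A) = 0$, contradicting $t \geq 1$. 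Therefore $r \in \m$, which makes $1 + r$ a unit of $R$, and $\langle H + G \rangle = R \circ (1 + r) \circ H = R \circ H = \langle H \rangle$.

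The concluding assertion is then immediate: if $\overline{F'} = \overline{F}$ in $\mathcal{L}_{A,t}$ then $F' = F + G$ with $G \in I^\perp$, and the argument above actually gives $\langle F' \rangle = \langle F \rangle$; by Macaulay duality this yields $\ann F' = \ann F$ and so the same minimal cover. The main obstacle I anticipate is a clean formulation of the unit-inverse step $r^{-1} \circ (r \circ H) = H$, and in particular the careful use of the fact that units of $R$ act on $S$ as $R$-module automorphisms under contraction; everything else reduces to elementary manipulations with $K_F$, the length formula $\ell(R/K_F) = t$, and the definition of $\int_{\m^t} I^\perp$.
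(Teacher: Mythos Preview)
Your argument is correct. Part (i) is essentially identical to the paper's proof: both invoke \Cref{KF} to get $K_F\circ F=I^\perp$ and $\ell(R/K_F)=t$, then deduce $\m^t\subseteq K_F$ (the paper phrases this via $\socdeg(R/K_F)\le t-1$, you via the descending chain of powers of $\m/K_F$, but these are the same observation), and conclude $F\in\int_{K_F}I^\perp\subseteq\int_{\m^t}I^\perp$.

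Part (ii), however, follows a genuinely different and more elementary route. The paper assumes $I^\perp\subset\langle H\rangle$, invokes \cite[Proposition 3.8]{EH18} to obtain $K_H\circ H=I^\perp$ with $K_H\subset\m$, and then uses Nakayama's lemma to show $K_H\circ H'=I^\perp$ for $H'=H+G$; only after this does it deduce $\langle H\rangle=\langle H'\rangle$ from the two inclusions $I^\perp\subset\langle H\rangle$ and $I^\perp\subset\langle H'\rangle$. Your approach bypasses both the external input $K_H\circ H=I^\perp$ and Nakayama entirely: writing $G=r\circ H$ with $r\in R$, you rule out $r$ being a unit (else $H\in I^\perp$ and $t=0$) and conclude directly that $H+G=(1+r)\circ H$ with $1+r$ a unit, so $\langle H+G\rangle=\langle H\rangle$. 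This is shorter, self-contained, and yields the equality of cyclic modules in one stroke rather than as a corollary. The only cosmetic point is that you should state explicitly that the case $t=0$ is vacuous (since then $\mathcal{L}_{A,0}=0$), so that invoking ``contradicting $t\ge1$'' is legitimate; the paper's proof has the same implicit assumption when it writes $h\in K_H\subset\m$.
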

\begin{proof}
\noindent
(i) By \cite[Proposition 3.8]{EH18}, we have $\gcl(A)=\ell(R/K_F)$, where $K_F\circ F=I^\perp$ for any polynomial $F$ that generates a minimal Gorenstein cover $G=R/\ann F$ of $A$. From the definition of integral we have $F\in\int_{K_F} I^\perp$. Since $\ell(R/K_F)=t$, then $\socdeg(R/K_F)\leq t-1$. Indeed, the extremal case corresponds to the most expanded Hilbert function $\lbrace 1,1,\dots,1\rbrace$, that is, a stretched algebra (see \cite{Sal79c},\cite{EV08}). Then $\HF_{R/K_F}(i)=0$, for any $i\geq t$, regardless of the particular form of $K_F$, and hence $\m^t\subset K_F$. Therefore,
$$F\in\int_{K_F} I^\perp\subset\int_{\m^t} I^\perp.$$
\noindent
(ii) Consider a polynomial $H\in\int_{\m^t}I^\perp$ such that $I^\perp\subset\langle H\rangle$. By \cite[Proposition 3.8]{EH18}, $K_H\circ H=I^\perp$.
Consider $H'\in\int_{\m^t}I^\perp$ such that $\overline{H}=\overline{H'}$ in $\mathcal{L}_{A,t}$, so $H=H'+G$ for some $G\in I^\perp$. We want to prove that
\begin{equation}\label{nak}
K_H\circ H'+\m\circ I^\perp=K_H\circ H+\m\circ I^\perp=I^\perp.
\end{equation}
\noindent
The second equality is direct from $K_H\circ H=I^\perp$. Let us check the first.
Take $h\circ H'+\m\circ I^\perp\in K_H\circ H'+\m\circ I^\perp$, with $h\in K_H \subset \m$,
$$h\circ H'+\m\circ I^\perp=h\circ H-h\circ G+\m\circ I^\perp=h\circ H+\m\circ I^\perp\subset  K_H\circ H+\m\circ I^\perp.$$
\noindent
The same argument holds for the reverse inclusion. Therefore, \Cref{nak} holds and we can apply Nakayama's lemma to get $K_H\circ H'=I^\perp$. Hence $I^\perp\subset\langle H'\rangle$.
In particular, $\langle H'\rangle=\langle H\rangle$. Indeed, since $H'=H-G$ and $\langle G\rangle\subset \langle I^\perp\rangle\subset \langle H\rangle$, then $H'\in \langle H\rangle + \langle G\rangle=\langle H\rangle$ and a similar argument gives $H\in\langle H'\rangle$.
\end{proof}

Observe that the proposition says that, although not all $F\in\int_{\m^t} I^\perp$ correspond to covers $G=R/\ann F$ of $A=R/I$, if $F$ is actually a cover, then any $F'\in\int_{\m^t} I^\perp$ such that
$\overline{F'}=\overline{F}\in  \mathcal{L}_{A,t}$ provides the exact same cover. That is, $\langle F'\rangle=\langle F\rangle$.

\begin{corollary}\label{cor}
Let $A=R/I$ be an Artin ring  of Gorenstein colength $t$ and let $\lbrace\overline{F}_j^i\rbrace_{1\leq i\leq t,1\leq j\leq h_i}$ be an adapted $\res$-basis of $\mathcal{L}_{A,t}$.
Given a minimal Gorenstein cover $G=R/J$ there is  a generator $F$ of $J^\perp$ such that
$F$ can be written as
$$F=a_1^1 F_1^1+\dots+a_{h_1}^1F_{h_1}^1+\dots+a_1^t F_1^t+\dots+a_{h_t}^t F_{h_t}^t\in\int_{\m^t}I^\perp,\,a_i^j\in\res.$$
\end{corollary}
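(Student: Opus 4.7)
The plan is to deduce the corollary as an essentially immediate application of \Cref{F}. First, I pick any generator $F_0$ of the cyclic inverse system $J^\perp$, so that $G=R/\ann F_0=R/J$ is the given minimal Gorenstein cover of $A$. By \Cref{F}(i), $F_0\in\int_{\m^t}I^\perp$, and hence $F_0$ has a well-defined class $\overline{F}_0$ in $\mathcal{L}_{A,t}=\int_{\m^t}I^\perp/I^\perp$.

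Next, I expand this class in the adapted basis. Since $\{\overline{F}_j^i\}_{1\leq i\leq t,\,1\leq j\leq h_i}$ is a $\res$-basis of $\mathcal{L}_{A,t}$ by \Cref{adapted}, there exist unique scalars $a_j^i\in\res$ and some $H\in I^\perp$ such that
\[
F_0=\sum_{i=1}^{t}\sum_{j=1}^{h_i}a_j^i\,F_j^i+H.
\]
Setting $F:=\sum_{i,j}a_j^i F_j^i=F_0-H$ yields an element of $\int_{\m^t}I^\perp$ whose class satisfies $\overline{F}=\overline{F}_0$ in $\mathcal{L}_{A,t}$.

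Finally, I invoke \Cref{F}(ii): since $F_0$ defines a minimal Gorenstein cover of $A$ and $F$ shares its class in $\mathcal{L}_{A,t}$, the last assertion of that proposition gives $\langle F\rangle=\langle F_0\rangle=J^\perp$. Hence $F$ is a generator of $J^\perp$ of the required shape.

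I do not anticipate any genuine obstacle here, since the substantive content — that any generator of $J^\perp$ lies in $\int_{\m^t}I^\perp$ and that the cover it produces depends only on its class modulo $I^\perp$ — has already been packaged into \Cref{F}. The corollary only amounts to writing the distinguished class $\overline{F}_0$ in coordinates with respect to the chosen adapted basis and transporting the resulting equality back to representatives, which is handled by a single application of part (ii).
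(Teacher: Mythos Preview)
Your proof is correct and follows essentially the same approach as the paper: pick a generator of $J^\perp$, use \Cref{F}(i) to place it in $\int_{\m^t}I^\perp$, expand its class in the adapted basis of $\mathcal{L}_{A,t}$, and then use \Cref{F}(ii) to replace it by the representative with zero $I^\perp$-component. The paper's version is terser (it does not rename the generator and leaves the appeal to part (i) implicit), but the argument is the same.
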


\begin{proof}
In $\mathcal{L}_{A,t}$ we have $\overline{F}=\sum_{i=1}^t\sum_{j=1}^{h_j}a_j^i\overline{F_j^i}$
and hence $F=\sum_{i=1}^t\sum_{j=1}^{h_i}a_j^iF_j^i+G$ with $G\in I^\perp.$
By \Cref{F}, any representative of the class $\overline{F}$ provides the same Gorenstein cover. In particular, we can take $G=0$ and we are done.
\end{proof}

Our goal now is to compute the integrals of the inverse system with respect to powers of the maximal ideal. Rephrasing it in a more general manner: we want an effective computation of $\int_{\m^k} M$, where $M\subset S$ is a sub-$R$-module of $S$ and $k\geq 1$.

Recall that, via Macaulay's duality, we have $I^\perp=M$, where $I=\ann M$ is an ideal in $R$. Therefore, the most natural approach is to integrate $M$ in a similar way as $I$ is integrated in \Cref{EM} by Elkadi-Mourrain but removing the condition of orthogonality with respect to the generators of the ideal $I$ (\Cref{cond2} of \Cref{EM}). Without this restriction we will be allowed to go beyond the inverse system $I^\perp=M$ and up to the integral of $M$ with respect to $\m$. The proof we present is very similar to the proof of Theorem 7.36 in \cite{Mou96} but we reproduce it below for the sake of completeness and to show the use of the contraction structure.

\begin{theorem}\label{propint}
Consider a sub-$R$-module $M$ of $S$ and let $\lbrace b_1,\dots,b_s\rbrace$ be a $\res$-basis of $M$. Let $\Lambda\in S$ be a polynomial with no constant terms. Then $\Lambda\in\int_{\m} M$ if and only if
\begin{equation}\label{prop}
\Lambda=\sum_{j=1}^s\lambda_j^1\int_1 b_j\vert_{y_2=\cdots=y_n=0}+\sum_{j=1}^s\lambda_j^2\int_2 b_j\vert_{y_3=\cdots=y_n=0}+\dots+\sum_{j=1}^s\lambda_j^n\int_n b_j,\quad\lambda_j^k\in\res,
\end{equation}
such that
\begin{equation}\label{cond1}
\sum_{j=1}^s\lambda_j^k (x_l\circ b_j)-\sum_{j=1}^s\lambda_j^l(x_k\circ b_j)=0, 1\leq k<l\leq n.
\end{equation}
\end{theorem}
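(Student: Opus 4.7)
The plan is to adapt Elkadi-Mourrain's integration theorem (\Cref{EM}) to the module setting: since we integrate the module $M$ instead of an ideal $I$, we should recover the same canonical shape \Cref{prop} together with the commutation constraints \Cref{cond1}, but drop the orthogonality relations that encode membership in $I^\perp$. The backbone is the unique ``leading-variable decomposition'' of any polynomial $\Lambda\in S$ with no constant term:
\[
\Lambda \;=\; \sum_{l=1}^n y_l\,Q_l(y_1,\ldots,y_l),\qquad Q_l\in\res[y_1,\ldots,y_l],
\]
obtained by grouping monomials by the largest index of a variable that appears. Applied to each basis vector $b_j$, it produces the reconstruction identity $b_j=b_j(0)+\sum_{l=1}^{n}y_l\,(x_l\circ b_j)\vert_{y_{l+1}=\cdots=y_n=0}$, which will be the pivotal algebraic fact in the sufficiency proof.

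For necessity ($\Rightarrow$), suppose $\m\circ\Lambda\subset M$. Then each $x_k\circ\Lambda$ lies in $M$, so there are unique scalars $\lambda_j^k\in\res$ with $x_k\circ\Lambda=\sum_{j=1}^s\lambda_j^k b_j$. Setting $y_{k+1}=\cdots=y_n=0$ isolates the $k$-th component of the leading-variable decomposition of $\Lambda$, yielding $Q_k=\sum_j \lambda_j^k\,b_j\vert_{y_{k+1}=\cdots=y_n=0}$. Multiplying by $y_k$ and summing over $k$ reproduces exactly the presentation \Cref{prop}. The commutativity of contraction, $x_l\circ x_k\circ\Lambda=x_k\circ x_l\circ\Lambda$, applied to the two basis expansions then gives the compatibility \Cref{cond1}.

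For sufficiency ($\Leftarrow$), begin with $\Lambda$ in the form \Cref{prop} satisfying \Cref{cond1}. Using that $x_k\circ(y_l P)$ vanishes if $k>l$, equals $P$ if $k=l$, and equals $y_l(x_k\circ P)$ if $k<l$, a direct computation yields
\[
x_k\circ\Lambda \;=\; \sum_{j}\lambda_j^k\,b_j\vert_{y_{k+1}=\cdots=y_n=0}\;+\;\sum_{l>k}y_l\sum_j\lambda_j^l\,(x_k\circ b_j)\vert_{y_{l+1}=\cdots=y_n=0}.
\]
The main obstacle is to recognize this as $\sum_j\lambda_j^k b_j\in M$. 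To do so, apply \Cref{cond1} inside the double sum to swap $\lambda_j^l(x_k\circ b_j)$ for $\lambda_j^k(x_l\circ b_j)$, factor $\lambda_j^k$ out, and invoke the reconstruction identity above: the bracket telescopes to $b_j$ itself (the constant $b_j(0)$ and the $l\le k$ pieces being already absorbed in $b_j\vert_{y_{k+1}=\cdots=y_n=0}$). Hence $x_k\circ\Lambda=\sum_j\lambda_j^k b_j\in M$ for every $k$, so $\m\circ\Lambda\subset M$ and the proof concludes.
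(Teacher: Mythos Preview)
Your proof is correct and follows the same two-step architecture as the paper: for necessity, decompose $\Lambda$ uniquely, read off the coefficients $\lambda_j^k$ from $x_k\circ\Lambda\in M$, and obtain \eqref{cond1} from the commutativity $x_lx_k\circ\Lambda=x_kx_l\circ\Lambda$; for sufficiency, contract \eqref{prop} by $x_k$, swap via \eqref{cond1}, and collapse using a reconstruction identity for each $b_j$. The sufficiency arguments are essentially identical; the paper states the final identity (your ``reconstruction identity'') as ``It can be proved that the expression in the parenthesis is exactly $b_j$'', while you spell it out.

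The one noteworthy difference is in the necessity direction: the paper decomposes $\Lambda=\sum_l\Lambda_l$ by the \emph{smallest} variable index appearing (so $\Lambda_l\in\res[y_l,\dots,y_n]\setminus\res[y_{l+1},\dots,y_n]$) and then builds up the partial sums $\sigma_k=\sum_{l\le k}\Lambda_l$ inductively via the recursion $\Lambda_l=\sum_j\lambda_j^l\int_l b_j-(\sigma_{l-1}-\sigma_{l-1}|_{y_l=0})$ until $\sigma_n$ matches \eqref{prop}. You instead decompose by the \emph{largest} variable index, $\Lambda=\sum_l y_lQ_l$ with $Q_l\in\res[y_1,\dots,y_l]$, which is already the shape of \eqref{prop}; the identification $Q_k=(x_k\circ\Lambda)|_{y_{k+1}=\cdots=y_n=0}$ is then a one-line observation rather than an induction. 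This buys you a shorter and more transparent necessity argument, at the cost of departing slightly from Elkadi--Mourrain's original bookkeeping. Either decomposition works; yours is the more direct route to the stated formula.
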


\begin{proof}
Consider a polynomial $\Lambda$ in $\int_\m M$ with no constant term. Observe that we have a unique decomposition $\Lambda=\sum_{l=1}^n\Lambda_l$ such that $\Lambda_l$ is a polynomial in $\res[y_l,\dots,y_n]\backslash\res[y_{l+1},\dots,y_n]$. By definition, $x_1\circ\Lambda_1=x_1\circ\Lambda$ is in $M$, hence $x_1\circ\Lambda_1=\sum_{j=1}^s\lambda_j^1b_j$ for some unique scalars $\lambda_j^1$ in $\res$. Note that each $\Lambda_l$ is a multiple of $y_l$. By \Cref{r1},

$$\Lambda_1=\int_1 x_1\circ\Lambda_1=\sum_{j=1}^s\lambda_j^1\int_1b_j.$$

Again, $x_2\circ\Lambda=x_2\circ\Lambda_1+x_2\circ\Lambda_2$ is in $M$, hence there exist unique scalars $\lambda_j^2$ in $\res$ such that $x_2\circ\Lambda=\sum_{j=1}^s\lambda_j^2b_j$. It can be checked that $\int_2x_2\circ\Lambda_1=\Lambda_1-\Lambda_1\mid_{y_2=0}$. Then
$$\Lambda_2=\int_2 x_2\circ\Lambda_2=\int_2x_2\circ\Lambda-\int_2x_2\circ\Lambda_1=\sum_{j=1}^{s}\lambda_j^2\int_2b_j-\left(\Lambda_1-\Lambda_1\vert_{y_2=0}\right).$$


Similarly, for any $1\leq l\leq n$, we can obtain
\begin{equation}\label{lambda1}
\Lambda_l=\sum_{j=1}^s\lambda_j^l\int_l b_j-\left(\sigma_{l-1}-\sigma_{l-1}\mid_{y_l=0}\right),
\end{equation}
\noindent
where
\begin{equation}\label{sigma1}
\sigma_k=\sum_{l=1}^k\Lambda_l=\sum_{j=1}^s\lambda_j^1\int_1b_j\mid_{y_2=\dots=y_k=0}+\sum_{j=1}^s\lambda_j^2\int_2b_j\mid_{y_3=\dots=y_k=0}+\dots+\sum_{j=1}^s\lambda_j^k\int_kb_j,
\end{equation}
\noindent
for any $1\leq k\leq n$ and $\sigma_0=0$.

Since $\Lambda=\sigma_n$, we get (\ref{prop}). We want to prove now that (\ref{cond1}) holds. Since $\Lambda_l\in\res[y_l,\dots,y_n]$, then $x_k\circ\Lambda_l=0$ for $1\leq k<l\leq n$. Hence contracting (\ref{lambda1}) first by $x_k$ and then by $x_l$ we get
\begin{equation}\label{sigma3}
\sum_{j=1}^s\lambda_j^l(x_k\circ b_j)=x_l\circ\left(x_k\circ\sigma_{l-1}\right).
\end{equation}
\noindent
On one hand, for $k<l$, $x_k\circ\sigma_{l-1}=x_k\circ(\sum_{i=1}^k\Lambda_i)=x_k\circ\sigma_k$. On the other hand, when contracting (\ref{sigma1}) by $x_k$, the first $k-1$ terms vanish:
$$x_k\circ\sigma_k=\sum_{j=1}^s\lambda_j^k\left(x_k\circ\int_k b_j\right)=\sum_{j=1}^s\lambda_j^k b_j.$$
\noindent
Therefore, we can rewrite (\ref{sigma3}) as $\sum_{j=1}^s\lambda_j^l(x_k\circ b_j)=\sum_{j=1}^s\lambda_j^k(x_l\circ b_j),$ hence (\ref{cond1}) is satisfied.

Conversely, we want to know if every element of the form (\ref{prop}) satisfying (\ref{cond1}) is in $\int_{\m}M$. It is enough to prove that $x_k\circ\Lambda\in M$ for any $1\leq k\leq n$. Let us then contract (\ref{prop}) by $x_k$ for any $1\leq k\leq n$:
$$x_k\circ\Lambda=\sum_{j=1}^s\lambda_j^kb_j\mid_{y_{k+1}=\dots=y_n=0}+\sum_{j=1}^s\lambda_j^{k+1}\int_{k+1} x_k\circ b_j\mid_{y_{k+2}=\dots=y_n=0}+\dots+\sum_{j=1}^s\lambda_j^n\int_n x_k\circ b_j.$$
\noindent
The $l$-primitive of (\ref{cond1}), for any $k<l\leq n$, gives
$$\sum_{j=1}^s\lambda_j^k \int_l x_l\circ b_j=\sum_{j=1}^s\lambda_j^l\int_l x_k\circ b_j.$$
Hence
$$x_k\circ\Lambda=\sum_{j=1}^s\lambda_j^k\left(b_j\mid_{y_{k+1}=\dots=y_n=0}+\int_{k+1} x_{k+1}\circ b_j\mid_{y_{k+2}=\dots=y_n=0}+\dots+\int_n x_n\circ b_j\right).$$
\noindent
It can be proved that the expression in the parenthesis is exactly $b_j$ for any $1\leq j\leq n$, hence $x_k\circ\Lambda=\sum_{j=1}^s\lambda_j^kb_j$ and we are done.
\end{proof}

\noindent
From the previous theorem and \Cref{str} the next corollary follows directly.

\begin{corollary}\label{thmint}
Consider a sub-$R$-module $M$ of $S$ and $d\geq 1$. Let $\lbrace b_1,\dots,b_{t_{d-1}}\rbrace$ be a $\res$-basis of $\int_{\m^{d-1}} M$ and let $\Lambda$ be a polynomial with no constant terms. Then $\Lambda\in\int_{\m^d} M$ if and only if it is of the form
\begin{equation}\label{thm2}
\Lambda=\sum_{j=1}^{t_{d-1}}\lambda_j^1\int_1 b_j\vert_{y_2=\cdots=y_n=0}+\sum_{j=1}^{t_{d-1}}\lambda_j^2\int_2 b_j\vert_{y_3=\cdots=y_n=0}+\dots+\sum_{j=1}^{t_{d-1}}\lambda_j^n\int_n b_j,\quad\lambda_j^k\in\res,
\end{equation}
such that
\begin{equation}\label{cond}
\sum_{j=1}^{t_{d-1}}\lambda_j^k (x_l\circ b_j)-\sum_{j=1}^{t_{d-1}}\lambda_j^l(x_k\circ b_j)=0,\quad 1\leq k<l\leq n.
\end{equation}
\end{corollary}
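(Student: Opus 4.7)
The plan is to deduce this corollary as an essentially immediate consequence of \Cref{str} and \Cref{propint}, so the work is just assembling them correctly rather than producing a new argument.

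First, I would invoke \Cref{str} with the module $M$ and exponent $d$ to rewrite
$$\int_{\m^d} M \;=\; \int_{\m}\!\left(\int_{\m^{d-1}} M\right).$$
This reduces the problem to characterizing when $\Lambda$ lies in the single integral with respect to $\m$ of the module $N := \int_{\m^{d-1}} M$. Since $N$ is itself a finitely generated sub-$R$-module of $S$ (being a Macaulay dual by \Cref{integral}), the hypothesis that $\{b_1,\ldots,b_{t_{d-1}}\}$ is a $\res$-basis of $N$ puts us exactly in the setup of \Cref{propint}.

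Next, I would apply \Cref{propint} to $N$ with this chosen basis. The theorem states that a polynomial $\Lambda$ with no constant term belongs to $\int_\m N$ if and only if $\Lambda$ admits a decomposition of the form
$$\Lambda \;=\; \sum_{k=1}^{n}\sum_{j=1}^{t_{d-1}} \lambda_j^k \int_k b_j\bigr|_{y_{k+1}=\cdots=y_n=0}, \qquad \lambda_j^k\in\res,$$
subject to the compatibility relations
$$\sum_{j=1}^{t_{d-1}}\lambda_j^k (x_l\circ b_j)\;-\;\sum_{j=1}^{t_{d-1}}\lambda_j^l(x_k\circ b_j)\;=\;0,\qquad 1\le k<l\le n.$$
These are precisely the expressions (\ref{thm2}) and (\ref{cond}) in the statement of the corollary, so combining this equivalence with the identification $\int_\m N = \int_{\m^d} M$ yields the claim in both directions.

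There is no real obstacle here: the only thing one must check is that \Cref{propint} genuinely applies to $N$, which requires $N$ to be a finitely generated sub-$R$-module with a chosen $\res$-basis. Finite generation of $N$ follows from \Cref{integral} (since $N^\perp = \m^{d-1} M^\perp$ is an ideal of $R$, $N$ is Macaulay-dual to it and hence finitely generated as an $R$-module), and the basis is given by hypothesis. Everything else is a direct substitution, so the corollary follows formally.
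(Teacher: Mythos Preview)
Your proposal is correct and matches the paper's own justification exactly: the paper simply states that the corollary follows directly from \Cref{propint} applied to $N=\int_{\m^{d-1}}M$ together with \Cref{str}. Your extra remark on finite generation of $N$ is a harmless elaboration, though strictly speaking it is not needed since the hypothesis already hands you a finite $\res$-basis of $N$, which is all \Cref{propint} requires.
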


\begin{remark} Note that, using the notations of \Cref{EM}, it can be proved that
$$\mathcal{D}_d=I^\perp\cap\int_{\m}\mathcal{D}_{d-1},$$

\noindent
for any $1<d\leq s$. Indeed, \Cref{EM} says that any element $\Lambda\in\mathcal{D}_d$ is of the form of \Cref{thm2}, and because of \Cref{thmint}, we know that it satisfies \Cref{cond}. Hence, by \Cref{propint}, $\Lambda\in\int_{\m}\mathcal{D}_{d-1}$. Since $\Lambda\in\mathcal{D}_d=I^\perp\cap S_{\leq d}$, then $\Lambda\in I^\perp\cap\int_{\m}\mathcal{D}_{d-1}$.
Conversely, any element $\Lambda$ in $\left(\int_{\m}\mathcal{D}_{d-1}\right)\cap I^\perp$ satisfies, in particular, $\m\circ\Lambda\subseteq \mathcal{D}_{d-1}=I^\perp\cap S_{\leq d-1}$. Therefore $\deg\left(\m\circ\Lambda\right)\leq d-1$ and hence $\deg\Lambda\leq d$. Since $\Lambda\in I^\perp$, then $\Lambda\in I^\perp\cap S_{\leq d}=\mathcal{D}_d$.
\end{remark}

\bigskip
We end this section by considering the low Gorenstein colength cases.

\subsection{Teter rings}

Let us remind that Teter rings are those $A=R/I$ such that $A\cong G/\soc(G)$ for some Artin Gorenstein ring $G$. In \cite{ES17}, the authors prove that $\gcl(A)=1$ whenever $\embd(A)\geq 2$. They are a special case to deal with because the $K_F$ associated to any generator $F\in S$ of a minimal cover is always the maximal ideal. We provide some additional criteria to characterize such rings:

\begin{proposition}\label{propTeter} Let $A=R/I$ be a non-Gorenstein local Artin ring of socle degree $s\geq 1$ and let $\lbrace\overline{F}_j\rbrace_{1\leq j\leq h}$ be an adapted $\res$-basis of $\mathcal{L}_{A,1}$. Then $\gcl(A)=1$ if and only if there exist a polynomial $F=\sum_{j=1}^h a_jF_j\in\int_{\m}I^\perp$, $a_j\in\res$, such that $\dim_\res(\m\circ F)=\dim_\res I^\perp$.
\end{proposition}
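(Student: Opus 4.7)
The plan is to prove both implications of this characterization. The forward direction is essentially a repackaging of \Cref{KF} and \Cref{cor}, while the backward direction requires a short length calculation based on the dimension hypothesis.

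For the implication ($\Rightarrow$), assume $\gcl(A)=1$ and let $G=R/\ann F$ be a minimal Gorenstein cover of $A$. By \Cref{KF} we have $K_F=\m$, hence $\m\circ F=K_F\circ F=I^\perp$; in particular $F\in\int_\m I^\perp$. \Cref{cor} then lets us write $F=\sum_{j=1}^h a_jF_j$ in the adapted basis, and the equality $\m\circ F=I^\perp$ immediately yields $\dim_\res(\m\circ F)=\dim_\res I^\perp$.

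For the implication ($\Leftarrow$), suppose such an $F$ exists. Since $F\in\int_\m I^\perp$, the inclusion $\m\circ F\subseteq I^\perp$ holds by definition of the integral, and the dimension hypothesis forces the equality $\m\circ F=I^\perp$. Consequently $I^\perp=\m\circ F\subseteq R\circ F=\langle F\rangle$, so $G=R/\ann F$ is a Gorenstein cover of $A$. Using the decomposition $R=\res\oplus\m$ we obtain $\langle F\rangle=\res F+\m\circ F=\res F+I^\perp$. Since contraction by any element of $\m$ strictly lowers degree, $F\notin\m\circ F$, so this sum is direct and $\ell(G)=\dim_\res\langle F\rangle=1+\dim_\res I^\perp=\ell(A)+1$. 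Thus $\gcl(A)\leq 1$, and the non-Gorenstein hypothesis forces $\gcl(A)=1$.

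The only potentially delicate point is ensuring $F\neq 0$ in the backward direction, but this is automatic: if all $a_j$ vanished then $\dim_\res(\m\circ F)=0$, contradicting $\dim_\res I^\perp\geq 2$ (which holds since $s\geq 1$ means $A$ strictly contains $\res$). Nothing else in the argument goes beyond the module structure of $\langle F\rangle$ under contraction and the characterizations of $K_F$ collected earlier.
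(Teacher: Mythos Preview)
Your proof is correct and follows essentially the same approach as the paper's. The paper's argument is terser: for the backward direction it just writes ``$0<\gcl(A)\leq\ell(R/\m)=1$'' after establishing $\m\circ F=I^\perp$, implicitly invoking the bound $\gcl(A)\leq\ell(R/K_F)$ from \cite{EH18}, whereas you compute $\ell(\langle F\rangle)=\ell(A)+1$ directly via the decomposition $\langle F\rangle=\res F\oplus I^\perp$; but this is the same idea made explicit.
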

\begin{proof} The first implication is straightforward from \Cref{cor} and Teter rings characterization in \cite{ES17}.
Reciprocally, if $F\in\int_{\m}I^\perp$, then $\m\circ F\subset I^\perp$ by definition, and from the equality of dimensions, it follows that $\m\circ F=I^\perp$. Therefore, $0<\gcl(A)\leq\ell(R/\m)=1$ and we are done.
\end{proof}

\begin{example}\label{Ex3} Recall \Cref{Ex1} with $I^\perp=\langle y_1y_2,y_3^3\rangle$ and $\int_\m I^\perp=\langle y_1^2,y_1y_2,y_1y_3,y_2^2,y_2y_3,y_3^4\rangle$. Then $\overline{y_1^2},\overline{y_1y_3},\overline{y_2^2},\overline{y_2y_3},\overline{y_3^4}$ is a $\res$-basis of $\mathcal{L}_{A,1}$. As a consequence of \Cref{propTeter}, $A$ is Teter if and only if there exists a polynomial
$$F=a_1y_1^2+a_2y_1y_3+a_3y_2^2+a_4y_2y_3+a_5y_3^4$$
such that $\m\circ F=I^\perp$. But $\m\circ F=\langle a_1y_1+a_2y_3,a_3y_2+a_4y_3,a_2y_1+a_4y_2+a_5y_3^3\rangle$ and clearly $y_1y_2$ does not belong here. Therefore, $\gcl(A)>1$.
\end{example}

\subsection{Gorenstein colength 2}

By \cite{EH18}, we know that an Artin ring $A$ of socle degree $s$ is of Gorenstein colength 2 if and only if there exists a polynomial $F$ of degree $s+1$ or $s+2$ such that $K_F\circ F=I^\perp$, where $K_F=(L_1,\dots,L_{n-1},L_n^2)$ and $L_1,\dots,L_n$ are suitable independent linear forms.

Observe that a completely analogous characterization to the one we did for Teter rings is not possible. If $A=R/I$ has Gorenstein colength 2, by \Cref{cor}, there exists $F=\sum_{i=1}^2\sum_{j=1}^{h_i}a_j^iF_j^i\in\int_{\m^2}I^\perp$, where $\lbrace\overline{F^i_j}\rbrace_{1\leq i\leq 2,1\leq j\leq h_i}$ is a $\res$-basis of $\mathcal{L}_{A,2}$, that generates a minimal Gorenstein cover of $A$ and then trivially $I^\perp\subset\langle F\rangle$. However, the reverse implication is not true.

\begin{example} Consider $A=R/\m^3$, where $R$ is the ring of power series in 2 variables, and consider $F=y_1^2y_2^2$. It is easy to see that $F\in\int_{\m^2}I^\perp=S_{\leq 4}$ and $I^\perp\subset\langle F\rangle$. However, it can be proved that $\gcl(A)=3$ using \cite[Corollary 3.3]{Ana09}. Note that $K_F=\m^2$ and hence $\ell(R/K_F)=3$.
\end{example}

Therefore, given $F\in\int_{\m^2}I^\perp$, the condition $I\subset\langle F\rangle$ is not sufficient to ensure that $\gcl(A)=2$. We must require that $\ell(R/K_F)=2$ as well.

\begin{proposition}\label{gcl2} Given a non-Gorenstein non-Teter local Artin ring $A=R/I$, $\gcl(A)=2$ if and only if there exist a polynomial $F=\sum_{i=1}^2\sum_{j=1}^{h_i} a_j^iF_j^i\in\int_{\m^2}I^\perp$ such that $\lbrace\overline{F_j^i}\rbrace_{1\leq i\leq 2,1\leq j\leq h_i}$ is an adapted $\res$-basis of $\mathcal{L}_{A,2}$ and $(L_1,\dots,L_{n-1},L_n^2)\circ F=I^\perp$ for suitable independent linear forms $L_1,\dots,L_n$.
\end{proposition}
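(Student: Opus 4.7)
The plan is to prove both directions separately, using Corollary \ref{cor} and Proposition \ref{KF} for one direction and a direct length computation for the other.

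For the forward implication, I would assume $\gcl(A)=2$ and pick a minimal Gorenstein cover $G=R/\ann F$. Proposition \ref{KF} gives $K_F\circ F=I^\perp$ and tells us $K_F=(L_1,\dots,L_{n-1},L_n^2)$ for suitable linear forms. This already yields the required colon equality for a suitable $F$. It only remains to show that $F$ can be chosen inside the span of the adapted basis $\{F_j^i\}$ of $\mathcal{L}_{A,2}$. But Proposition \ref{F}(i) says $F\in\int_{\m^2}I^\perp$, and Corollary \ref{cor} then produces, after possibly replacing $F$ by a representative of $\overline{F}\in\mathcal{L}_{A,2}$ (which by Proposition \ref{F}(ii) defines the same cover), an expression $F=\sum_{i,j}a_j^iF_j^i$ of the desired form.

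For the reverse implication, set $K=(L_1,\dots,L_{n-1},L_n^2)$ and suppose $K\circ F=I^\perp$. Then $I^\perp\subset\langle F\rangle$, so $G=R/\ann F$ is a Gorenstein cover of $A$, and the question becomes whether it is minimal. The key observation is that the contraction map
\begin{equation*}
\varphi\colon R\longrightarrow \langle F\rangle/I^\perp,\qquad r\longmapsto \overline{r\circ F},
\end{equation*}
is surjective (since $\langle F\rangle=R\circ F$) with kernel exactly $K_F=(I^\perp:_R\langle F\rangle)$, yielding an isomorphism $R/K_F\cong \langle F\rangle/I^\perp$ of $\res$-vector spaces. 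Because $I^\perp\subset\langle F\rangle$, this gives the length identity
\begin{equation*}
\ell(R/K_F)=\dim_\res\langle F\rangle-\dim_\res I^\perp=\ell(G)-\ell(A).
\end{equation*}
The hypothesis $K\circ F=I^\perp$ implies $K\subset K_F$, so $\ell(R/K_F)\leq \ell(R/K)=2$, hence $\ell(G)-\ell(A)\leq 2$. Since $A$ is assumed to be neither Gorenstein nor Teter, $\gcl(A)\geq 2$, and we conclude $\gcl(A)=2$ with $G$ a minimal cover.

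The technically delicate point is less the computations than the bookkeeping in the forward direction: the statement insists that $F$ itself be written as $\sum a_j^iF_j^i$, while the minimal cover supplied by the hypothesis only lives in $\int_{\m^2}I^\perp$ a priori. The argument must therefore invoke Proposition \ref{F}(ii) to guarantee that replacing $F$ by its representative in the adapted basis span does not change the cover. The length identity $\ell(R/K_F)=\ell(G)-\ell(A)$ in the reverse direction, although essentially elementary, is what forces both ``minimality'' and ``cover'' conditions to fit into a single inequality, and should be highlighted as the mechanism translating the colon-ideal shape of $K$ into the desired bound on $\gcl(A)$.
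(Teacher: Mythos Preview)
Your proof is correct and follows essentially the same route as the paper: both directions hinge on the inclusion $K=(L_1,\dots,L_{n-1},L_n^2)\subseteq K_F$ together with $\ell(R/K)=2$, and the forward direction is exactly \Cref{KF} combined with \Cref{cor}. The only cosmetic difference is that for the reverse implication the paper invokes the inequality $\gcl(A)\le\ell(R/K_F)$ from \cite{EH18} directly, whereas you reprove it via the isomorphism $R/K_F\cong\langle F\rangle/I^\perp$; this makes your argument slightly more self-contained but is the same mechanism.
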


\begin{proof} We will only prove that if $F$ satisfies the required conditions, then $\gcl(A)=2$. By definition of $K_F$, if $(L_1,\dots,L_{n-1},L_n^2)\circ F=I^\perp$, then $(L_1,\dots,L_{n-1},L_n^2)\subseteq K_F$. Again by \cite{EH18}, $\gcl(A)\leq \ell(R/K_F)$ and hence $\gcl(A)\leq \ell\left(R/(L_1,\dots,L_{n-1},L_n^2)\right)=2$. Since $\gcl(A)\geq 2$ by hypothesis, then $\gcl(A)=2$. The converse implication follows from \Cref{KF}.
\end{proof}

\begin{example} Recall the ring $A=R/I$ in \Cref{Ex3}. Since
$$\int_{\m^2}I^\perp=\langle y_1^3,y_1^2y_2,y_1y_2^2,y_2^3,y_1^2y_3,y_1y_2y_3,y_2^2y_3,y_1y_3^2,y_2y_3^3,y_3^5\rangle$$
and $\gcl(A)>1$, its Gorenstein colength is 2 if and only if there exist some
$$F\in\langle  y_1^2,y_1y_2,y_1y_3,y_2^2,y_2y_3,y_3^4,y_1^3,y_1^2y_2,y_1y_2^2,y_2^3,y_1^2y_3,y_1y_2y_3,y_2^2y_3,y_1y_3^2,y_2y_3^3,y_3^5\rangle_\res$$
such that $(L_1,\dots,L_{n-1},L_n^2)\circ F=I^\perp$. Consider $F=y_3^4+y_1^2y_2$, then
$$(x_1,x_2^2,x_3)\circ F=\langle x_1\circ F,x_2^2\circ F,x_3\circ F\rangle=\langle y_1y_2,y_3^3\rangle$$
and hence $\gcl(A)=2$.
\end{example}

\section{Minimal Gorenstein covers varieties}

We are now interested in providing a geometric interpretation of the set of all minimal Gorenstein covers $G=R/J$ of a given local Artin $\res$-algebra $A=R/I$. From now on, we will assume that $\res$ is an algebraically closed field. The following result is well known and it is an easy linear algebra exercise.

\begin{lemma}
\label{semic}
Let $\varphi_i:\res^a 	\longrightarrow \res^b$, $i=1\cdots,r$, be a family of Zariski continuous maps.
 Then the function $\varphi^*:\res^a\longrightarrow \mathbb N$ defined by
$\varphi^*(z)=\dim_{\res} \langle \varphi_1(z),\cdots, \varphi_r(z)\rangle_{\res}$ is lower semicontinous, i.e. for all $z_0 \in \res^a$ there is a Zariski open set
$z_0\in U \subset \res^a$ such that for all $z\in U$ it holds
$\varphi^*(z)\geq \varphi^*(z_0)$.
\end{lemma}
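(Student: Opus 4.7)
The plan is to identify $\varphi^*(z)$ with the rank of the $b\times r$ matrix $M(z)$ whose $i$-th column is $\varphi_i(z)$, and then apply the classical characterization of matrix rank via nonvanishing of minors. Each entry of $M(z)$ is obtained by composing $\varphi_i$ with a coordinate projection, so it is a Zariski continuous (hence regular) function of $z\in\res^a$. Consequently every $d\times d$ minor of $M(z)$, being a polynomial expression in these entries, is itself a regular function of $z$.

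Fix $z_0\in\res^a$ and put $d=\varphi^*(z_0)=\rk M(z_0)$. Since $\rk M(z_0)=d$, there exist row indices $1\leq j_1<\cdots<j_d\leq b$ and column indices $1\leq i_1<\cdots<i_d\leq r$ such that the corresponding $d\times d$ submatrix of $M(z_0)$ has nonzero determinant. Denote this determinant, viewed as a function of $z$, by $m_0(z)$. Then $m_0$ is a regular function on $\res^a$ with $m_0(z_0)\neq 0$, so
$$U=\{z\in\res^a\mid m_0(z)\neq 0\}$$
is a Zariski open neighborhood of $z_0$.

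For every $z\in U$, the matrix $M(z)$ admits at least one nonvanishing $d\times d$ minor, and thus $\rk M(z)\geq d$. Translating back to the language of the statement, this gives $\varphi^*(z)\geq d=\varphi^*(z_0)$, which is precisely the claimed lower semicontinuity. I do not expect any genuine obstacle here: the only point requiring care is that the minors of $M(z)$ are Zariski continuous in $z$, which is automatic because sums and products of regular functions $\res^a\to\res$ are again regular. Once that is granted, the proof reduces to the standard rank-via-minors argument.
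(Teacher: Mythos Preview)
Your proof is correct and is precisely the standard rank-via-minors argument; the paper itself gives no proof of this lemma, merely remarking that ``the following result is well known and it is an easy linear algebra exercise.'' Your argument is exactly the exercise the authors have in mind.
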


\begin{theorem}\label{ThMGC}
Let $A=R/I$ be an Artin ring of Gorenstein colength $t$.
There exists a quasi-projective sub-variety $MGC^n(A)$, $n=\dim(R)$, of
$\mathbb P_{\res}\left(\mathcal{L}_{A,t}\right)$
whose set of closed points are the points $[\overline{F}]$, $\overline{F}\in \mathcal{L}_{A,t}$,
such that $G=R/\ann F$ is a minimal Gorenstein cover of $A$.
\end{theorem}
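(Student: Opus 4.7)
The plan is to cut out $MGC^n(A)$ inside $\mathbb{P}_\res(\mathcal{L}_{A,t})$ as the intersection of one Zariski-open and one Zariski-closed condition, both obtained by applying the semicontinuity statement \Cref{semic} to two natural linear families. Fix an adapted basis $\{F_j^i\}$ of $\mathcal{L}_{A,t}$ and a $\res$-basis $g_1,\dots,g_{\ell(A)}$ of $I^\perp$, and parametrize $V:=\int_{\m^t}I^\perp$ by $F=\sum a_j^i F_j^i+\sum b_k g_k$ with $(a_j^i,b_k)\in\res^{h+\ell(A)}$, $h=\dim_\res\mathcal{L}_{A,t}$. Set $s=\socdeg(A)$ and, for each $F\in V$,
\[
r_1(F)=\dim_\res\langle x^\alpha\circ F:|\alpha|\le s+t\rangle_\res=\dim_\res\langle F\rangle,
\]
\[
r_2(F)=\dim_\res\langle g_1,\dots,g_{\ell(A)},\,x^\alpha\circ F:|\alpha|\le s+t\rangle_\res=\dim_\res(I^\perp+\langle F\rangle).
\]
The first step is to verify that $G=R/\ann F$ is a minimal Gorenstein cover of $A$ iff $r_1(F)\ge\ell(A)+t$ and $r_2(F)\le\ell(A)+t$. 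Together with the trivial $r_1(F)\le r_2(F)$, these bounds force $r_1(F)=r_2(F)=\ell(A)+t$; the equality part encodes $I^\perp\subset\langle F\rangle$ (cover condition), and the common value encodes $\ell(G)-\ell(A)=t=\gcl(A)$ (minimality). The converse implication is immediate.

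Next, the families $F\mapsto x^\alpha\circ F$ and $F\mapsto g_k$ are $\res$-linear, hence Zariski continuous, so \Cref{semic} shows that $r_1$ and $r_2$ are lower semicontinuous on $V$. Thus $\{r_1\ge\ell(A)+t\}$ is Zariski open, $\{r_2\le\ell(A)+t\}$ is Zariski closed, and their intersection $\widetilde U\subset V$ is locally closed. A direct check shows $r_2$ is invariant under translation by $I^\perp$ (since $I^\perp+\langle F+g\rangle=I^\perp+\langle F\rangle$ whenever $g\in I^\perp$), and \Cref{F}(ii) gives that on the cover locus $\widetilde U$ one also has $\langle F+g\rangle=\langle F\rangle$, so $r_1$ is invariant on $\widetilde U$ as well. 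Hence $\widetilde U$ is $I^\perp$-invariant and descends through $V\twoheadrightarrow\mathcal{L}_{A,t}$ to a locally closed subset. Both ranks are further unchanged when $F$ is scaled by $\lambda\in\res^*$ (column rescalings preserve rank), so this subset is a cone and projectivizes to the desired quasi-projective subvariety $MGC^n(A)\subset\mathbb{P}_\res(\mathcal{L}_{A,t})$. By \Cref{cor}, every minimal Gorenstein cover arises from some such $\overline F$, so the set of closed points of $MGC^n(A)$ is exactly as claimed.

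The main obstacle is that for $t\ge 3$ the colon ideals $K_F$ need no longer share a common normal form (cf.\ the remark after \Cref{KF}), so one cannot cut out $MGC^n(A)$ by imposing a single equation $K\circ F=I^\perp$ as in the low-colength characterizations \Cref{propTeter} and \Cref{gcl2}. The workaround is to bypass $K_F$ entirely using the pair $(r_1,r_2)$: the lower bound on $r_1$ encodes minimality while the upper bound on $r_2$ encodes the cover condition, and neither requires a priori knowledge of the shape of $K_F$. Verifying the rank characterization is therefore the algebraic heart of the argument, and it relies only on properties of $\int_{\m^t}I^\perp$ and $\gcl(A)$ already established in Sections 2 and 3.
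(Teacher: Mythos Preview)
Your proof is correct and follows essentially the same approach as the paper: both characterize minimal covers by the two rank conditions $\dim_\res\langle F\rangle=\ell(A)+t$ and $\dim_\res(I^\perp+\langle F\rangle)=\ell(A)+t$, and both invoke \Cref{semic} on the linear families $F\mapsto x^\alpha\circ F$ (plus the constant maps $F\mapsto g_k$) to obtain a locally closed, $\res^*$-invariant locus that projectivizes.

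The only technical difference is how you pass to $\mathcal{L}_{A,t}$. The paper fixes a linear complement $E$ of $I^\perp$ inside $\int_{\m^t}I^\perp$, identifies $E$ with $\mathcal{L}_{A,t}$, and carries out the entire semicontinuity argument directly on $E$; \Cref{F} then guarantees every minimal cover has a representative in $E$. You instead work on all of $V=\int_{\m^t}I^\perp$, show $\widetilde U$ is $I^\perp$-saturated via \Cref{F}(ii), and then descend through the linear quotient $V\twoheadrightarrow\mathcal{L}_{A,t}$. Your descent step is correct (a saturated locally closed set maps to a locally closed set under an open linear quotient, since both its closure and the closed complement in that closure are saturated), but the paper's choice of a complement sidesteps this verification entirely and is slightly cleaner. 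Your formulation with one open condition $\{r_1\ge c\}$ and one closed condition $\{r_2\le c\}$ is marginally more economical than the paper's two separate locally closed conditions, but the content is identical.
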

\begin{proof}
Let $E$ be a sub-$\res$-vector space of $\int_{\m^t}I^\perp$ such that
$$
\int_{\m^t}I^\perp \cong E\oplus I^{\perp},
$$
we identify $\mathcal{L}_{A,t}$ with $E$. From \Cref{F}, for all minimal Gorenstein cover $G=R/\ann F$ we may assume that $F\in E$. Given $F\in E$, the quotient $G=R/\ann F$ is a minimal cover of $A$ if and only if the following two numerical conditions hold:
\begin{enumerate}
\item $\dim_{\res}(\langle F\rangle)= \dim_{\res}A+t$, and
\item $\dim_{\res}(I^{\perp}+ \langle F \rangle) =\dim_{\res}\langle F \rangle$.
\end{enumerate}

\noindent
Define the family of Zariski continuous maps $\lbrace\varphi_{\alpha}\rbrace_{\vert\alpha\vert\leq\deg F}$, $\alpha\in\mathbb{N}^n$, where
$$\begin{array}{rrcl}
\varphi_{\alpha}: & E & \longrightarrow & E\\
& F & \longmapsto & x^\alpha\circ F\\
\end{array}$$
\noindent
In particular, $\varphi_0=Id_R$.
We write
$$\begin{array}{rrcl}
\varphi^\ast: & E & \longrightarrow & \mathbb{N}\\
& F & \longmapsto & \dim_\res\langle x^\alpha\circ F,\vert\alpha\vert\leq\deg F\rangle_\res
\end{array}$$

\noindent
Note that $\varphi^\ast(F)=\dim_\res \langle F\rangle$ and, by \Cref{semic}, $\varphi^\ast$ is a lower semicontinuous map. Hence $U_1=\lbrace F\in E\mid \dim_\res\langle F\rangle\geq\dim_\res A+t\rbrace$ is an open Zariski set in $E$. Using the same argument,
$U_2=\lbrace F\in E\mid \dim_\res\langle F\rangle\geq\dim_\res A+t+1\rbrace$
is also an open Zariski set in $E$ and hence $Z_1=E\backslash U_2$ is  a Zariski closed set such that $\dim_\res\langle F\rangle\leq\dim_\res A+t$ for any $F\in Z_1$.
Then $Z_1\cap U_1=\lbrace F\in E\mid \dim_\res\langle F\rangle=\dim_\res A+t\rbrace$ is a locally closed set.

Let $G_1,\cdots,G_e$ be a $\res$-basis of $I^{\perp}$ and consider the constant map
$$\begin{array}{rrcl}
\psi_{i}: & E & \longrightarrow & E\\
& F & \longmapsto & G_i\\
\end{array}$$
for any $i=1,\cdots,e$.
By \Cref{semic},

$$\begin{array}{rrcl}
\psi^\ast: & E & \longrightarrow & \mathbb{N}\\
& F & \longmapsto & \dim_\res \langle\lbrace x^{\alpha}\circ F\rbrace_{\vert\alpha\vert\leq\deg F}, G_1,\dots, G_e\rangle_\res=\dim_\res\left(\langle F\rangle+I^\perp\right)
\end{array}$$

\noindent
is a lower semicontinuous map. Using an analogous argument, we can prove that $T=\lbrace F\in E\mid \dim_\res(I^\perp+\langle F\rangle)=\dim_\res A+t\rbrace$ is a locally closed set. Therefore,
$$W=(Z_1\cap U_1)\cap T=\lbrace F\in E\mid \dim_\res A+t=\dim_\res(I^\perp+\langle F\rangle)=\dim_\res\langle F\rangle\rbrace$$
\noindent
is a locally closed subset of $E$ whose set of closed points are all the $F$ in $E$ satisfying $(1)$ and $(2)$, i.e. defining a minimal Gorenstein cover $G=R/\ann F$ of $A$.

Moreover, since $\langle F\rangle=\langle \lambda F\rangle$ for any $\lambda\in\res^\ast$, conditions $(1)$ and $(2)$ are invariant under the multiplicative action of $\res^*$ on $F$ and hence
$MGC^n(A)=\mathbb P_{\res}(W)\subset \mathbb P_{\res}(E)=\mathbb P_{\res}\left(\mathcal{L}_{A,t}\right)$.
\end{proof}

Recall that the embedding dimension of $A$ is $\embd(A)=\dim_\res\m/(\m^2+I)$.

\begin{proposition}
Let $G$ be a minimal Gorenstein cover of $A$.
Then
$$
\embd(G)\le \tau(A)+\gcl(A)-1.
$$
\end{proposition}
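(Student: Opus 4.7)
The plan is to reduce the desired inequality to two elementary ingredients: the $R$-module isomorphism $I^\perp\cong K_F/J$ coming from the cyclicity of $J^\perp=\langle F\rangle$, and the classical bound $\embd(B)\le\ell(B)-1$ for a nontrivial Artin local $\res$-algebra $B$. Throughout I may assume $\gcl(A)\ge 1$, since otherwise $G=A$ is already Gorenstein.

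First, I write $G=R/J$ with $J=\ann F$ for a generator $F$ of a minimal Gorenstein cover of $A$. The contraction map $R\to\langle F\rangle$, $r\mapsto r\circ F$, has kernel $J$ and thus induces an $R$-module isomorphism $R/J\xrightarrow{\sim}\langle F\rangle$. By \Cref{KF}(i), $K_F\circ F=I^\perp$, so this isomorphism restricts to $K_F/J\xrightarrow{\sim}I^\perp$. Taking minimal numbers of $R$-generators and invoking the standard Matlis-duality identification $\mu_R(I^\perp)=\dim_\res\soc(A)=\tau(A)$, one obtains
$$\tau(A)=\mu_R(K_F/J)=\dim_\res K_F/(\m K_F+J).$$

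Next, I would analyze $\embd(G)=\dim_\res\m/(\m^2+J)$ through the short exact sequence of $\res$-vector spaces
$$0\longrightarrow K_F/\bigl((K_F\cap\m^2)+J\bigr)\longrightarrow \m/(\m^2+J)\longrightarrow \m/(\m^2+K_F)\longrightarrow 0,$$
which is well defined because $J\subset K_F\subset\m$ (the injectivity of the left map reduces to the elementary identity $K_F\cap(\m^2+J)=(K_F\cap\m^2)+J$, immediate from $J\subset K_F$). The right-hand dimension equals $\embd(R/K_F)\le\ell(R/K_F)-1=\gcl(A)-1$, by the Hilbert-function inequality $\HF_{R/K_F}(1)\le\ell(R/K_F)-1$ together with \Cref{KF}(ii). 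For the left-hand dimension, the containment $\m K_F\subset K_F\cap\m^2$ gives $\m K_F+J\subset(K_F\cap\m^2)+J$, whence
$$\dim_\res K_F/\bigl((K_F\cap\m^2)+J\bigr)\le\dim_\res K_F/(\m K_F+J)=\tau(A).$$
Adding the two bounds produces $\embd(G)\le\tau(A)+\gcl(A)-1$.

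I expect the only mildly delicate point to be the Matlis-dual identification $\mu_R(I^\perp)=\tau(A)$, which must be invoked cleanly but is standard for finite-length modules over an equicharacteristic Artin local ring; every remaining step is a direct consequence of \Cref{KF} and Nakayama-style dimension counting, so I do not anticipate a substantive obstacle.
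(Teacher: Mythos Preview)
Your proof is correct and rests on the same two ingredients as the paper's: the identification $\tau(A)=\dim_\res K_F/(\m K_F+J)$ coming from the isomorphism $K_F/J\cong I^\perp$ induced by \Cref{KF}(i), together with $\ell(R/K_F)=\gcl(A)$ from \Cref{KF}(ii). The paper, however, first replaces $R$ by a power-series ring $R'$ with $\dim R'=\embd(G)$, so that $J'\subset(\m')^2$; this allows the single containment $\m'K_{F'}+J'\subset(\m')^2$ and yields $\embd(G)+1=\dim_\res R'/(\m')^2\le\dim_\res R'/(\m'K_{F'}+J')=\gcl(A)+\tau(A)$ in one stroke. Your exact-sequence decomposition of $\m/(\m^2+J)$ is a neat variant that works over any ambient $R$ without arranging $J\subset\m^2$, trading the change of presentation for a slightly longer dimension count; the bound $\embd(R/K_F)\le\ell(R/K_F)-1$ plays the role that the full length $\ell(R'/K_{F'})$ plays in the paper.
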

\begin{proof}
Set $A=R/I$ such that $\embd(A)=\dim R=n$. Consider the power series ring $R'$ of dimension $n+t$ over $\res$ for some $t\geq 0$ such that $G=R'/J'$ with $\embd(G)=\dim R'$. See \cite{EH18} for more details on this construction. We denote by $\m$ and $\m'$ the maximal ideals of $R$ and $R'$, respectively, and consider $K_{F'}=(I^\perp:_{R'} F')$. 
From \Cref{KF}$.(i)$, it is easy to deduce that $K_{F'}/(\m K_{F'}+J')\simeq I^\perp/(\m\circ I^\perp)$. Hence $\tau(A)=\dim_\res K_{F'}/(\m K_{F'}+J')$ by \cite[Proposition 2.6]{ES17}. Then
$$\embd(G)+1=\dim_\res R'/(\m')^2\leq \dim_\res R'/(\m K_{F'}+J')=\gcl(A)+\tau(A),$$
where the last equality follows from \Cref{KF}$.(ii)$.
\end{proof}

\begin{definition}\label{DefMGC}
Given an Artin ring $A=R/I$, the variety $MGC(A)=MGC^n(A)$, with $n=\tau(A)+\gcl(A)-1$, is called the minimal Gorenstein cover variety associated to $A$.
\end{definition}

\begin{remark}\label{RemMGC}
Let us recall that in \cite{EH18} we proved that for low Gorenstein colength of $A$, i.e. $\gcl(A)\le 2$, $\embd(G)=\embd(A)$ for any minimal Gorenstein cover $G$ of $A$. In this situation we can consider $MGC(A)$ as the variety $MGC^n(A)$ with $n=\embd(A)$.
\end{remark}

Observe that this notion of minimal Gorenstein cover variety generalizes the definition of Teter variety introduced in \cite{ES17}, which applies only to rings of Gorenstein colength 1, to any arbitrary colength.

\section{Computing $MGC(A)$ for low Gorenstein colength}\label{s5}

In this section we provide algorithms and examples to compute the variety of minimal Gorenstein covers of a given ring $A$ whenever its Gorenstein colength is 1 or 2. These algorithms can also be used to decide whether a ring has colength greater than 2, since it will correspond to empty varieties.

To start with, we provide the auxiliar algorithm to compute the integral of $I^\perp$ with respect to the $t$-th power of the maximal ideal of $R$. If there exist polynomials defining minimal Gorenstein covers of colength $t$, they must belong to this integral.

\subsection{Computing integrals of modules}

Consider a $\res$-basis $\mathbf{b}=(b_1,\dots,b_t)$ of a finitely generated sub-$R$-module $M$ of $S$ and consider $x_k\circ b_i=\sum_{j=1}^t a_j^i b_j$, for any $1\leq i\leq t$ and $1\leq k\leq n$. Let us define matrices $U_k=(a_j^i)_{1\leq j,i\leq t}$ for any $1\leq k\leq n$. Note that

$$\left(x_k\circ b_1 \cdots x_k\circ b_t\right)=
\left(b_1 \cdots b_t\right)
\left(\begin{array}{ccc}
a_1^1 & \dots & a_1^t\\
\vdots & & \vdots\\
a_t^1 & \dots & a_t^t
\end{array}\right)
.$$

\noindent
Now consider any element $h\in M$. Then
$$x_k\circ h=x_k\circ\sum_{i=1}^t h_ib_i=\sum_{i=1}^t (x_k\circ h_ib_i)=\sum_{i=1}^t (x_k\circ b_i)h_i=$$
$$=\left(x_k\circ b_1 \cdots x_k\circ b_t\right)
\left(\begin{array}{c}
h_1\\
\vdots\\
h_t
\end{array}\right)=\left(b_1 \cdots b_t\right)U_k
\left(\begin{array}{c}
h_1\\
\vdots\\
h_t
\end{array}\right),$$
\noindent
where $h_1,\dots,h_t\in\res$.

\begin{definition} Let $U_k$, $1\leq k\leq n$, be the square matrix of order $t$ such that
$$x_k\circ h=\mathbf{b}\,U_k\,\mathbf{h}^t,$$
where $\mathbf{h}=(h_1,\dots,h_t)$ for any $h\in M$, with $h=\sum_{i=1}^t h_ib_i$. We call $U_k$ the contraction matrix of $M$ with respect to $x_k$ associated to a $\res$-basis $\mathbf{b}$ of $M$.
\end{definition}

\begin{remark} Since $x_kx_l\circ h=x_lx_k\circ h$ for any $h\in M$, we have $U_kU_l=U_lU_k$, with $1\leq k<l\leq n$.
\end{remark}

In \cite{Mou96}, Mourrain provides an effective algorithm based on \Cref{EM} that computes, along with a $\res$-basis of the inverse system $I^\perp$ of an ideal $I$ of $R$, the contraction matrices $U_1,\dots,U_n$ of $I^\perp$ associated to that basis.

\begin{example} Consider $A=R/I$, with $R=\res[\![x_1,x_2]\!]$ and $I=\m^2$. Then $\lbrace 1,y_1,y_2\rbrace$ is a $\res$-basis of $I^\perp$ and $U_1,U_2$ are its contraction matrices with respect to $x_1,x_2$, respectively:
$$U_1=\left(\begin{array}{ccc}
0 & 1& 0\\
0 & 0& 0\\
0 & 0& 0
\end{array}\right),\quad
U_2=\left(\begin{array}{ccc}
0 & 0& 1\\
0 & 0& 0\\
0 & 0& 0
\end{array}\right).$$
\end{example}

Now we provide a modified algorithm based on \Cref{propint} that computes the integral of a finitely generated sub-$R$-module $M$ with respect to the maximal ideal. The algorithm can use the output of Mourrain's integration method as initial data: a $\res$-basis of $I^\perp$ and the contraction matrices associated to this basis.

\begin{algorithm}[H]
\caption[AlINT]{Compute a $\res$-basis of $\int_{\m} M$ and its contraction matrices}
\label{AlINT}
\begin{algorithmic}
\REQUIRE $D=b_1,\dots,b_t$ $\res$-basis of $M$;\\
$U_1,\dots,U_n$ contraction matrices of $M$ associated to the $\res$-basis $D$.
\ENSURE $D=b_1,\dots,b_t,b_{t+1},\dots,b_{t+h}$ $\res$-basis of $\int_{\m} M$;\\
$U'_1,\dots,U'_n$ contraction matrices of $\int_{\m} M$ associated to the $\res$-basis $D$.
\RETURN
\begin{enumerate}
\item Set $\lambda_i=(\lambda^i_1\,\cdots\,\lambda^i_t)^t$, for any $1\leq i\leq n$. Solve the system of equations
\begin{equation}\label{system}
U_k\,\lambda_l-U_l\,\lambda_k= 0 \mbox{ for any } 1\leq k<l\leq n.
\end{equation}
\item Consider a system of generators $\mathbf{H}_1,\dots,\mathbf{H}_m$ of the solutions of \Cref{system}.
\item For any $\mathbf{H}_i=\left[\lambda_1,\dots,\lambda_n\right]$, $1\leq i\leq m$, define the associated polynomial
$$\Lambda_{\mathbf{H}_i}=\displaystyle\sum_{k=1}^n\left(\sum_{j=1}^t\lambda_j^k\int_k b_j\vert_{y_{k+1}=\cdots=y_n=0}\right).$$
\item If $\Lambda_{\mathbf{H}_1}\notin\langle D\rangle_\res$, then $b_{t+1}:=\Lambda_{\mathbf{H}_1}$ and $D=D,b_{t+1}$. Repeat the procedure for $\Lambda_{\mathbf{H}_2},\dots,\Lambda_{\mathbf{H}_m}$.
\item Set $h$ as the number of new elements in $D$.
\item Define square matrices $U'_k$ of order $t+h$ and set $U'_k[i]=U_k[i]$ for $1\leq i\leq t$.
\item Compute $x_k \circ b_i=\sum_{j=1}^t\mu_j^ib_j$ for $t+1\leq i\leq t+h$ and set
$$U'_k[i]=\left(\begin{array}{cccccc}
\mu_1^i & \cdots & \mu_t^i & 0 & \cdots & 0
\end{array}\right)^t.$$
\end{enumerate}
\end{algorithmic}
\end{algorithm}

\begin{remark} Observe that the classes in $\int_\m M/M$ of the output $b_{t+1},\dots,b_{t+h}$ of \Cref{AlINT} form a $\res$-basis of $\int_\m M/M$. Moreover, since the algorithm returns the contraction matrices of $\int_\m M$, we can iterate the procedure in order to obtain a $\res$-basis of $\int_{\m^k} M$ for any $k\geq 1$. By construction, the elements of this $\res$-basis that do not belong to $M$ form an adapted $\res$-basis of $\int_{\m^k}M/M$.
\end{remark}

\begin{example}\label{ExAlINT} Consider $A=R/I$, with $R=\res[\![x_1,x_2]\!]$ and $I=\m^2$. Then $\lbrace 1,y_1,y_2,y_2^2,y_1y_2,y_1^2\rbrace$ is a $\res$-basis of $\int_{\m}I^\perp=S_{\leq 2}$ with the following contraction matrices:
$$U'_1=\left(\begin{array}{cccccc}
0 & 1& 0 & 0 & 0 & 0\\
0 & 0& 0 & 0 & 0 & 1\\
0 & 0& 0 & 0 & 1 & 0\\
0 & 0& 0 & 0 & 0 & 0\\
0 & 0& 0 & 0 & 0 & 0\\
0 & 0& 0 & 0 & 0 & 0\\
\end{array}\right),\quad
U'_2=\left(\begin{array}{cccccc}
0 & 0& 1 & 0 & 0 & 0\\
0 & 0& 0 & 0 & 1 & 0\\
0 & 0& 0 & 1 & 0 & 0\\
0 & 0& 0 & 0 & 0 & 0\\
0 & 0& 0 & 0 & 0 & 0\\
0 & 0& 0 & 0 & 0 & 0\\
\end{array}\right).$$
\end{example}

\bigskip

\subsection{Computing $MGC(A)$ for Teter rings}

The following algorithm provides a method to decide whether a non-Gorenstein ring $A=R/I$ has colength 1 and, if this is the case, it explicitly computes its $MGC(A)$.

Let us consider a non-Gorenstein local Artin ring $A=R/I$ of socle degree $s$. Fix a $\res$-basis $b_1,\dots,b_t$ of $I^\perp$ and consider a polynomial $F=\sum_{j=1}^h a_jF_j\in \int_{\m}I^\perp$, where $\overline{F}_1,\dots,\overline{F}_h$ is an adapted $\res$-basis of $\mathcal{L}_{A,1}$. According to \Cref{propTeter}, $F$ corresponds to a minimal Gorenstein cover if and only if $\dim_\res(\m\circ F)=t$. Therefore, we want to know for which values of $a_1,\dots,a_h$ this equality holds.

Note that $\deg F\leq s+1$ and $x_kx_l \circ F=x_lx_k\circ F$. Then $\m\circ F=\langle x^{\alpha}\circ F: 1\leq\vert \alpha\vert\leq s+1\rangle_\res$. Moreover, by definition of $F$, each $x^{\alpha}\circ F\in I^\perp$, hence $x^{\alpha}\circ F=\sum_{j=1}^t\mu_{\alpha}^jb_j$ for some $\mu_{\alpha}^j\in\res$.

Consider the matrix $A=(\mu_{\alpha}^j)_{1\leq\vert \alpha\vert\leq s+1,\,1\leq j\leq t}$, whose rows are the contractions $x^{\alpha}\circ F$ expressed in terms of the $\res$-basis $b_1,\dots,b_t$ of $I^\perp$. The rows of $A$ are a system of generators of $\m\circ F$ as $\res$-vector space, hence $\dim_\res(\m\circ F)<t$ if and only if all order $t$ minors of $A$ vanish. Let $\mathfrak{a}$ be the ideal generated by all order $t$ minors $p_1,\dots,p_r$ of $A$. Note that the entries of matrix $A$ are homogeneous polynomials of degree 1 in $\res[a_1,\dots,a_h]$. Hence $\mathfrak{a}$ is generated by homogeneous polynomials of degree $t$ in $\res[a_1,\dots,a_h]$. Therefore, we can view the projective algebraic set
$$\mathbb{V}_+(\mathfrak{a})=\lbrace [a_1:\dots:a_h]\in\mathbb{P}_\res^{h-1}\mid p_i(a_1,\dots,a_h)=0,\, 1\leq i\leq r\rbrace,$$
\noindent
as the set of all points that do not correspond to Teter covers. We just proved the following result:

\begin{theorem}
Let $A=R/I$ be an Artin ring with $\gcl(A)=1$, $h=\dim_\res\mathcal{L}_{A,1}$ and $\mathfrak{a}$ be the ideal of minors previously defined. Then
$$MGC(A)=\mathbb{P}_\res^{h-1}\backslash\mathbb{V}_+(\mathfrak{a}).$$
Moreover, for any non-Gorenstein Artin ring $A$, $\gcl(A)=1$ if and only if $\mathfrak{a}\neq 0$.
\end{theorem}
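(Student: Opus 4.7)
The plan is to translate the characterization of Teter covers from Proposition \ref{propTeter} into a linear-algebraic rank condition on the matrix $A$, and then observe that this rank condition cuts out a projective variety by vanishing of maximal minors.

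First, by \Cref{propTeter}, a polynomial $F = \sum_{j=1}^h a_j F_j \in \int_\m I^\perp$ defines a minimal Gorenstein cover of $A$ if and only if $\dim_\res(\m \circ F) = \dim_\res I^\perp = t$. Since $F \in \int_\m I^\perp$, we have $x_i \circ F \in I^\perp$ for each $i$, and $I^\perp$ is an $R$-submodule of $S$, so $x^\alpha \circ F \in I^\perp$ for every $\alpha$ with $|\alpha| \geq 1$. Because $\deg F \leq s+1$, the $\res$-vector space $\m \circ F$ is generated by the finitely many elements $x^\alpha \circ F$ with $1 \leq |\alpha| \leq s+1$, and the matrix $A = (\mu_\alpha^j)$ records the coordinates of these generators in the fixed basis $b_1,\ldots,b_t$ of $I^\perp$. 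Consequently $\dim_\res(\m \circ F) = \rk A$, and the inclusion $\m \circ F \subseteq I^\perp$ forces $\rk A \leq t$, with equality precisely when $F$ gives a minimal cover.

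Next I would translate the rank equality into the minor condition: $\rk A = t$ is equivalent to the non-vanishing of at least one $t \times t$ minor of $A$. Since the entries $\mu_\alpha^j$ are linear in the coordinates $a_1,\ldots,a_h$ (the contractions of $F = \sum a_j F_j$ are $\res$-linear in the $a_j$), every $t \times t$ minor $p_i(a_1,\ldots,a_h)$ is a homogeneous polynomial of degree $t$, so $\mathfrak{a} = (p_1,\ldots,p_r)$ is a homogeneous ideal and $\mathbb{V}_+(\mathfrak{a}) \subset \mathbb{P}^{h-1}_\res$ is well defined. Combining the two equivalences, a point $[a_1:\cdots:a_h] \in \mathbb{P}^{h-1}_\res$ corresponds to a minimal Gorenstein cover if and only if it lies outside $\mathbb{V}_+(\mathfrak{a})$; together with \Cref{ThMGC} and \Cref{RemMGC} (which legitimately identifies $MGC(A)$ with a quasi-projective subset of $\mathbb{P}_\res(\mathcal{L}_{A,1}) = \mathbb{P}^{h-1}_\res$), this yields the equality $MGC(A) = \mathbb{P}^{h-1}_\res \setminus \mathbb{V}_+(\mathfrak{a})$.

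For the second assertion, suppose $A$ is an arbitrary non-Gorenstein Artin ring. If $\gcl(A) = 1$, then some Teter cover exists, hence some specialization $(a_1^0,\ldots,a_h^0)$ gives $\rk A = t$ and a nonzero $t$-minor, so $\mathfrak{a} \neq 0$. Conversely, if $\mathfrak{a} \neq 0$ take a nonzero minor $p$; since $\res$ is algebraically closed and hence infinite, the hypersurface $\{p=0\}$ is a proper subset of $\res^h$, so there exists $(a_1^0,\ldots,a_h^0)$ with $p(a^0) \neq 0$, providing an $F_0 \in \int_\m I^\perp$ with $\m \circ F_0 = I^\perp$; this gives $\gcl(A) \leq 1$, and non-Gorensteinness forces $\gcl(A) = 1$.

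The argument is essentially an assembly of already established facts, so there is no serious obstacle; the only points requiring care are the verification that $x^\alpha \circ F \in I^\perp$ for all $|\alpha| \geq 1$ (needed to make the matrix $A$ well defined), the degree-$t$ homogeneity of the minors (needed to land in $\mathbb{P}^{h-1}_\res$ rather than an affine space), and the use of the algebraic closure of $\res$ in the "if" direction to convert a nonzero polynomial into a concrete nonzero evaluation.
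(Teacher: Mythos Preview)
Your proposal is correct and follows essentially the same approach as the paper: translate \Cref{propTeter} into the rank condition $\rk A = t$, express this via non-vanishing of the order-$t$ minors, and read off the description of $MGC(A)$. Your treatment of the ``moreover'' clause is actually more complete than the paper's own proof, which only argues the contrapositive $\mathfrak{a}=0 \Rightarrow \gcl(A)>1$ explicitly; your use of the infinitude of $\res$ to produce a point with $p(a^0)\neq 0$ correctly handles the direction the paper leaves implicit.
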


\begin{proof}
The first part is already proved. On the other hand, if $\mathfrak{a}=0$, then $\mathbb{V}_+(\mathfrak{a})=\mathbb{P}_\res^{h-1}$ and $MGC(A)=\emptyset$. In other words, there exist no Teter covers, hence $\gcl(A)>1$.
\end{proof}

\begin{algorithm}[H]
\caption{Compute the Teter variety of $A=R/I$ with $n\geq 2$}
\label{AlMGC1}
\begin{algorithmic}
\REQUIRE
$s$ socle degree of $A=R/I$;\\
$b_1,\dots,b_t$ $\res$-basis of the inverse system $I^\perp$;\\
$F_1,\dots,F_h$ adapted $\res$-basis of $\mathcal{L}_{A,1}$;\\
$U_1,\dots,U_n$ contraction matrices of $\int_\m I^\perp$.
\ENSURE
ideal $\mathfrak{a}$ such that $MGC(A)=\mathbb{P}_\res^{h-1}\backslash\mathbb{V}_+(\mathfrak{a})$.
\RETURN
\begin{enumerate}
\item Set $F=a_1F_1+\dots+a_hF_h$ and $\mathbf{F}=(a_1,\dots,a_h)^t$, where $a_1,\dots,a_h$ are variables in $\res$.
\item Build matrix $A=\left(\mu^\alpha_j\right)_{1\leq\vert\alpha\vert\leq s+1,1\leq j\leq t}$, where $$U^\alpha \textbf{F}=\sum_{j=1}^t\mu^\alpha_jb_j,\quad U^\alpha=U_1^{\alpha_1}\cdots U_n^{\alpha_n}.$$
\item Compute the ideal $\mathfrak{a}$ generated by all minors of order $t$ of the matrix $A$.
\end{enumerate}
\end{algorithmic}
\end{algorithm}

With the following example we show how to apply and interpret the output of the algorithm:
\begin{example}
Consider $A=R/I$, with $R=\res[\![x_1,x_2]\!]$ and $I=\m^2$ \cite[Example 4.3]{ES17}. From \Cref{ExAlINT} we gather all the information we need for the input of \Cref{AlMGC1}:
{\sc Input}:
$b_1=1,b_2=y_1,b_3=y_2$ $\res$-basis of $I^\perp$; $F_1=y^2,F_2=y_1y_2,F_3=y_1^2$ adapted $\res$-basis of $\mathcal{L}_{A,1}$; $U_1'$,$U_2'$ contraction matrices of $\int_\m I^\perp$.\\
{\sc Output}: $\rad(\mathfrak{a})=a_2^2-a_1a_3$.\\
Then $MGC(A)=\mathbb{P}^2\backslash\lbrace a_2^2-a_1a_3=0\rbrace$ and any minimal Gorenstein cover $G=R/\ann F$ of $A$ is given by a polynomial $F=a_1y^2_2+a_2y_1y_2+a_3y_1^2$ such that $a_2^2-a_1a_3\neq 0$.
\end{example}

\bigskip

\subsection{Computing $MGC(A)$ in colength 2}

Consider a $\res$-basis $b_1,\dots,b_t$ of $I^\perp$ and an adapted $\res$-basis $\overline{F}_1,\dots,\overline{F}_{h_1},\overline{G}_1,\dots,\overline{G}_{h_2}$ of $\mathcal{L}_{A,2}$ (see \Cref{adapted}) such that
\begin{itemize}
\item $b_1,\dots,b_t,F_1,\dots,F_{h_1}$ is a $\res$-basis of $\int_\m I^\perp$,
\item $b_1,\dots,b_t,F_1,\dots,F_{h_1},G_1,\dots,G_{h_2}$ is a $\res$-basis of $\int_{\m^2} I^\perp$.
\end{itemize}

Throughout this section, we will Consider local Artin rings $A=R/I$ such that $\gcl(A)>1$. If a minimal Gorenstein cover $G=R/\ann H$ of colength 2 exists, then, by \Cref{cor}, we can assume that $H$ is a polynomial of the form
$$H=\sum_{i=1}^{h_1}\alpha_iF_i+\sum_{i=1}^{h_2}\beta_iG_i,\quad \alpha_i,\beta_i\in\res.$$
We want to obtain conditions on the $\alpha$'s and $\beta$'s under which $H$ actually generates a minimal Gorenstein cover of colength 2. By definition, $H\in\int_{\m^2}I^\perp$, hence $x_k\circ H\in\m\circ\int_\m\left(\int_\m I^\perp\right)\subseteq\int_{\m}I^\perp$ and
$$x_k\circ H=\sum_{j=1}^t\mu^j_kb_j+\sum_{j=1}^{h_1}\rho^j_kF_j,\quad \mu_k^j,\rho_k^j\in\res.$$
Set matrices $A_H=(\mu_k^j)$ and $B_H=(\rho_k^j)$. Let us describe matrix $B_H$ explicitly. We have
$$x_k\circ H=\sum_{i=1}^{h_1}\alpha_i(x_k\circ F_i)+\sum_{i=1}^{h_2}\beta_i(x_k\circ G_i).$$
Note that each $x_k\circ G_i$, for any $1\leq i\leq h_2$, is in $\int_{\m}I^\perp$ and hence it can be decomposed as
$$x_k\circ G_i=\sum_{j=1}^t\lambda^{k,i}_jb_j+\sum_{j=1}^{h_1}a^{k,i}_jF_j,\quad \lambda^{k,i}_j,a_j^{k,i}\in\res.$$
Then
$$x_k\circ H=\sum_{i=1}^{h_1}\alpha_i(x_k\circ F_i)+\sum_{i=1}^{h_2}\beta_i\left(\sum_{j=1}^t\lambda^{k,i}_jb_j+\sum_{j=1}^{h_1}a^{k,i}_jF_j\right)=b+\sum_{j=1}^{h_1}\left(\sum_{i=1}^{h_2}\beta_ia^{k,i}_j\right)F_j,$$
where $b:=\sum_{i=1}^{h_1}\alpha_i(x_k\circ F_i)+\sum_{i=1}^{h_2}\beta_i\left(\sum_{j=1}^t\lambda_j^{k,i}b_j\right)\in I^\perp$.
Observe that
\begin{equation}\label{rho}
\rho_k^j=\sum_{i=1}^{h_2}a^{k,i}_j\beta_i,
\end{equation}
hence the entries of matrix $B_H$ can be regarded as polynomials in variables $\beta_1,\dots,\beta_{h_2}$ with coefficients in $\res$.

\begin{lemma}\label{rk(B)} Consider the matrix $B_H=(\rho_k^j)$ as previously defined and let $B'_H=(\varrho^j_k)$ be the matrix of the coefficients of $\overline{L_k\circ H}=\sum_{j=1}^{h_1}\varrho_k^j\overline{F}_j\in\mathcal{L}_{A,1}$ where $L_1,\dots,L_n$ are independent linear forms. Then,
\begin{enumerate}
\item[(i)] $\rk B_H=\dim_\res\left(\displaystyle\frac{\m\circ H+I^\perp}{I^\perp}\right)$,
\item[(ii)] $\rk B'_H=\rk B_H$.
\end{enumerate}
\end{lemma}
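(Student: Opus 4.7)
The plan is to unwind the definitions so that both statements become essentially elementary linear algebra.

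For (i), I will first observe that $\m\circ H=\langle x_1\circ H,\dots,x_n\circ H\rangle_{\res}$, so that
\[
\frac{\m\circ H+I^{\perp}}{I^{\perp}} \;=\; \bigl\langle \overline{x_1\circ H},\dots,\overline{x_n\circ H}\bigr\rangle_{\res}
\]
inside $\int_{\m}I^{\perp}/I^{\perp}$, which by construction is identified with $\mathcal{L}_{A,1}$ and has the adapted basis $\overline{F}_1,\dots,\overline{F}_{h_1}$. Since each $x_k\circ H\in\int_{\m}I^{\perp}$ was written as $\sum_j\mu_k^j b_j+\sum_j\rho_k^j F_j$, the class of $x_k\circ H$ in this quotient is exactly $\sum_{j=1}^{h_1}\rho_k^j\,\overline{F}_j$; the $b_j$-part dies because $b_1,\dots,b_t$ is a $\res$-basis of $I^{\perp}$. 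Thus the rows of $B_H$ are exactly the coordinate vectors of $\overline{x_1\circ H},\dots,\overline{x_n\circ H}$ in the basis $\overline{F}_1,\dots,\overline{F}_{h_1}$, and the row rank of $B_H$ equals $\dim_{\res}\langle \overline{x_k\circ H}\rangle_{\res}$, giving (i).

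For (ii), the idea is that passing from $x_1,\dots,x_n$ to an arbitrary system of independent linear forms $L_1,\dots,L_n$ amounts to an invertible change of coordinates that acts on the \emph{rows} of $B_H$. Writing $L_k=\sum_{l=1}^{n}c_{k,l}x_l$ with $C=(c_{k,l})\in GL_n(\res)$, $\res$-linearity of the contraction in the first argument gives
\[
L_k\circ H \;=\; \sum_{l=1}^{n} c_{k,l}\,(x_l\circ H) \;=\; \sum_{l=1}^{n} c_{k,l}\Bigl(\sum_{j=1}^{t}\mu_l^j b_j+\sum_{j=1}^{h_1}\rho_l^j F_j\Bigr).
\]
Reducing modulo $I^{\perp}$ in $\mathcal{L}_{A,1}$, the $b_j$-terms vanish and the comparison with $\overline{L_k\circ H}=\sum_j\varrho_k^j\overline{F}_j$ yields $\varrho_k^j=\sum_{l}c_{k,l}\rho_l^j$, i.e. $B'_H=C\,B_H$. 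Since $C$ is invertible, $\rk B'_H=\rk B_H$.

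The only subtlety worth being careful about is confirming that the adapted basis condition forces $b_1,\dots,b_t,F_1,\dots,F_{h_1}$ to be a genuine $\res$-basis of $\int_{\m}I^{\perp}$, so that the decomposition $x_k\circ H=\sum\mu_k^j b_j+\sum\rho_k^j F_j$ with $\mu_k^j,\rho_k^j\in\res$ is unique; this is precisely Definition~\ref{adapted} applied to $M=I^{\perp}$ at step $i=1$. Once this is in hand, no further obstacle appears, and both parts reduce to the coordinate computations above.
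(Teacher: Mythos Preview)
Your argument is correct and matches the paper's proof almost verbatim: for (i) you identify the rows of $B_H$ with the coordinate vectors of $\overline{x_k\circ H}$ in the basis $(\overline{F}_j)$ of $\mathcal{L}_{A,1}$, and for (ii) you use that passing from the $x_k$ to independent linear forms $L_k$ is an invertible row operation $B'_H=CB_H$ (the paper phrases this as equality of the two $\res$-spans $\langle\overline{x_k\circ H}\rangle_\res=\langle\overline{L_k\circ H}\rangle_\res$, which is the same thing). One small wording issue: the equality $\m\circ H=\langle x_1\circ H,\dots,x_n\circ H\rangle_{\res}$ is not literally true, since as a $\res$-vector space $\m\circ H$ is spanned by \emph{all} $x^\alpha\circ H$ with $|\alpha|\ge 1$; however, because $H\in\int_{\m^2}I^\perp$ forces $\m^2\circ H\subseteq I^\perp$, the higher-order terms vanish in the quotient and your identity $(\m\circ H+I^\perp)/I^\perp=\langle\overline{x_k\circ H}\rangle_\res$ is valid.
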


\begin{proof}
Since $\overline{x_k\circ H}=\sum_{j=1}^{h_1}\rho_k^j\overline{F}_j$ and $\overline{F}_1,\dots,\overline{F}_{h_1}$ is a $\res$-basis of $\mathcal{L}_{A,1}$, then $\rk B_H=\dim_\res\langle\overline{x_1\circ H},\dots,\overline{x_n\circ H}\rangle_\res$. Note that $\langle\overline{x_1\circ H},\dots,\overline{x_n\circ H}\rangle_\res=(\m\circ H+I^\perp)/I^\perp\subseteq\mathcal{L}_{A,1}$, hence (i) holds.

\noindent
For (ii) it will be enough to prove that $\langle \overline{x_1\circ H},\dots,\overline{x_n\circ H}\rangle_\res=\langle \overline{L_1\circ H},\dots,\overline{L_n\circ H}\rangle_\res$.
Indeed, since $L_i=\sum_{j=1}^n\lambda^i_jx_j$ for any $1\leq i\leq n$, then $\overline{L_i\circ H}=\sum_{j=1}^n\lambda_j^i(\overline{x_j\circ H})\in\langle\overline{x_1\circ H},\dots,\overline{x_n\circ H}\rangle_\res$. The reverse inclusion comes from the fact that $L_1,\dots,L_n$ are linearly independent and hence $(L_1,\dots,L_n)=\m$.
\end{proof}

\begin{lemma}\label{BH0} With the previous notation, consider a polynomial $H\in\int_{\m^2}I^\perp$ with coefficients $\beta_1,\dots,\beta_{h_2}$ of $G_1,\dots,G_{h_2}$, respectively, and its corresponding matrix $B_H$. Then the following are equivalent:
\begin{enumerate}
\item[(i)] $B_H\neq 0$,
\item[(ii)] $\m\circ H\nsubseteq I^\perp$,
\item[(iii)] $(\beta_1,\dots,\beta_{h_2})\neq (0,\dots,0)$.
\end{enumerate}
\end{lemma}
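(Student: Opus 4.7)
The plan is to prove the chain of equivalences (i) $\Leftrightarrow$ (ii) $\Leftrightarrow$ (iii), using \Cref{rk(B)}(i) for the first equivalence and the structure of the adapted basis for the second one.

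First I would establish (i) $\Leftrightarrow$ (ii) directly from \Cref{rk(B)}(i). Since $\rk B_H = \dim_\res\bigl((\m\circ H + I^\perp)/I^\perp\bigr)$, we have $B_H = 0$ if and only if $\rk B_H = 0$, which happens if and only if $\m\circ H \subseteq I^\perp$.

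Next I would handle (iii) $\Rightarrow$ (i) by contraposition: if $B_H = 0$, then by \Cref{rho} we get $\sum_{i=1}^{h_2} a_j^{k,i}\beta_i = 0$ for every $k,j$, but this is a bit indirect. The cleaner route is to argue via (ii). So I would prove (ii) $\Leftrightarrow$ (iii) as follows. Decompose $H = F + G$ where $F = \sum_{i=1}^{h_1}\alpha_i F_i$ and $G = \sum_{i=1}^{h_2}\beta_i G_i$. Since each $F_i$ lies in $\int_\m I^\perp$, we have $\m\circ F \subseteq I^\perp$, so $\m\circ H \subseteq I^\perp$ if and only if $\m\circ G \subseteq I^\perp$, i.e.\ if and only if $G \in \int_\m I^\perp$. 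Now, by \Cref{adapted}, the classes $\overline{G}_1,\dots,\overline{G}_{h_2}$ form a $\res$-basis of $\int_{\m^2} I^\perp /\int_\m I^\perp$. Hence $G\in \int_\m I^\perp$ is equivalent to $\sum_{i=1}^{h_2}\beta_i \overline{G}_i = 0$ in this quotient, which by linear independence forces $\beta_1 = \cdots = \beta_{h_2}=0$. This gives (ii) $\Leftrightarrow$ (iii), and combined with (i) $\Leftrightarrow$ (ii) completes the proof.

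The main obstacle, such as it is, is the direction (ii) $\Rightarrow$ (iii): one might be tempted to argue directly from the matrix identity \Cref{rho} that non-vanishing of $B_H$ forces some $\beta_i \neq 0$ (which is really (i) $\Rightarrow$ (iii)), but the converse direction is cleanest when routed through the intrinsic statement (ii) together with the fact that an adapted basis lifts a basis of the quotient $\int_{\m^2}I^\perp/\int_\m I^\perp$. Once that lifting property is invoked, the rest is formal.
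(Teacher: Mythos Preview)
Your proof is correct and uses the same two ingredients as the paper: \Cref{rk(B)} for the link between $B_H$ and $(\m\circ H + I^\perp)/I^\perp$, and the adapted-basis property to relate the vanishing of the $\beta_i$ to membership of $H$ in $\int_\m I^\perp$. The only difference is organizational: the paper proves the cycle (i)$\Rightarrow$(ii)$\Rightarrow$(iii)$\Rightarrow$(i), and in the last step re-derives (ii)$\Rightarrow$(i) by hand from the explicit decomposition of $x_k\circ H$, whereas you package the argument as two clean biconditionals and get (i)$\Leftrightarrow$(ii) directly from \Cref{rk(B)}(i); your version is slightly more economical but not a genuinely different route.
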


\begin{proof}
$(i)$ implies $(ii)$. If $B_H\neq 0$, by \Cref{rk(B)}, $(\m\circ H+I^\perp)/I^\perp\neq 0$ and hence $\m\circ H\nsubseteq I^\perp$.

\noindent
$(ii)$ implies $(iii)$. If $\m\circ H\nsubseteq I^\perp$, by definition $H\notin \int_\m I^\perp$ and hence $H\in\int_{\m^2} I^\perp\backslash\int_\m I^\perp$. Therefore, some $\beta_i$ must be non-zero.

\noindent
$(iii)$ implies $(i)$. Since $G_i\in\int_{\m^2}I^\perp\backslash\int_\m I^\perp$ for any $1\leq i\leq h_2$ and, by hypothesis, there is some non-zero $\beta_i$, we have that $H\in\int_{\m^2}I^\perp\backslash \int_\m I^\perp$. We claim that $x_k\circ H\in\int_\m I^\perp\backslash I^\perp$ for some $k\in\lbrace 1,\dots,n\rbrace$. Suppose the claim is not true. Then $x_k\circ H\in I^\perp$ for any $1\leq k\leq n$, or equivalently, $\m\circ H\subseteq I^\perp$ but this is equivalent to $H\in\int_\m I^\perp$, which is a contradiction. Since
$$x_k\circ H=b+\sum_{j=1}^{h_1}\rho_k^jF_j\in\int_\m I^\perp\backslash I^\perp,\quad b\in I^\perp,$$
\noindent
for some $k\in\lbrace 1,\dots,n\rbrace$, then $\rho_k^j\neq 0$, for some $j\in\lbrace 1,\dots,h_1\rbrace$. Therefore, $B_H\neq 0$.
\end{proof}

\begin{lemma}\label{lemaB} Consider the previous setting. If $B_H=0$, then either $\gcl(A)=0$ or $\gcl(A)=1$ or $R/\ann H$ is not a cover of $A$.
\end{lemma}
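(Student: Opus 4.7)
My plan is to reason by contraposition: I assume $B_H=0$ together with the hypothesis that $R/\ann H$ is a cover of $A$, i.e.\ $I^\perp\subseteq\langle H\rangle$, and I aim to show that $\gcl(A)\leq 1$.

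The first ingredient I would invoke is \Cref{BH0}, which gives that $B_H=0$ is equivalent to $H\in\int_\m I^\perp$, i.e.\ $\m\circ H\subseteq I^\perp$. Iterating this inclusion, $\m^k\circ H\subseteq \m^{k-1}\circ I^\perp\subseteq I^\perp$ for every $k\geq 1$, so every $x^\alpha\circ H$ with $|\alpha|\geq 1$ already lies in $I^\perp$. Since $\langle H\rangle$ is the $\res$-span of $H$ together with the $x^\alpha\circ H$ for $|\alpha|\geq 1$, this forces
\[
\langle H\rangle\subseteq \res\cdot H+I^\perp,
\]
and combined with the cover inclusion $I^\perp\subseteq\langle H\rangle$ we obtain the equality $\langle H\rangle=\res\cdot H+I^\perp$.

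Next I would split according to whether $H\in I^\perp$. If $H\in I^\perp$, then $\langle H\rangle=I^\perp$, so $\ann H=I$ and $A$ itself is Gorenstein, giving $\gcl(A)=0$. If $H\notin I^\perp$, the sum $\res\cdot H+I^\perp$ is direct and, by Macaulay duality,
\[
\ell(R/\ann H)-\ell(A)=\dim_\res\langle H\rangle-\dim_\res I^\perp=1.
\]
Since $R/\ann H$ is automatically Gorenstein (its inverse system $\langle H\rangle$ is cyclic), it is then a Gorenstein cover of $A$ of excess length $1$, whence $\gcl(A)\leq 1$.

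I expect the only mild obstacle to be establishing the containment $\langle H\rangle\subseteq\res\cdot H+I^\perp$; once this is in hand, the rest is a one-line dimension count. The trichotomy in the conclusion reflects exactly the three possibilities in the above dichotomy: $H\in I^\perp$ (yielding $\gcl(A)=0$), $H\notin I^\perp$ together with $R/\ann H$ actually covering $A$ (yielding $\gcl(A)=1$), or the failure of the cover hypothesis, which is the option we contraposed against.
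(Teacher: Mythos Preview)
Your proposal is correct and follows essentially the same route as the paper: from $B_H=0$ one gets $\m\circ H\subseteq I^\perp$ (via \Cref{BH0} or directly), hence $\langle H\rangle\subseteq \res\cdot H+I^\perp$ and so $\ell(\langle H\rangle)\leq \ell(I^\perp)+1$; assuming $I^\perp\subseteq\langle H\rangle$ then forces $\gcl(A)\leq 1$. The paper compresses this into two lines without the case split on $H\in I^\perp$, but the argument is the same. One small wording issue: in your closing summary you say the case $H\notin I^\perp$ ``yields $\gcl(A)=1$'', but strictly it only yields $\gcl(A)\leq 1$ (which is all the lemma claims once rephrased as the trichotomy).
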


\begin{proof}
If $B_H=0$, then $\m\circ H\subseteq I^\perp$ and hence $\ell(\langle H\rangle)-1\leq \ell(I^\perp)$. If $I^\perp\subseteq \langle H\rangle$, then $G=R/\ann H$ is a Gorenstein cover of $A$ such that $\ell(G)-\ell(A)\leq 1$. Therefore, either $\gcl(A)\leq 1$ or $G$ is not a cover.
\end{proof}

\bigskip
Since we already have techniques to check whether $A$ has colength 0 or 1, we can focus completely on the case $\gcl(A)>1$. Then, according to \Cref{lemaB}, if $G=R/\ann H$ is a Gorenstein cover of $A$, then $B_H\neq 0$.

\begin{proposition}\label{rk B} Assume that $B_H\neq 0$. Then $\rk B_H=1$ if and only if $(L_1,\dots,L_{n-1},L_n^2)\circ H\subseteq I^\perp$
for some independent linear forms $L_1,\dots,L_n$.
\end{proposition}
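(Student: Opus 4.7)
The plan is to reduce the statement to a computation with a single $\res$-linear map $\mathfrak m/\mathfrak m^2 \to \mathcal L_{A,1}$, making crucial use of Lemma \ref{rk(B)}(ii), which lets us replace the contraction basis $x_1,\dots,x_n$ by any basis $L_1,\dots,L_n$ of $\mathfrak m/\mathfrak m^2$ without changing the rank of the associated matrix. The central observation that drives both directions is that since $H\in\int_{\mathfrak m^2}I^\perp$, we have $\mathfrak m^2\circ H\subseteq I^\perp$; in particular $L^2\circ H\in I^\perp$ for \emph{every} linear form $L\in\mathfrak m$. So the $L_n^2$ part of the generator of $(L_1,\dots,L_{n-1},L_n^2)$ will come for free from the ambient integral condition, and the whole question boils down to the linear forms.

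For the forward direction, assume $\rk B_H=1$. Consider the $\res$-linear map
\[
\phi\colon \mathfrak m/\mathfrak m^2\longrightarrow \mathcal L_{A,1},\qquad \overline L\longmapsto \overline{L\circ H}.
\]
By Lemma \ref{rk(B)}, the image of $\phi$ has dimension $\rk B_H=1$, so $\ker\phi$ has dimension $n-1$. Choose linear forms $L_1,\dots,L_{n-1}$ whose classes form a $\res$-basis of $\ker\phi$, and complete to a basis $L_1,\dots,L_n$ of $\mathfrak m/\mathfrak m^2$; these forms are independent. By construction $L_i\circ H\in I^\perp$ for $1\le i\le n-1$, and by the observation above $L_n^2\circ H\in\mathfrak m^2\circ H\subseteq I^\perp$. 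Hence $(L_1,\dots,L_{n-1},L_n^2)\circ H\subseteq I^\perp$.

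For the converse, suppose such independent linear forms exist. Let $B'_H$ be the matrix of coefficients whose $k$-th row represents $\overline{L_k\circ H}$ in the $\res$-basis $\overline F_1,\dots,\overline F_{h_1}$ of $\mathcal L_{A,1}$. Since $L_i\circ H\in I^\perp$ for $i=1,\dots,n-1$, the first $n-1$ rows of $B'_H$ vanish, so $\rk B'_H\le 1$. By Lemma \ref{rk(B)}(ii), $\rk B_H=\rk B'_H\le 1$, and the hypothesis $B_H\neq 0$ forces $\rk B_H=1$.

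No step looks genuinely difficult: the only point that needs care is the application of Lemma \ref{rk(B)}(ii) to justify changing coordinates from $(x_1,\dots,x_n)$ to an arbitrary independent tuple $(L_1,\dots,L_n)$, together with the almost trivial but essential remark that the second-order term $L_n^2\circ H$ is automatically in $I^\perp$ thanks to $H$ living in $\int_{\mathfrak m^2}I^\perp$. Everything else is linear algebra in $\mathfrak m/\mathfrak m^2$.
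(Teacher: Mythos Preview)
Your proof is correct and follows essentially the same approach as the paper: the converse is identical, and for the forward direction the paper argues concretely by picking a nonzero row of $B_H$ and writing the other rows as scalar multiples of it, which amounts exactly to your choice of a basis $L_1,\dots,L_{n-1}$ of $\ker\phi$ completed by $L_n$. Your phrasing via the linear map $\phi\colon\mathfrak m/\mathfrak m^2\to\mathcal L_{A,1}$ is slightly cleaner but not a different idea.
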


\begin{proof}
Since $B_H\neq 0$, there exists $k$ such that $x_k\circ H\notin I^\perp$. Without loss of generality, we can assume that $x_n\circ H\notin I^\perp$. If $\rk B_H=1$, then any other row of $B_H$ must be a multiple of row $n$. Therefore, for any $1\leq i\leq n-1$, there exists $\lambda_i\in\res$ such that $(x_i-\lambda_ix_n)\circ H\in I^\perp$. Take $L_n:=x_n$ and $L_i:=x_i-\lambda_ix_n$. Then $L_1,\dots,L_n$ are linearly independent and $L_i\circ H\in I^\perp$ for any $1\leq i\leq n-1$. Moreover, $L_n^2\circ H\in \m^2\circ\int_{\m^2}I^\perp\subseteq I^\perp$.

Conversely, let $B'_H=(\varrho^j_k)$ be the matrix of the coefficients of $\overline{L_k\circ H}=\sum_{j=1}^{h_1}\varrho_k^j\overline{F_j}\in\mathcal{L}_{A,1}$. By \Cref{rk(B)}, since $B_H\neq 0$, then $B'_H\neq 0$. By hypothesis, $\overline{L_1\circ H}=\dots=\overline{L_{n-1}\circ H}=0$ in $\mathcal{L}_{A,1}$ but, since $B'_H\neq 0$, then $\overline{L_n\circ H}\neq 0$. Then $\rk B'_H=1$ and hence, again by \Cref{rk(B)}, $\rk B_H=1$.
\end{proof}

Recall that $\langle H\rangle=\langle \lambda H\rangle$ for any $\lambda\in\res^\ast$. Therefore, as pointed out in \Cref{ThMGC}, for any $H\neq 0$, a Gorenstein ring $G=R/\ann H$ can be identified with a point $[H]\in\mathbb{P}_\res\left(\mathcal{L}_{A,2}\right)$ by taking coordinates $(\alpha_1:\dots:\alpha_{h_1}:\beta_1:\dots:\beta_{h_2})$. Observe that $\mathbb{P}_\res\left(\mathcal{L}_{A,2}\right)$ is a projective space over $\res$ of dimension $h_1+h_2-1$, hence we will denote it by $\mathbb{P}_\res^{h_1+h_2-1}$.

On the other hand, by \Cref{rho}, any minor of $B_H=(\rho_k^j)$ is a homogeneous polynomial in variables $\beta_1,\dots,\beta_{h_2}$. Therefore, we can consider the homogeneous ideal $\mathfrak{b}$ generated by all order-2-minors of $B_H$ in $\res[\alpha_1,\dots,\alpha_{h_1},\beta_1,\dots,\beta_{h_2}]$. Hence $\mathbb{V}_+(\mathfrak{b})$ is the projective variety consisting of all points $[H]\in\mathbb{P}_\res^{h_1+h_2-1}$ such that $\rk B_H\leq 1$.

\begin{remark} In this section we will use the notation $MGC_2(A)$ to denote the set of points $[H]\in\mathbb{P}_\res^{h_1+h_2-1}$ such that $G=R/\ann H$ is a Gorenstein cover of $A$ with $\ell(G)-\ell(A)=2$. Since we are considering rings such that $\gcl(A)>1$, we can characterize rings of higher colength than 2 as those such that $MGC_2(A)=\emptyset$. On the other hand, $\gcl(A)=2$ if and only if $MGC_2(A)\neq\emptyset$, hence in this case $MGC_2(A)=MGC(A)$, see \Cref{DefMGC} and \Cref{RemMGC}.
\end{remark}

\begin{corollary}\label{MGC1} Let $A=R/I$ be an Artin ring such that $\gcl(A)=2$.  Then
$$MGC_2(A)\subseteq\mathbb{V}_+(\mathfrak{b})\subseteq\mathbb{P}_\res^{h_1+h_2-1}.$$
\end{corollary}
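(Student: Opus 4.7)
The plan is to take an arbitrary point $[H]\in MGC_2(A)$ and show that $\rk B_H=1$, so that every $2\times 2$ minor of $B_H$ vanishes at $[H]$, placing $[H]$ in $\mathbb{V}_+(\mathfrak{b})$.

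First, pick $H$ representing a minimal Gorenstein cover $G=R/\ann H$ of $A$ of colength $2$. By \Cref{KF} applied in the case $\gcl(A)=2$, the colon ideal $K_H=(I^\perp:_R\langle H\rangle)$ has the form $(L_1,\dots,L_{n-1},L_n^2)$ for suitable independent linear forms $L_1,\dots,L_n\in R$. In particular, $(L_1,\dots,L_{n-1},L_n^2)\circ H\subseteq K_H\circ H=I^\perp$, so the hypothesis of \Cref{rk B} on the linear forms is satisfied.

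Next, I need to rule out the possibility that $B_H=0$. If this were the case, \Cref{BH0} would give $\m\circ H\subseteq I^\perp$, and then \Cref{lemaB} would force $\gcl(A)\le 1$ or $G$ not to be a cover, both contradicting $[H]\in MGC_2(A)$ with $\gcl(A)=2$. Hence $B_H\neq 0$, and \Cref{rk B} applies to conclude $\rk B_H=1$.

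Finally, by definition $\mathfrak{b}$ is the homogeneous ideal generated by the $2\times 2$ minors of $B_H$ viewed as polynomials in the coordinates $(\alpha_1:\cdots:\alpha_{h_1}:\beta_1:\cdots:\beta_{h_2})$ on $\mathbb{P}_\res^{h_1+h_2-1}$; since $\rk B_H=1$ at $[H]$, all such minors vanish at $[H]$, so $[H]\in\mathbb{V}_+(\mathfrak{b})$. As $[H]\in MGC_2(A)$ was arbitrary, this proves $MGC_2(A)\subseteq\mathbb{V}_+(\mathfrak{b})$, and the second inclusion is the standard embedding of a projective variety into the ambient projective space. The only subtlety is ensuring $B_H\neq 0$ to apply the rank characterization, but this follows cleanly from the previous two lemmas.
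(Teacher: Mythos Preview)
Your proof is correct and follows essentially the same approach as the paper's: use \Cref{KF} to obtain $(L_1,\dots,L_{n-1},L_n^2)\circ H\subseteq I^\perp$, invoke \Cref{lemaB} to guarantee $B_H\neq 0$, and then apply \Cref{rk B} to conclude $\rk B_H=1$. The only difference is cosmetic---you route the argument through \Cref{BH0} before \Cref{lemaB}, whereas the paper cites \Cref{lemaB} directly---but the logic is identical.
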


\begin{proof}
By \Cref{KF}$.(ii)$, points $[H]\in MGC_2(A)$ correspond to Gorenstein covers $G=R/\ann H$ of $A$ such that $I^\perp=(L_1,\dots,L_{n-1},L_n^2)\circ H$ for some $L_1,\dots,L_n$. Since $B_H\neq 0$ by \Cref{lemaB}, then we can apply \Cref{rk B} to deduce that $\rk B_H=1$.
\end{proof}

Note that the conditions on the rank of $B_H$ do not provide any information about which particular choices of independent linear forms $L_1,\dots,L_n$ satisfy the inclusion $(L_1,\dots,L_{n-1},L_n^2)\circ H\subseteq I^\perp$. In fact, it will be enough to understand which are the $L_n$ that meet the requirements. To that end, we fix $L_n=v_1x_1+\dots+v_nx_n$, where $v=(v_1,\dots,v_n)\neq 0$. We can choose linear forms $L_i=\lambda_1^ix_1+\dots+\lambda_n^ix_n$, where $\lambda_i=(\lambda_1^i,\dots,\lambda_n^i)\neq 0$, for $1\leq i\leq n-1$, such that $L_1,\dots,L_n$ are linearly independent and $\lambda_i\cdot v=0$. Observe that the $k$-vector space generated by $L_1,\dots,L_{n-1}$ can be expressed in terms of $v_1,\dots,v_n$, that is,
$$\langle L_1,\dots,L_{n-1}\rangle_\res=\langle v_lx_k-v_kx_l:\,1\leq k<l\leq n\rangle_\res.$$

Let us now add the coefficients of $L_n$ to matrix $B_H$ by defining the following matrix depending both on $H$ and $v$:
$$C_{H,v}:=\left(\begin{array}{cccc}
\rho_1^1 & \dots & \rho_1^{h_1} & v_1\\
\vdots & & \vdots & \vdots\\
\rho_n^1 &\dots &  \rho_n^{h_1} & v_n\\
\end{array}\right).$$

\begin{proposition}\label{rk C} Assume $B_H\neq 0$. Consider $L_1,\dots,L_n$ linearly independent linear forms with $L_n=v_1x_1+\dots+v_nx_n$, $v=(v_1,\dots,v_n)\neq 0$, and $L_i=\lambda_1^ix_1+\dots+\lambda_n^ix_n$, $\lambda_i=(\lambda^i_1,\dots,\lambda^i_n)\neq 0$, such that $\lambda\cdot v=0$ for any $1\leq i\leq n-1$. Then $\rk C_{H,v}=1$ if and only if $(L_1,\dots,L_{n-1},L_n^2)\circ H\subseteq I^\perp$.
\end{proposition}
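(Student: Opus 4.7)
The plan is to translate the inclusion $(L_1,\dots,L_{n-1},L_n^2)\circ H\subseteq I^\perp$ into explicit vanishing conditions on the scalars $\rho_k^j$ and $v_k$, and then recognize these as the vanishing of the $2\times 2$ minors of $C_{H,v}$.

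First I would dispose of the last generator. Since $L_n=\sum v_kx_k\in\m$, we have $L_n^2\in\m^2$, and because $H\in\int_{\m^2}I^\perp$ this forces $L_n^2\circ H\in\m^2\circ\int_{\m^2}I^\perp\subseteq I^\perp$ automatically. So the inclusion is equivalent to $L_i\circ H\in I^\perp$ for $i=1,\dots,n-1$. By the displayed description of $\langle L_1,\dots,L_{n-1}\rangle_\res$ given just before the statement, this is in turn equivalent to
$$(v_lx_k-v_kx_l)\circ H\in I^\perp\quad \text{for all } 1\leq k<l\leq n.$$

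Next I would plug in the decomposition $x_k\circ H=b_k+\sum_{j=1}^{h_1}\rho_k^jF_j$ with $b_k\in I^\perp$ used to define $B_H$, obtaining
$$(v_lx_k-v_kx_l)\circ H=(v_lb_k-v_kb_l)+\sum_{j=1}^{h_1}(v_l\rho_k^j-v_k\rho_l^j)F_j.$$
The first summand lies in $I^\perp$, and since $\overline{F}_1,\dots,\overline{F}_{h_1}$ are $\res$-linearly independent in $\mathcal{L}_{A,1}=\int_\m I^\perp/I^\perp$, the whole expression lies in $I^\perp$ iff $v_l\rho_k^j-v_k\rho_l^j=0$ for all $j$ and all $1\leq k<l\leq n$. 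These are exactly the $2\times 2$ minors of $C_{H,v}$ that pair one column of $B_H$ with the last column $v$.

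Finally I would check that the vanishing of those particular minors is equivalent to $\rk C_{H,v}=1$. Since $v\neq 0$, the last column of $C_{H,v}$ is nonzero, so $\rk C_{H,v}\geq 1$ regardless. The step that actually needs a short calculation, and is the only mildly delicate point, is showing that the remaining minors (those lying entirely inside $B_H$) also vanish: picking $m$ with $v_m\neq 0$, the relations $v_l\rho_k^j=v_k\rho_l^j$ yield $\rho_k^j=(v_k/v_m)\rho_m^j$ for every $k,j$, and substitution gives
$$\rho_k^j\rho_{k'}^{j'}-\rho_k^{j'}\rho_{k'}^j=\frac{v_kv_{k'}}{v_m^2}\bigl(\rho_m^j\rho_m^{j'}-\rho_m^{j'}\rho_m^j\bigr)=0,$$
so every $2\times 2$ minor of $C_{H,v}$ vanishes and $\rk C_{H,v}\leq 1$. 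Combined with the lower bound this gives $\rk C_{H,v}=1$, and the converse implication is immediate since $\rk C_{H,v}=1$ forces all its $2\times 2$ minors, in particular the ones involving $v$, to vanish. This closes the equivalence; the use of $v\neq 0$ is essential in reducing "minors touching the last column" to "all minors of $C_{H,v}$".
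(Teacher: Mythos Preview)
Your proof is correct and follows essentially the same line as the paper: both reduce the inclusion to the vanishing of the ``cross'' minors $v_l\rho_k^j-v_k\rho_l^j$ via the formula $(v_lx_k-v_kx_l)\circ H=b+\sum_j(v_l\rho_k^j-v_k\rho_l^j)F_j$, after observing that $L_n^2\circ H\in I^\perp$ is automatic. The one organizational difference is in the backward direction: the paper invokes \Cref{rk B} to obtain $\rk B_H=1$ (so the minors lying entirely inside $B_H$ vanish a priori), whereas you bypass that proposition and argue directly that $v\neq 0$ together with the cross-minor relations forces every column of $B_H$ to be proportional to the row vector $(\rho_m^j)_j$, hence all $2\times 2$ minors of $C_{H,v}$ vanish. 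Your route is slightly more self-contained; the paper's route reuses the earlier result.
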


\begin{proof}
If $\rk C_{H,v}=1$, then all 2-minors of $C_{H,v}$ vanish and, in particular,
\begin{equation}\label{minors}
v_l\rho_k^j-v_k\rho_l^j=0 \mbox{ for any }1\leq k<l\leq n\mbox{ and }1\leq j\leq h_1.
\end{equation}
Recall, from \Cref{rho}, that
\begin{equation}\label{altgens}
(v_lx_k-v_kx_l)\circ H=b+\sum_{j=1}^{h_1}\left(v_l\rho_k^j-v_k\rho_l^j\right)F_j, \mbox{ where }b\in I^\perp,
\end{equation}
hence $(v_lx_k-v_kx_l)\circ H\in I^\perp$. Therefore, $L_i\circ H\in I^\perp$ for $1\leq i\leq n-1$. Moreover, $L_n^2\circ H\in \m^2\circ\int_{\m^2}I^\perp\subseteq I^\perp$.

Conversely, if $(L_1,\dots,L_{n-1},L_n^2)\circ H\subseteq I^\perp$, then $\rk B_H=1$ by \Cref{rk B}. Hence $\rk C_{H,v}=1$ if and only if \Cref{minors} holds. Since $L_i\circ H\in I^\perp$ for any $1\leq i\leq n-1$, then $(v_lx_k-v_kx_l)\circ H\in I^\perp$ and we deduce from \Cref{altgens} that \Cref{minors} is indeed satisfied.
\end{proof}

\begin{definition}\label{adm} We say that $v=(v_1,\dots,v_n)$ is an admissible vector of $H$ if $v\neq 0$ and $v_l\rho_k^j-v_k\rho_l^j=0$ for any $1\leq k<l\leq n$ and $1\leq j\leq h_1$.
\end{definition}

\begin{lemma}\label{UniqueV} Given a polynomial $H$ of the previous form such that $\rk B_H=1$:
\begin{enumerate}
\item[(i)] there always exists an admissible vector $v\in\res^n$ of $H$;
\item[(ii)] if $w\in\res^n$ such that $w=\lambda v$, with $\lambda\in\res^\ast$, then $w$ is an admissible vector of $H$;
\item[(iii)] the admissible vector of $H$ is unique up to multiplication by elements of $\res^\ast$.
\end{enumerate}
\end{lemma}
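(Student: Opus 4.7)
The strategy is to exploit the hypothesis $\rk B_H=1$ to factor $B_H$ as a rank-one outer product and reduce every assertion to elementary linear algebra on vectors in $\res^n$. Concretely, I would pick a nonzero vector $u=(u_1,\ldots,u_n)\in\res^n$ spanning the column space of $B_H$, so that every column of $B_H$ is a scalar multiple of $u$. This yields scalars $w^1,\ldots,w^{h_1}\in\res$ such that
\[
\rho_k^j \;=\; u_k\,w^j\qquad (1\le k\le n,\ 1\le j\le h_1),
\]
with $w=(w^1,\ldots,w^{h_1})\neq 0$, since $\rk B_H=1$ forces $B_H\neq 0$.

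For (i), I would take $v:=u$. Substituting into the admissibility condition of \Cref{adm} gives
\[
v_l\rho_k^j-v_k\rho_l^j \;=\; u_l u_k w^j-u_k u_l w^j \;=\; 0
\]
for all $1\le k<l\le n$ and $1\le j\le h_1$. Since $u\neq 0$, $v$ is an admissible vector of $H$. For (ii), I would observe that the defining relations $v_l\rho_k^j-v_k\rho_l^j=0$ are homogeneous of degree one in the entries of $v$, hence preserved by scalar multiplication; and if $\lambda\in\res^*$, then $\lambda v\neq 0$, so $\lambda v$ remains admissible.

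For (iii), suppose $v,v'$ are both admissible. Choose an index $j_0$ with $w^{j_0}\neq 0$, which exists since $w\neq 0$. Substituting the factorization $\rho_k^{j_0}=u_k w^{j_0}$ into the admissibility equations for $v$ and cancelling $w^{j_0}$ yields $v_l u_k-v_k u_l=0$ for all $k<l$; these are exactly the vanishing conditions of all $2\times 2$ minors of the matrix with rows $v$ and $u$, so $v$ and $u$ are linearly dependent, which forces $v=\lambda u$ for some $\lambda\in\res$, with $\lambda\in\res^*$ since $v\neq 0$. The same reasoning applied to $v'$ gives $v'=\lambda' u$ for some $\lambda'\in\res^*$, and therefore $v'=(\lambda'/\lambda)v$, as required.

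There is no real obstacle: the whole argument is a routine consequence of the rank-one factorization of $B_H$. The only small point to handle carefully is the existence of an index $j_0$ with $w^{j_0}\neq 0$ in part~(iii), which is immediate from $B_H\neq 0$.
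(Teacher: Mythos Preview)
Your proof is correct. For parts (ii) and (iii) your argument is essentially the same as the paper's, just packaged through the explicit rank-one factorization $\rho_k^j=u_kw^j$ rather than working entry-by-entry from a fixed nonzero $\rho_k^j$. The one notable difference is in part (i): the paper derives existence of an admissible vector indirectly, invoking \Cref{rk B} to produce linear forms $L_1,\dots,L_n$ with $(L_1,\dots,L_{n-1},L_n^2)\circ H\subseteq I^\perp$ and then \Cref{rk C} to conclude that the coefficient vector of $L_n$ is admissible. Your route is more direct and self-contained: you take $v$ to be a generator of the one-dimensional column space of $B_H$ and verify admissibility by a one-line substitution. Both are valid; yours avoids the detour through the earlier propositions, while the paper's links the admissible vector back to the geometric description in terms of the linear forms $L_i$.
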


\begin{proof}
$(i)$ Since $\rk_H B=1$, \Cref{rk B} ensures the existence of linearly independent linear forms $L_1,\dots,L_n$ such that $(L_1,\dots,L_{n-1},L_n^2)\circ H\subseteq I^\perp$. By \Cref{rk C}, the vector whose components are the coefficients of $L_n$ is admissible.\\
\noindent
$(ii)$ Since $v$ is admissible, $w=\lambda v\neq 0$ and $w_l\rho_k^j-w_k\rho_l^j=\lambda(v_l\rho_k^j-v_k\rho_l^j)=0$.\\
\noindent
$(iii)$ Since $B_H\neq 0$, there exists $\rho_k^j\neq 0$ for some $1\leq j\leq h_1$ and $1\leq k\leq n$. We will first prove that $v_k\neq 0$. Suppose that $v_k=0$. By \Cref{adm}, there exists $v_i\neq 0$, $i\neq k$, and $v_i\rho_k^j-v_k\rho_i^j=0$. Then $v_i\rho_k^j=0$ and we reach a contradiction.

Consider now $w=(w_1,\dots,w_n)$ admissible with respect to $H$. From $\rho_k^jv_l-\rho_l^jv_k=0$ and $\rho_k^jw_l-\rho_l^jw_k=0$, we get $v_l=\left(\rho_l^j/\rho_k^j\right)v_k$ and $w_l=\left(\rho_l^j/\rho_k^j\right)w_k$. Set $\lambda_l:=\rho_l^j/\rho_k^j$. For any $1\leq l\leq n$, with $l\neq k$, from $v_l=\lambda_lv_k$ and $w_l=\lambda_lw_k$, we deduce that $w_l=\left(w_k/v_k\right)v_l$. Hence $w=\lambda v$, where $\lambda=w_k/v_k$, and any two admissible vectors of $H$ are linearly dependent.
\end{proof}

We now want to provide a geometric interpretation of pairs of polynomials and admissible vectors and describe the variety where they lay. Let us first note that whenever $B_H=0$, any $v\neq 0$ is an admissible vector. With this observation and \Cref{UniqueV}, for any polynomial $H$ such that $\rk B_H\leq 1$, we can consider its admissible vectors $v$ as points $[v]$ in the projective space $\mathbb{P}_\res^{n-1}$ by taking homogeneous coordinates $(v_1:\dots:v_n)$.

Let us consider the ideal generated in $\res[\alpha_1,\dots,\alpha_{h_1},\beta_1,\dots,\beta_{h_2},v_1,\dots,v_n]$ by polynomials of the form
\begin{equation}
\rho_k^j\rho_m^l-\rho_k^l\rho_m^j,\quad 1\leq k<m\leq n,1\leq j<l\leq h_1
\end{equation}
\noindent
and
\begin{equation}
v_l\rho_k^j-v_k\rho_l^j,\quad 1\leq k<l\leq n,1\leq j\leq h_1.
\end{equation}
\noindent
It can be checked that all these polynomials are bihomogeneous polynomials in the sets of variables $\alpha_1,\dots,\alpha_{h_1},\beta_1,\dots,\beta_{h_2}$ and $v_1,\dots,v_n$. Therefore, this ideal defines a variety in $\mathbb{P}_\res^{h_1+h_2-1}\times \mathbb{P}_\res^{n-1}$ the points of which satisfy the following equations:
\begin{equation}\label{Gens1}
\rho_k^j\rho_m^l-\rho_k^l\rho_m^j=0,\quad 1\leq k<m\leq n,1\leq j<l\leq h_1;
\end{equation}
\begin{equation}\label{Gens2}
v_l\rho_k^j-v_k\rho_l^j=0,\quad 1\leq k<l\leq n,1\leq j\leq h_1.
\end{equation}

\begin{definition} We denote by $\mathfrak{c}$ the ideal in $\res[\alpha_1,\dots,\alpha_{h_1},\beta_1,\dots,\beta_{h_2},v_1,\dots,v_n]$ generated by all order 2 minors of $C_{H,v}$. We denote by $\mathbb{V}_+(\mathfrak{c})$ the variety defined by $\mathfrak{c}$ in $\mathbb{P}_\res^{h_1+h_2-1}\times \mathbb{P}_\res^{n-1}$.
\end{definition}

\begin{lemma}\label{vc} With the previous definitions, the set of points of $\mathbb{V}_+(\mathfrak{c})$ is
$$\left\lbrace ([H],[v])\in \mathbb{P}_\res^{h_1+h_2-1}\times \mathbb{P}_\res^{n-1}\mid [H]\in\mathbb{V}_+(\mathfrak{b})\mbox{ and $v$ admissible with respect to $H$}\right\rbrace.$$
\end{lemma}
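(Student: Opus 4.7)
The plan is to unpack the definition of $\mathfrak{c}$ and observe that its generators split naturally into two families corresponding to the two conditions in the statement.

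First, I would describe the $2\times 2$ minors of $C_{H,v}$ explicitly. Since $C_{H,v}$ is the $n\times(h_1+1)$ matrix obtained from $B_H=(\rho_k^j)$ by appending $v=(v_1,\dots,v_n)^t$ as the last column, every $2\times 2$ minor of $C_{H,v}$ is determined by a choice of two rows $1\le k<m\le n$ and two columns. Since there is only one $v$-column, the column choice falls into exactly one of two cases: either both columns are among the first $h_1$ (the columns of $B_H$), or one of them is the last column. This yields precisely the two families
$$\rho_k^j\rho_m^l-\rho_k^l\rho_m^j\qquad(1\le k<m\le n,\ 1\le j<l\le h_1)$$
and
$$v_l\rho_k^j-v_k\rho_l^j\qquad(1\le k<l\le n,\ 1\le j\le h_1),$$
so these are exactly the generators of $\mathfrak{c}$ recorded in \Cref{Gens1} and \Cref{Gens2}.

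Next, I would translate the vanishing of each family. A pair $([H],[v])\in\mathbb{P}_\res^{h_1+h_2-1}\times\mathbb{P}_\res^{n-1}$ lies in $\mathbb{V}_+(\mathfrak{c})$ if and only if all these generators vanish at $([H],[v])$. The vanishing of the first family says that every $2\times 2$ minor of $B_H$ is zero, i.e. $\rk B_H\le 1$, which by the definition of $\mathfrak{b}$ is exactly the condition $[H]\in\mathbb{V}_+(\mathfrak{b})$. The vanishing of the second family is literally the condition $v_l\rho_k^j-v_k\rho_l^j=0$ from \Cref{adm}; combined with $v\ne 0$ (which is automatic because $[v]$ is a well-defined point of $\mathbb{P}_\res^{n-1}$), this says exactly that $v$ is admissible with respect to $H$.

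Putting both translations together gives the equality of sets asserted in the lemma. There is essentially no obstacle here: the only point worth double-checking is that the enumeration of $2\times 2$ minors of $C_{H,v}$ is exhaustive and matches the generators of $\mathfrak{c}$, which is immediate from the shape of $C_{H,v}$.
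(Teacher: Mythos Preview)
Your proposal is correct and follows essentially the same approach as the paper: the paper's proof is the one-line remark ``It follows from \Cref{Gens1} and \Cref{Gens2},'' and your argument is precisely the detailed unpacking of that remark, splitting the $2\times 2$ minors of $C_{H,v}$ into the two families and identifying their vanishing with $[H]\in\mathbb{V}_+(\mathfrak{b})$ and with admissibility of $v$, respectively.
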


\begin{proof}
It follows from \Cref{Gens1} and \Cref{Gens2}.
\end{proof}

\begin{lemma}\label{proj}
Let $\pi_1$ be the projection map from $\mathbb{P}_\res^{h_1+h_2-1}\times \mathbb{P}_\res^{n-1}\longrightarrow \mathbb{P}_\res^{h_1+h_2-1}$. Then $\pi_1(\mathbb{V}_+(\mathfrak{c}))=\mathbb{V}_+(\mathfrak{b})$. Moreover, $\pi_1$ is a bijection over the subset of $\mathbb{V}_+(\mathfrak{c})$ where $\rk B_H=1$.
\end{lemma}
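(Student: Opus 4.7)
The plan is to prove the two assertions separately, using the definitions of $\mathfrak{b}$ and $\mathfrak{c}$ together with \Cref{UniqueV}. For the set-theoretic equality $\pi_1(\mathbb{V}_+(\mathfrak{c})) = \mathbb{V}_+(\mathfrak{b})$, I would proceed by double inclusion; for the bijection statement, I would appeal to the uniqueness (up to scalar) of the admissible vector in the rank-one case.

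For the inclusion $\pi_1(\mathbb{V}_+(\mathfrak{c})) \subseteq \mathbb{V}_+(\mathfrak{b})$, the key observation is simply that the $2\times 2$ minors of $B_H$, which generate $\mathfrak{b}$, are exactly those minors of $C_{H,v}$ that involve only the first $h_1$ columns. Thus the generators of $\mathfrak{b}$ sit inside the generators of $\mathfrak{c}$, and any pair $([H],[v])$ vanishing on $\mathfrak{c}$ automatically gives $[H] \in \mathbb{V}_+(\mathfrak{b})$.

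For the reverse inclusion I would take $[H] \in \mathbb{V}_+(\mathfrak{b})$, so that $\rk B_H \le 1$, and split into two cases. If $\rk B_H = 1$, then \Cref{UniqueV}(i) produces an admissible vector $v \in \res^n$ of $H$, and by \Cref{vc} the pair $([H],[v])$ lies in $\mathbb{V}_+(\mathfrak{c})$. If instead $B_H = 0$, every entry $\rho_k^j$ vanishes, so the mixed minors $v_l\rho_k^j - v_k\rho_l^j$ vanish identically for any choice of $v$; picking any nonzero $v \in \res^n$ again produces a pair in $\mathbb{V}_+(\mathfrak{c})$. In both cases $[H]$ is in the image of $\pi_1$.

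The bijection claim on the locus where $\rk B_H = 1$ is then immediate from \Cref{UniqueV}(iii): any two admissible vectors of the same $H$ differ by a nonzero scalar and therefore define the same point of $\mathbb{P}_\res^{n-1}$. Combined with the surjectivity already established, this shows the fibre of $\pi_1$ over each such $[H]$ is a singleton. I do not anticipate any serious obstacle; the only point requiring care is that the bijection statement must be restricted to $\rk B_H = 1$, since in the degenerate case $B_H = 0$ every nonzero vector is admissible and the fibre of $\pi_1$ becomes the whole $\mathbb{P}_\res^{n-1}$, so uniqueness fails off the rank-one locus.
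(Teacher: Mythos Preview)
Your proposal is correct and follows essentially the same approach as the paper's proof: double inclusion using the containment of the minors of $B_H$ among those of $C_{H,v}$ (the paper packages this as \Cref{vc}), the same case split on $\rk B_H$ for the reverse inclusion, and \Cref{UniqueV} for the uniqueness of the fibre in the rank-one case.
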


\begin{proof}
Any element of $\mathbb{V}_+(\mathfrak{c})$ is of the form $([H],[v])$ described in \Cref{vc}. Then $\pi_1([H],[v])=[H]\in\mathbb{V}_+(\mathfrak{b})$. Conversely, given an element $[H]\in\mathbb{V}_+(\mathfrak{b})$, then $\rk B_H\leq 1$. If $B_H=0$, then any $v\neq 0$ satisfies $([H],[v])\in\mathbb{V}_+(\mathfrak{c})$. If $\rk B=1$, by \Cref{UniqueV}, there exist a unique admissible $v$ up to scalar multiplication, hence $([H],[v])$ is the unique point in $\mathbb{V}_+(\mathfrak{c})$ such that $\pi_1([H],[v])=[H]$.
\end{proof}

From \Cref{MGC1}, we know that all covers $G=R/\ann H$ of of $A=R/I$ colength 2 correspond to points $[H]\in\mathbb{V}_+(\mathfrak{b})$ but, in general, not all points of $\mathbb{V}_+(\mathfrak{b})$ correspond to such covers. Therefore, we need to identify and remove those $[H]$ such that $(L_1,\dots,L_{n-1},L_n^2)\circ H\subsetneq I^\perp$.\\

As $\res$-vector space, $(L_1,\dots,L_{n-1},L_n^2)\circ H$ is generated by
\begin{itemize}
\item $(v_l x_k-v_k x_l)\circ H$, $1\leq k<l\leq n$;
\item $x^\theta\circ H$, $2\leq\vert\theta\vert\leq s+2$.
\end{itemize}
\noindent
Since $(L_1,\dots,L_{n-1},L_n^2)\circ H\subseteq I^\perp$, we can provide an explicit description of these generators with respect to  the $\res$-basis $b_1,\dots,b_t$ of $I^\perp$ as follows:
$$(x_kv_l-x_lv_k)\circ H=\sum_{j=1}^t\left(v_l\sum_{i=1}^{h_1}\alpha_i\mu_j^{k,i}-v_k\sum_{i=1}^{h_1}\alpha_i\mu_j^{l,i}+v_l\sum_{i=1}^{h_2}\beta_i\lambda_j^{k,i}-v_k\sum_{i=1}^{h_2}\beta_i\lambda_j^{l,i}\right)b_j,$$

\noindent
for $1\leq l<k\leq n$, with $x_k\circ F_i=\sum_{j=1}^t\mu_j^{k,i}b_j$ and
$x_k\circ G_i=\sum_{j=1}^t\lambda_j^{k,i}b_j+\sum_{j=1}^{h_1}a_j^{k,i}F_j$, $\mu_j^{k,i},\lambda_j^{k,i},a_j^{k,i}\in\res$;
$$x^{\theta}\circ H=\sum_{j=1}^t\left(\sum_{i=1}^{h_1}\mu_j^{\theta,i}\alpha_i+\sum_{i=1}^{h_2}\lambda_j^{\theta,i}\beta_i\right)b_j,$$

\noindent
where $2\leq\vert\theta\vert\leq s+2$, $x^{\theta}\circ F_i=\sum_{j=1}^t\mu_j^{\theta,i}b_j$ and
$x^{\theta}\circ G_i=\sum_{j=1}^t\lambda_j^{\theta,i}b_j$, $\mu_j^{\theta,i},\lambda_j^{\theta,i}\in\res$.

We now define matrix $U_{H,v}$ such that its rows are the coefficients of each generator of $(L_1,\dots,L_{n-1},L_n^2)\circ H$ with respect to the $\res$-basis $b_1,\dots,b_t$ of $I^\perp$:
$$\begin{array}{r|ccc}
• & b_1 & \dots & b_t\\
\hline
(x_2v_1-x_1v_2)\circ  H & \varrho^1_{1,2} & \cdots & \varrho^t_{1,2}\\
\vdots & \vdots &  & \vdots \\
(x_nv_{n-1}-x_{n-1}v_n)\circ H & \varrho^1_{n-1,n} & \cdots & \varrho^t_{n-1,n} \\
x_1^2\circ H  & \varsigma^1_{(2,0,\dots,0)} & \cdots & \varsigma^t_{(2,0,\dots,0)} \\
x_1x_2\circ H  & \varsigma^1_{(1,1,0,\dots,0)} &  \cdots & \varsigma^t_{(1,1,0,\dots,0)} \\
\vdots  & \vdots &  & \vdots \\
x_n^2\circ H  & \varsigma^1_{(0,\dots,0,2)} & \cdots & \varsigma^t_{(0,\dots,0,2)} \\
\vdots  & \vdots &  & \vdots \\
x_n^{s+2}\circ H  & \varsigma^1_{(0,\dots,0,s+2)} & \cdots &  \varsigma^t_{(0,\dots,0,s+2)}\\
\end{array}$$
\noindent
where
$$\varrho^j_{l,k}:=v_l\sum_{i=1}^{h_1}\alpha_i\mu_j^{k,i}-v_k\sum_{i=1}^{h_1}\alpha_i\mu_j^{l,i}+v_l\sum_{i=1}^{h_2}\beta_i\lambda_j^{k,i}-v_k\sum_{i=1}^{h_2}\beta_i\lambda_j^{l,i}$$
\noindent
and
$$\varsigma^j_{\theta}:=\sum_{i=1}^{h_1}\mu_j^{\theta,i}\alpha_i+\sum_{i=1}^{h_2}\lambda_j^{\theta,i}\beta_i.$$
\noindent
It can be easily checked that the entries of this matrix are either bihomogeneous polynomials $\varrho^j_{l,k}$ in variables $((\alpha,\beta),v)$ of bidegree $(1,1)$ or homogeneous polynomials $\varsigma^j_{\theta}$ in variables $(\alpha,\beta)$ of degree 1. Let $\mathfrak{a}$ be the ideal in $\res[\alpha_1,\dots,\alpha_{h_1},\beta_1,\dots,\beta_{h_2},v_1,\dots,v_n]$ generated by all minors of $U_{H,v}$ of order $t=\dim_\res I^\perp$. It can be checked that $\mathfrak{a}$ is a bihomogeneous ideal in variables $((\alpha,\beta),v)$, hence we can think of $\mathbb{V}_+(\mathfrak{a})$ as the following variety in $\mathbb{P}^{h_1+h_2-1}\times\mathbb{P}^{n-1}$:
$$\mathbb{V}_+(\mathfrak{a})=\lbrace ([H],[v])\in\mathbb{P}^{h_1+h_2-1}\times\mathbb{P}^{n-1}\mid \rk U_{H,v}<t\rbrace.$$

\begin{proposition}\label{MGC2} Assume $\gcl(A)>1$. Consider a point $([H],[v])\in\mathbb{V}_+(\mathfrak{c})\subset\mathbb{P}^{h_1+h_2-1}\times\mathbb{P}^{n-1}$. Then
$$[H]\in MGC_2(A)\Longleftrightarrow ([H],[v])\notin \mathbb{V}_+(\mathfrak{a}),$$
\end{proposition}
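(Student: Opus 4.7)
The plan is to identify the row span of the matrix $U_{H,v}$ with the $\res$-vector subspace $(L_1,\dots,L_{n-1},L_n^2)\circ H$ of $I^\perp$, where $L_n$ denotes the linear form with coefficient vector $v$ and $L_1,\dots,L_{n-1}$ are any basis of the hyperplane orthogonal to $v$, as in \Cref{rk C}. Once this identification is made, the hypothesis $([H],[v])\in\mathbb{V}_+(\mathfrak{c})$ together with \Cref{rk C} gives $(L_1,\dots,L_{n-1},L_n^2)\circ H\subseteq I^\perp$, so automatically $\rk U_{H,v}\leq t$; the condition $([H],[v])\notin\mathbb{V}_+(\mathfrak{a})$ then reads $\rk U_{H,v}=t$, i.e. $(L_1,\dots,L_{n-1},L_n^2)\circ H= I^\perp$. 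The proposition will follow by showing this equality is equivalent to $[H]\in MGC_2(A)$ under the hypothesis $\gcl(A)>1$.

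For the row-span identification, every row of $U_{H,v}$ is either a linear combination of the $L_i\circ H$ with $i<n$, which lie in $(L_1,\dots,L_{n-1},L_n^2)\circ H$ trivially, or a monomial contraction $x^\theta\circ H$ with $2\leq|\theta|\leq s+2$; the latter also lie in $(L_1,\dots,L_{n-1},L_n^2)\circ H$ because $L_iL_j$ belongs to this ideal for any $i,j$, so $\m^2\subseteq(L_1,\dots,L_{n-1},L_n^2)$. Conversely, $(L_1,\dots,L_{n-1},L_n^2)\circ H$ is spanned as a $\res$-vector space by elements of the form $x^\alpha L_i\circ H$ and $x^\alpha L_n^2\circ H$; when $|\alpha|\geq 1$ each such element is a $\res$-linear combination of $x^\beta\circ H$ with $|\beta|\geq 2$, while for $|\alpha|=0$ these are exactly $L_i\circ H$ and $L_n^2\circ H$, all of which appear among the rows of $U_{H,v}$.

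For the implication $[H]\in MGC_2(A)\Rightarrow ([H],[v])\notin\mathbb{V}_+(\mathfrak{a})$, \Cref{KF} gives $K_H=(L_1',\dots,L_{n-1}',(L_n')^2)$ with $K_H\circ H=I^\perp$. The hypothesis $\gcl(A)>1$ rules out $B_H=0$ via \Cref{lemaB}, so \Cref{rk B} yields $\rk B_H=1$ and \Cref{UniqueV} makes the admissible vector of $H$ unique up to scalar. Both the coefficient vector of $L_n'$ and $v$ are admissible, so they are proportional, forcing the hyperplanes $\langle L_1',\dots,L_{n-1}'\rangle_\res$ and $\langle L_1,\dots,L_{n-1}\rangle_\res$ to coincide and $L_n^2$ to be proportional to $(L_n')^2$. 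Consequently $(L_1,\dots,L_{n-1},L_n^2)=K_H$, hence $(L_1,\dots,L_{n-1},L_n^2)\circ H=I^\perp$ and $\rk U_{H,v}=t$. Conversely, if $\rk U_{H,v}=t$ the row-span identification gives $(L_1,\dots,L_{n-1},L_n^2)\circ H=I^\perp$; then $I^\perp\subseteq\langle H\rangle$ exhibits $G=R/\ann H$ as a Gorenstein cover, and $(L_1,\dots,L_{n-1},L_n^2)\subseteq K_H$ yields $\ell(G)-\ell(A)=\ell(R/K_H)\leq 2$. Combined with $\gcl(A)>1$ and \Cref{KF}.(ii), this forces $\ell(G)-\ell(A)=2=\gcl(A)$, so $[H]\in MGC_2(A)$.

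The main technical obstacle I anticipate is the row-span identification: one must verify that the specific rows listed in $U_{H,v}$ (the combinations $(v_lx_k-v_kx_l)\circ H$ and the monomial contractions up to degree $s+2$) already exhaust the $\res$-vector space $(L_1,\dots,L_{n-1},L_n^2)\circ H$, despite the fact that this ideal contraction is a priori generated by the much larger set $\{x^\alpha L_i\circ H, x^\alpha L_n^2\circ H : \alpha\in\mathbb{N}^n\}$. The argument rests on the inclusion $\m^2\subseteq(L_1,\dots,L_{n-1},L_n^2)$ to absorb the higher-order generators into monomial rows, and on \Cref{UniqueV} to reconcile the two presentations of $K_H$ in the forward direction.
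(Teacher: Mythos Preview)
Your argument is essentially the paper's: both rest on identifying the row span of $U_{H,v}$ with $(L_1,\dots,L_{n-1},L_n^2)\circ H$ and on the uniqueness of the admissible vector from \Cref{UniqueV}; the paper organizes the proof by the cases $B_H=0$ versus $\rk B_H=1$, while you fold the first case into the forward direction and finish the converse via $\ell(R/K_H)\le 2$ and \Cref{gcl2}.

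One small gap in your forward direction deserves attention. From $L_n'\propto v$ you conclude that the hyperplanes $\langle L_1',\dots,L_{n-1}'\rangle_\res$ and $\langle L_1,\dots,L_{n-1}\rangle_\res$ coincide, but a line in $\m/\m^2$ does not determine a unique complementary hyperplane, so proportionality of $L_n'$ and $v$ alone is not enough. What actually forces the coincidence is that each $L_i'\in K_H$ satisfies $L_i'\circ H\in I^\perp$, hence $\overline{L_i'\circ H}=0$ in $\mathcal L_{A,1}$; since $\rk B_H=1$, the kernel of $L\mapsto\overline{L\circ H}$ is an $(n-1)$-dimensional subspace of $\m/\m^2$, and admissibility of $v$ shows this kernel is exactly $\langle v_lx_k-v_kx_l\rangle_\res=\langle L_1,\dots,L_{n-1}\rangle_\res$. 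So the $L_i'$ are forced into that specific hyperplane and the two ideals agree. (The paper's own proof glosses over the same point when it asserts $\rk U_{H,v}=\dim_\res I^\perp$.)

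A minor citation issue: you invoke \Cref{rk C} to get $(L_1,\dots,L_{n-1},L_n^2)\circ H\subseteq I^\perp$, but \Cref{rk C} assumes $B_H\neq 0$. The inclusion follows directly from $H\in\int_{\m^2}I^\perp$ (for the $x^\theta$-rows) and admissibility of $v$ (for the $(v_lx_k-v_kx_l)$-rows, via \Cref{adm} and \Cref{altgens}), with no hypothesis on $B_H$.
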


\begin{proof}
From \Cref{MGC1} we deduce that if $[H]$ is a point in $MGC_2(A)$, then $\rk B_H\leq 1$. The same is true for any point $([H],[v])\in\mathbb{V}_+(\mathfrak{c})$. Let us consider these two cases:

\emph{Case $B_H=0$}. Since $\gcl(A)>1$, then $R/\ann H$ is not a Gorenstein cover of $A$ by \Cref{lemaB}, hence $[H]\notin MGC_2(A)$. On the other hand, as stated in the proof of \Cref{proj}, $([H],[v])\in\mathbb{V}_+(\mathfrak{c})$ for any $v\neq 0$. By \Cref{BH0} and $\gcl(A)\neq 1$, it follows that
$$(L_1,\dots,L_{n-1},L_n^2)\circ H\subseteq \m\circ H\subsetneq I^\perp$$
\noindent
for any $L_1,\dots,L_n$ linearly independent linear forms, where $L_n=v_1x_1+\dots+v_nx_n$.   Therefore, the rank of matrix $U_{H,v}$ is always strictly smaller than $\dim_\res I^\perp$. Hence $([H],[v])\in\mathbb{V}_+(\mathfrak{a})$ for any $v\neq 0$.

\emph{Case $\rk B_H=1$}. If $[H]\in MGC_2(A)$, then there exist $L_1,\dots,L_n$ such that $(L_1,\dots,L_{n-1},L_n^2)\circ H=I^\perp$. Take $v$ as the vector of coefficients of $L_n$, it is an admissible vector by definition. By \Cref{proj}, $([H],[v])\in\mathbb{V}_+(\mathfrak{c})$ is unique and $\rk U_{H,v}=\dim_\res I^\perp$. Therefore, $([H],[v])\notin\mathbb{V}_+(\mathfrak{a})$.

Conversely, if $([H],[v])\in\mathbb{V}_+(\mathfrak{c})\cap\mathbb{V}_+(\mathfrak{a})$, then $\rk U_{H,v}<\dim_\res I^\perp$ and hence $(L_1,\dots,L_{n-1},L_n^2)\circ H\subsetneq I^\perp$, where $L_n=v_1x_1+\dots+v_nx_n$. By unicity of $v$, no other choice of $L_1,\dots,L_n$ satisfies the inclusion $(L_1,\dots,L_{n-1},L_n^2)\circ H\subset I^\perp$, hence $[H]\notin MGC_2(A)$.
\end{proof}

\begin{corollary} Assume $\gcl(A)>1$. With the previous definitions,
$$MGC_2(A)=\mathbb{V}_+(\mathfrak{b})\backslash \pi_1\left(\mathbb{V}_+(\mathfrak{c})\cap\mathbb{V}_+(\mathfrak{a})\right).$$
\end{corollary}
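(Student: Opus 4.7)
The plan is to reduce the identity to a straightforward consequence of \Cref{MGC2} together with \Cref{proj}. The content of \Cref{MGC2} is a pointwise criterion: for every $([H],[v])\in\mathbb{V}_+(\mathfrak{c})$, $[H]\in MGC_2(A)$ if and only if $([H],[v])\notin\mathbb{V}_+(\mathfrak{a})$. Because this equivalence holds uniformly over all $[v]$ in the fibre of $\pi_1$ above $[H]$, I would interpret it as saying that $[H]\in MGC_2(A)$ exactly when the whole fibre over $[H]$ in $\mathbb{V}_+(\mathfrak{c})$ misses $\mathbb{V}_+(\mathfrak{a})$, which is precisely the complement of $\pi_1(\mathbb{V}_+(\mathfrak{c})\cap\mathbb{V}_+(\mathfrak{a}))$ inside $\pi_1(\mathbb{V}_+(\mathfrak{c}))=\mathbb{V}_+(\mathfrak{b})$.

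For the inclusion $\subseteq$, I would start from $[H]\in MGC_2(A)$. First, \Cref{MGC1} immediately places $[H]\in\mathbb{V}_+(\mathfrak{b})$. Then I would argue by contradiction: if $[H]$ belonged to $\pi_1(\mathbb{V}_+(\mathfrak{c})\cap\mathbb{V}_+(\mathfrak{a}))$, there would exist some $[v]$ with $([H],[v])\in\mathbb{V}_+(\mathfrak{c})\cap\mathbb{V}_+(\mathfrak{a})$, and applying \Cref{MGC2} to this pair would force $[H]\notin MGC_2(A)$, a contradiction.

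For the reverse inclusion $\supseteq$, I would take $[H]\in\mathbb{V}_+(\mathfrak{b})\setminus\pi_1(\mathbb{V}_+(\mathfrak{c})\cap\mathbb{V}_+(\mathfrak{a}))$. By \Cref{proj}, $\pi_1(\mathbb{V}_+(\mathfrak{c}))=\mathbb{V}_+(\mathfrak{b})$, so I can lift $[H]$ to some $([H],[v])\in\mathbb{V}_+(\mathfrak{c})$. Since $[H]\notin\pi_1(\mathbb{V}_+(\mathfrak{c})\cap\mathbb{V}_+(\mathfrak{a}))$, the chosen pair must avoid $\mathbb{V}_+(\mathfrak{a})$, and \Cref{MGC2} then yields $[H]\in MGC_2(A)$.

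The only subtlety, rather than an obstacle, is the distinction between the two fibre geometries over $[H]\in\mathbb{V}_+(\mathfrak{b})$: when $\rk B_H=1$ the fibre is a single point by \Cref{UniqueV}, while when $B_H=0$ the fibre is all of $\mathbb{P}_\res^{n-1}$. In the latter case \Cref{MGC2}, combined with \Cref{BH0} and the hypothesis $\gcl(A)>1$, guarantees $([H],[v])\in\mathbb{V}_+(\mathfrak{a})$ for every $[v]$, so such $[H]$ automatically lie in $\pi_1(\mathbb{V}_+(\mathfrak{c})\cap\mathbb{V}_+(\mathfrak{a}))$ and are correctly excluded from the right-hand side, consistent with the fact that they do not define Gorenstein covers of $A$ of colength $2$.
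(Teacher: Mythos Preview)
Your proposal is correct and follows essentially the same approach as the paper, whose proof is the one-liner ``It follows from \Cref{proj} and \Cref{MGC2}.'' You have simply unpacked that sentence into the two inclusions, and your closing paragraph on the fibre dichotomy (via \Cref{UniqueV} and \Cref{BH0}) is a helpful elaboration rather than a logically necessary step, since \Cref{MGC2} already absorbs that case analysis.
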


\begin{proof}
It follows from \Cref{proj} and \Cref{MGC2}.
\end{proof}

Finally, let us recall the following result for bihomogeneous ideals:

\begin{lemma} Let ideals $\mathfrak{a},\mathfrak{c}$ be as previously defined, $\mathfrak{d}=\mathfrak{a}+\mathfrak{c}$ the sum ideal and $\pi_1:\mathbb{P}_\res^{h_1+h_2-1}\times \mathbb{P}_\res^{n-1}\longrightarrow \mathbb{P}_\res^{h_1+h_2-1}$ be the projection map. Let $\widehat{\mathfrak{d}}$ be the projective elimination of the ideal $\mathfrak{d}$ with respect to variables $v_1,\dots,v_n$. Then,
$$\pi_1(\mathbb{V}_+(\mathbb{\mathfrak{a}})\cap\mathbb{V}_+(\mathbb{\mathfrak{c}}))=\mathbb{V}_+(\widehat{\mathfrak{d}}).$$
\end{lemma}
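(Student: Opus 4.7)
The plan is to split the claim into two self-contained observations: first the set-theoretic identification of $\mathbb{V}_+(\mathfrak{a})\cap \mathbb{V}_+(\mathfrak{c})$ as the zero set of the sum ideal $\mathfrak{d}$, and second the identification of the $\pi_1$-image of $\mathbb{V}_+(\mathfrak{d})$ with $\mathbb{V}_+(\widehat{\mathfrak{d}})$ via the fundamental theorem of projective elimination theory in the bihomogeneous setting.

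For the first step, note that by construction both $\mathfrak{a}$ and $\mathfrak{c}$ live in the bigraded polynomial ring $\res[\alpha_1,\dots,\alpha_{h_1},\beta_1,\dots,\beta_{h_2},v_1,\dots,v_n]$ with bigrading given by the total degree in the $(\alpha,\beta)$-variables and the degree in the $v$-variables; both ideals are generated by bihomogeneous elements, so $\mathfrak{d}=\mathfrak{a}+\mathfrak{c}$ is bihomogeneous as well. The standard set-theoretic identity $\mathbb{V}_+(\mathfrak{a})\cap\mathbb{V}_+(\mathfrak{c})=\mathbb{V}_+(\mathfrak{a}+\mathfrak{c})$ carries over unchanged to the biprojective setting: a point $([H],[v])\in\mathbb{P}^{h_1+h_2-1}_\res\times\mathbb{P}^{n-1}_\res$ vanishes on all bihomogeneous generators of $\mathfrak{a}+\mathfrak{c}$ exactly when it vanishes on each summand.

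For the second step, one invokes the fundamental theorem of elimination theory in the form adapted to multi-projective spaces, which states that the projection $\pi_1\colon \mathbb{P}^{h_1+h_2-1}_\res\times\mathbb{P}^{n-1}_\res\longrightarrow \mathbb{P}^{h_1+h_2-1}_\res$ is a closed map, and that for any bihomogeneous ideal $\mathfrak{d}$ one has $\pi_1(\mathbb{V}_+(\mathfrak{d}))=\mathbb{V}_+(\widehat{\mathfrak{d}})$, where $\widehat{\mathfrak{d}}$ is the projective elimination ideal with respect to the $v$-variables, i.e.\ the saturation of $\mathfrak{d}$ with respect to the irrelevant ideal $(v_1,\dots,v_n)$ intersected with $\res[\alpha_1,\dots,\alpha_{h_1},\beta_1,\dots,\beta_{h_2}]$. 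Combining this with step one gives the desired equality.

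There is no genuine obstacle here: the content is standard and the only care needed is to make sure the notion of projective elimination ideal is the one appropriate for multi-projective elimination (so that the $v$-variables are eliminated relative to the irrelevant ideal $(v_1,\dots,v_n)$ rather than as affine variables). A suitable reference for the bihomogeneous version of the elimination theorem (for example Cox--Little--O'Shea, \emph{Using Algebraic Geometry}, or Shafarevich, \emph{Basic Algebraic Geometry~I}) closes the argument.
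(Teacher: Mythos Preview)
Your proposal is correct and matches the paper's approach: the paper's proof consists solely of the reference ``See \cite[Section 8.5, Exercise 16]{CLO97}'' (Cox--Little--O'Shea, \emph{Ideals, Varieties and Algorithms}), i.e.\ precisely the projective elimination theorem you invoke in step two, with step one being implicit. Your write-up is simply a more explicit unpacking of the same citation; the only cosmetic discrepancy is that the paper points to \emph{Ideals, Varieties and Algorithms} rather than \emph{Using Algebraic Geometry}.
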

\begin{proof}
See \cite[Section 8.5, Exercise 16]{CLO97}.
\end{proof}

\bigskip

We end this section by providing an algorithm to effectively compute the set $MGC_2(A)$ of any ring $A=R/I$ such that $\gcl(A)>1$.

\begin{algorithm}[H]
\caption{Compute $MGC_2(A)$ of $A=R/I$ with $n\geq 2$ and $\gcl(A)>1$}
\label{AlMGC2}
\begin{algorithmic}
\REQUIRE
$s$ socle degree of $A=R/I$;
$b_1,\dots,b_t$ $\res$-basis of the inverse system $I^\perp$;
$F_1,\dots,F_{h_1},G_1,\dots,G_{h_2}$ an adapted $\res$-basis of $\mathcal{L}_{A,2}$;
$U_1,\dots,U_n$ contraction matrices of $\int_{\m^2} I^\perp$.
\ENSURE
ideals $\mathfrak{b}$ and $\widehat{\mathfrak{d}}$ such that
$MGC_2(A)=\mathbb{V}_+(\mathfrak{b})\backslash\mathbb{V}_+(\widehat{\mathfrak{d}})$.
\RETURN
\begin{enumerate}
\item Set $H=\alpha_1F_1+\dots+\alpha_{h_1}F_{h_1}+\beta_1G_1+\dots+\beta_{h_2}G_{h_2}$, where $\alpha,\beta$ are variables in $\res$. Set column vectors $\textbf{H}=(0,\dots,0,\alpha,\beta)^t$ and $v=(v_1,\dots,v_n)^t$ in $R=\res[\alpha,\beta,v]$, where the first $t$ components of $\textbf{H}$ are zero.
\item Build matrix $B_H=(\rho_i^j)_{1\leq i\leq n,\,1\leq j\leq h_1}$, where
$U_i\textbf{H}$ is the column vector $(\mu_i^1,\dots,\mu_i^t,\rho_i^1,\dots,\rho_i^{h_1},0,\dots,0)^t$.
\item Build matrix $C_{H,v}=\left(\begin{array}{c|c}
B_H & v\\
\end{array}\right)$ as an horizontal concatenation of $B_H$ and the column vector $v$.
\item Compute the ideal $\mathfrak{c}\subseteq R$ generated by all minors of order 2 of $B_H$.
\item Build matrix $U_{H,v}$ as a vertical concatenation of matrices $(\varrho^j_{l,k})_{1\leq j\leq h_1,\,1\leq l<k\leq n}$ and $(\varsigma_\theta^j)_{2\leq\vert\theta\vert\leq s+2,\,1\leq j\leq h_1}$, such that $(v_l U_k-v_k U_l)\textbf{H}=(\varrho_{l,k}^1,\cdots,\varrho_{l,k}^{h_1},0,\cdots,0)^t$ and $U^\theta \textbf{H}=(\varsigma_\theta^1,\cdots,\varsigma_\theta^{h_1},0,\cdots,0)^t$, with $1\leq k<l\leq n$ and $2\leq\vert\theta\vert\leq s+2$.
\item Compute the ideal $\mathfrak{a}\subseteq R$ generated by all minors of order $t$ of $U_{H,v}$ and the ideal $\mathfrak{d}=\mathfrak{a}+\mathfrak{c}\subseteq R$ .
\item Compute $\widehat{\mathfrak{d}}\subseteq R'=\res[\alpha,\beta]$, where $\widehat{\cdot}$ denotes the projective elimination of the ideal in $R$ with respect to variables $v_1,\dots,v_n$.
\item Compute the ideal $\mathfrak{b}:=\widehat{\mathfrak{c}}\subseteq R'$.
\end{enumerate}
\end{algorithmic}
\end{algorithm}

The output of \Cref{AlMGC2} can be interpreted as $MGC_2(A)=\mathbb{V}_+(\mathfrak{b})\backslash \mathbb{V}_+(\mathfrak{\widehat{d}})$. Moreover, any point $[\alpha_1:\dots:\alpha_{h_1}:\beta_1:\dots:\beta_{h_2}]\in MGC_2(A)$ corresponds to a minimal Gorenstein cover $G=R/\ann H$ of colength 2 of $A$, where $H=\alpha_1F_1+\dots+\alpha_{h_1}F_{h_1}+\beta_1G_1+\dots+\beta_{h_2}G_{h_2}$. If $MGC_2(A)\neq\emptyset$, then $\gcl(A)=2$ and hence $MGC(A)=MGC_2(A)$. Otherwise, $\gcl(A)>2$.

\begin{example}
Consider $A=R/I$, with $R=\res[\![x_1,x_2]\!]$ and $I=(x_1^2,x_1x_2^2,x_2^4)$. Applying \Cref{AlINT} twice we get the necessary input for \Cref{AlMGC2}:\\
{\sc Input}: $b_1=1,b_2=y_1,b_3=y_2,b_4=y_2^2,b_5=y_1y_2,b_6=y_2^3$ $\res$-basis of $I^\perp$; $F_1=y_2^4,F_2=y_1y_2^2,F_3=y_1^2,G_1=y_1^2y_2,G_2=y_1y_2^3,G_3=y_2^5,G_4=y_1^3$ adapted $\res$-basis of $\mathcal{L}_{A,2}$; $U_1, U_2$ contraction matrices of $\int_{\m^2}I^\perp$.\\
{\sc Output}: $\mathfrak{b}=(b_3b_4,b_2b_4)$, $\mathfrak{\widehat{d}}=(b_3b_4,b_2b_4,b_2^2-b_1b_3)$.\\
$MGC_2(A)=\mathbb{V}_+(b_3b_4,b_2b_4)\backslash\mathbb{V}_+ (b_3b_4,b_2b_4,b_2^2-b_1b_3)=\mathbb{V}_+(b_3b_4,b_2b_4)\backslash\mathbb{V}_+ (b_2^2-b_1b_3).$
Note that if $b_3b_4=b_2b_4=0$ and $b_4\neq 0$, then both $b_2$ and $b_3$ are zero and the condition $b_2^2-b_1b_3=0$ always holds. Therefore, $\gcl(A)=2$ and hence
$$MGC(A)=\mathbb{V}_+(b_4)\backslash\mathbb{V}_+ (b_2^2-b_1b_3)\simeq \mathbb{P}^5\backslash\mathbb{V}_+ (b_2^2-b_1b_3),$$
\noindent
where $(a_1:a_2:a_3:b_1:b_2:b_3)$ are the coordinates of the points in $\mathbb{P}^5$. Moreover, any minimal Gorenstein cover is of the form $G=R/\ann H$, where
$$H=a_1y_2^4+a_2y_1y_2^2+a_3y_1^2+b_1y_1^2y_2+b_2y_1y_2^3+b_3y_2^5$$
\noindent
satisfies $b_2^2-b_1b_3\neq 0$. All such covers admit $(x_1,x_2^2)$ as the corresponding $K_H$.
\end{example}

\section{Computations}

The first aim of this section is to provide a wide range of examples of the computation of the minimal Gorenstein cover variety of a local ring $A$. In \cite{Poo08a}, Poonen provides a complete classification of local algebras over an algebraically closed field of length equal or less than 6. Note that, for higher lengths, the number of isomorphism classes is no longer finite. We will go through all algebras of Poonen's list and restrict, for the sake of simplicity, to fields of characteristic zero.

On the other hand, we also intend to test the efficiency of the algorithms by collecting the computation times. We have implemented algorithms 1, 2 and 3 of \Cref{s5} in the commutative algebra software \emph{Singular} \cite{DGPS}. The computer we use runs into the operating system Microsoft Windows 10 Pro and its technical specifications are the following: Surface Pro 3; Processor: 1.90 GHz Intel Core i5-4300U 3 MB SmartCache; Memory: 4GB 1600MHz DDR3.

\subsection{Teter varieties}

In this first part of the section we are interested in the computation of Teter varieties, that is, the $MGC(A)$ variety for local $\res$-algebras $A$ of Gorenstein colength 1. All the results are obtained by running \Cref{AlMGC1} in \emph{Singular}.

\begin{example}
Consider $A=R/I$, with $R=\res[\![x_1,x_2,x_3]\!]$ and $I=(x_1^2,x_1x_2,x_1x_3,x_2x_3,x_2^3,x_3^3)$. Note that $\HF_A=\lbrace 1,3,2\rbrace$ and $\tau(A)=3$. The output provided by our implementation of the algorithm in \emph{Singular} \cite{DGPS} is the following:
{\scriptsize{
\begin{verbatim}
F;
a(4)*x(2)^3+a(1)*x(3)^3+a(6)*x(1)^2+a(5)*x(1)*x(2)+a(3)*x(1)*x(3)+a(2)*x(2)*x(3)
radical(a);
a(1)*a(4)*a(6)
\end{verbatim}}}
\noindent
We consider points with coordinates $(a_1:a_2:a_3:a_4:a_5:a_6)\in\mathbb{P}^5$. Therefore, $MGC(A)=\mathbb{P}^5\backslash\mathbb{V}_+(a_1a_4a_6)$ and any minimal Gorenstein cover is of the form $G=R/\ann H$, where $H=a_1y_3^3+a_2y_2y_3+a_3y_1y_3+a_4y_2^3+a_5y_1y_2+a_6y_1^2$ with $a_1a_4a_6\neq 0$.
\end{example}

In \Cref{table:TMGC1} below we show the computation time (in seconds) of all isomorphism classes of local $\res$-algebras $A$ of $\gcl(A)=1$ appearing in Poonen's classification \cite{Poo08a}. In this table, we list the Hilbert function of $A=R/I$, the expression of the ideal $I$ up to linear isomorphism, the dimension $h-1$ of the projective space $\mathbb{P}^{h-1}$ where the variety $MGC(A)$ lies and the computation time. Note that our implementation of \Cref{AlMGC1} includes also the computation of the $\res$-basis of $\int_\m I^\perp$, hence the computation time corresponds to the total.

Note that \Cref{AlMGC1} also allows us to prove that all the other non-Gorenstein local rings appearing in Poonen's list have Gorenstein colength at least 2.

{\footnotesize{
\begin{table}[H]
\begin{tabular}{|c|c|c|c|}
\hline
HF(R/I) & I & h-1 & t(s)\\
\hline
$1,2$ & $(x_1,x_2)^2$ & 2 & 0,06\\
$1,2,1$ &  $x_1x_2,x_2^2,x_1^3$ & 2 & 0,06\\
$1,3 $ &  $(x_1,x_2,x_3)^2$ & 5 & 0,13\\
$1,2,1,1 $ &  $x_1^2,x_1x_2,x_2^4$ & 2 & 0,23\\
$1,2,2$ &  $x_1x_2,x_1^3,x_2^3$ & 2 & 0,11\\
 & $x_1x_2^2,x_1^2,x_2^3$ & 2 & 0,05\\
$1,3,1$ & $x_1x_2,x_1x_3,x_2x_3,x_2^2,x_3^2,x_1^3$ & 5 & 0,16\\
$1,4$ & $(x_1,x_2,x_3,x_4)^2$ & 9 & 2,30\\
$1,2,1,1,1$ & $x_1x_2,x_1^5,x_2^2$ & 2 & 0,17\\
$1,2,2,1$ & $x_1x_2,x_1^3,x_2^4$ & 2 & 0,09\\
 & $x_1^2+x_2^3,x_1x_2^2,x_2^4$ & 2 & 0,1\\
$1,3,1,1$ & $x_1x_2,x_1x_3,x_2x_3,x_2^2,x_3^2,x_1^4$ & 5& 3,05\\
$1,3,2$ & $x_1^2,x_1x_2,x_1x_3,x_2^2,x_2x_3^2,x_3^3$ & 5 & 0,33\\
 & $x_1^2,x_1x_2,x_1x_3,x_2x_3,x_2^3,x_3^3$ & 5 & 0,23\\
$1,4,1$ & $x_1x_2,x_1x_3,x_1x_4,x_2x_3,x_2x_4,x_3x_4,x_2^2,x_3^2,x_4^2,x_1^3$ & 9 & 3,21\\
$1,5$ & $(x_1,x_2,x_3,x_4,x_5)^2$ & 14 & 1,25\\
\hline
\end{tabular}
\medskip
\caption[MGC1]{Computation times of $MGC(A)$ of local rings $A=R/I$ with $\ell(A)\leq 6$ and $\gcl(A)=1$.}
\label{table:TMGC1}
\end{table}}}

\subsection{Minimal Gorenstein covers variety in colength 2}
Now we want to compute $MGC(A)$ for $\gcl(A)=2$. All the examples are obtained by running \Cref{AlMGC2} in \emph{Singular}.
\begin{example}
Consider $A=R/I$, with $R=\res[\![x_1,x_2,x_3]\!]$ and $I=(x_1^2,x_2^2,x_3^2,x_1x_2,x_1x_3)$. Note that $\HF_A=\lbrace 1,3,1\rbrace$ and $\tau(A)=2$. The output provided by our implementation of the algorithm in \emph{Singular} \cite{DGPS} is the following:
{\tiny{
\begin{multicols}{2}
\begin{verbatim}
H;
b(10)*x(1)^3+b(7)*x(1)^2*x(2)+
+b(8)*x(1)*x(2)^2+b(9)*x(2)^3+
+b(1)*x(1)^2*x(3)+b(2)*x(1)*x(2)*x(3)+
+b(3)*x(2)^2*x(3)+b(4)*x(1)*x(3)^2+
+b(6)*x(2)*x(3)^2+b(5)*x(3)^3+
+a(5)*x(1)^2+a(4)*x(1)*x(2)+
+a(3)*x(2)^2+a(2)*x(1)*x(3)+
+a(1)*x(3)^2
radical(b);
_[1]=b(8)^2-b(7)*b(9)
_[2]=b(7)*b(8)-b(9)*b(10)
_[3]=b(6)*b(8)-b(4)*b(9)
_[4]=b(3)*b(8)-b(2)*b(9)
_[5]=b(2)*b(8)-b(1)*b(9)
_[6]=b(1)*b(8)-b(3)*b(10)
_[7]=b(7)^2-b(8)*b(10)
_[8]=b(6)*b(7)-b(4)*b(8)
_[9]=b(4)*b(7)-b(6)*b(10)
_[10]=b(3)*b(7)-b(1)*b(9)
_[11]=b(2)*b(7)-b(3)*b(10)
_[12]=b(1)*b(7)-b(2)*b(10)
_[13]=b(3)*b(6)-b(5)*b(9)
_[14]=b(2)*b(6)-b(5)*b(8)
_[15]=b(1)*b(6)-b(5)*b(7)
_[16]=b(2)*b(5)-b(4)*b(6)
_[17]=b(4)^2-b(1)*b(5)
_[18]=b(3)*b(4)-b(5)*b(8)
_[19]=b(2)*b(4)-b(5)*b(7)
_[20]=b(1)*b(4)-b(5)*b(10)
_[21]=b(2)*b(3)-b(4)*b(9)
_[22]=b(1)*b(3)-b(4)*b(8)
_[23]=b(2)^2-b(4)*b(8)
_[24]=b(1)*b(2)-b(6)*b(10)
_[25]=b(1)^2-b(4)*b(10)
_[26]=b(3)*b(5)*b(10)-b(6)^2*b(10)
_[27]=b(3)^2*b(10)-b(6)*b(9)*b(10)
_[28]=b(4)*b(6)^2-b(5)^2*b(8)
_[29]=b(6)^3*b(10)-b(5)^2*b(9)*b(10)
radical(d);
_[1]=b(8)^2-b(7)*b(9)
_[2]=b(7)*b(8)-b(9)*b(10)
_[3]=b(6)*b(8)-b(4)*b(9)
_[4]=b(3)*b(8)-b(2)*b(9)
_[5]=b(2)*b(8)-b(1)*b(9)
_[6]=b(1)*b(8)-b(3)*b(10)
_[7]=b(7)^2-b(8)*b(10)
_[8]=b(6)*b(7)-b(4)*b(8)
_[9]=b(4)*b(7)-b(6)*b(10)
_[10]=b(3)*b(7)-b(1)*b(9)
_[11]=b(2)*b(7)-b(3)*b(10)
_[12]=b(1)*b(7)-b(2)*b(10)
_[13]=b(3)*b(6)-b(5)*b(9)
_[14]=b(2)*b(6)-b(5)*b(8)
_[15]=b(1)*b(6)-b(5)*b(7)
_[16]=b(2)*b(5)-b(4)*b(6)
_[17]=b(4)^2-b(1)*b(5)
_[18]=b(3)*b(4)-b(5)*b(8)
_[19]=b(2)*b(4)-b(5)*b(7)
_[20]=b(1)*b(4)-b(5)*b(10)
_[21]=b(2)*b(3)-b(4)*b(9)
_[22]=b(1)*b(3)-b(4)*b(8)
_[23]=b(2)^2-b(4)*b(8)
_[24]=b(1)*b(2)-b(6)*b(10)
_[25]=b(1)^2-b(4)*b(10)
_[26]=b(3)*b(5)*b(10)-b(6)^2*b(10)
_[27]=b(3)^2*b(10)-b(6)*b(9)*b(10)
_[28]=b(4)*b(6)^2-b(5)^2*b(8)
_[29]=a(5)*b(3)*b(5)-a(5)*b(6)^2
_[30]=a(5)*b(3)^2-a(5)*b(6)*b(9)
_[31]=b(6)^3*b(10)-b(5)^2*b(9)*b(10)
_[32]=a(5)*b(6)^3-a(5)*b(5)^2*b(9)
\end{verbatim}
\end{multicols}}}
\noindent
We can simplify the output by using the primary decomposition of the ideal $\mathfrak{b}=\bigcap_{i=1}^k \mathfrak{b}_i$. Then,
$$MGC(A)=\left(\bigcup_{i=1}^k \mathbb{V}_+(\mathfrak{b}_i)\right)\backslash\mathbb{V}_+(\mathfrak{\widehat{d}})=\bigcup_{i=1}^k \left(\mathbb{V}_+(\mathfrak{b}_i)\backslash\mathbb{V}_+(\mathfrak{\widehat{d}})\right).$$
\noindent
\emph{Singular} \cite{DGPS} provides a primary decomposition $\mathfrak{b}=\mathfrak{b}_1\cap \mathfrak{b}_2$ that satisfies $\mathbb{V}_+(\mathfrak{b}_2)\backslash\mathbb{V}_+(\mathfrak{\widehat{d}})=\emptyset$. Therefore, we get
$$MGC(A)=\mathbb{V}_+(b_1,b_2,b_4,b_7,b_8,b_{10},b_3b_6-b_5b_9)\backslash\left(\mathbb{V}_+(a_5)\cup\mathbb{V}_+ (-b_6^3+b_5^2b_9,b_3b_5-b_6^2,b_3^2-b_6b_9)\right).$$
\noindent
in $\mathbb{P}^{14}$. We can eliminate some of the variables and consider $MGC(A)$ to be the following variety:
$$MGC(A)=\mathbb{V}_+(b_3b_6-b_5b_9)\backslash\left(\mathbb{V}_+(a_5)\cup\mathbb{V}_+ (b_5^2b_9-b_6^3,b_3b_5-b_6^2,b_3^2-b_6b_9)\right)\subset \mathbb{P}^8.$$
\noindent
Therefore, any minimal Gorenstein cover is of the form $G=R/\ann H$, where
$$H=a_1y_3^2+a_2y_1y_3+a_3y_2^2+a_4y_1y_2+a_5y_1^2+b_3y_2^2y_3+b_5y_3^3+b_6y_2y_3^2+b_9y_2^3$$
\noindent
satisfies $b_3b_6-b_5b_9=0$, $a_5\neq 0$ and at least one of the following conditions: $b_5^2b_9-b_6^3\neq 0, b_3b_5-b_6^2\neq 0, b_3^2-b_6b_9\neq 0$.

\noindent
Moreover, note that $\mathbb{V}_+(\mathfrak{c})\backslash\mathbb{V}_+(\mathfrak{a})=\mathbb{V}_+(\mathfrak{c_1})\backslash\mathbb{V}_+(\mathfrak{a})$, where $\mathfrak{c}=\mathfrak{c}_1\cap \mathfrak{c}_2$ is the primary decomposition of $\mathfrak{c}$ and $\mathfrak{c}_1=\mathfrak{b}_1+(v_1,v_2b_5-v_3b_6,v_2b_3-v_3b_9)$. Hence, any $K_H$ such that $K_H\circ H=I^\perp$ will be of the form $K_H=(L_1,L_2,L_3^2)$, where $L_1,L_2,L_3$ are independent linear forms in $R$ such that $L_3=v_2x_2+v_3x_3$, with $v_2b_5-v_3b_6=v_2b_3-v_3b_9=0$.
\end{example}

\begin{example}
Consider $A=R/I$, with $R=\res[\![x_1,x_2,x_3]\!]$ and $I=(x_1x_2,x_1x_3,x_2x_3,x_2^2,x_3^2-x_1^3)$. Note that $\HF_A=\lbrace 1,3,1,1\rbrace$ and $\tau(A)=2$. The output provided by our implementation of the algorithm in \emph{Singular} \cite{DGPS} is the following:
{\tiny{
\begin{multicols}{2}
\begin{verbatim}
H;
-b(10)*x(1)^4+b(9)*x(1)^2*x(2)+
+b(7)*x(1)*x(2)^2+b(8)*x(2)^3+
+b(6)*x(1)^2*x(3)+b(1)*x(1)*x(2)*x(3)+
+b(2)*x(2)^2*x(3+b(3)*x(1)*x(3)^2+
+b(4)*x(2)*x(3)^2+b(5)*x(3)^3+
+a(5)*x(1)*x(2)+a(4)*x(2)^2+
+a(3)*x(1)*x(3)+a(2)*x(2)*x(3)+
+a(1)*x(3)^2
radical(b);
_[1]=b(8)*b(10)
_[2]=b(7)*b(10)
_[3]=b(4)*b(10)
_[4]=b(2)*b(10)
_[5]=b(1)*b(10)
_[6]=b(6)*b(8)-b(2)*b(9)
_[7]=b(7)^2-b(8)*b(9)
_[8]=b(6)*b(7)-b(1)*b(9)
_[9]=b(4)*b(7)-b(3)*b(8)
_[10]=b(3)*b(7)-b(4)*b(9)
_[11]=b(2)*b(7)-b(1)*b(8)
_[12]=b(1)*b(7)-b(2)*b(9)
_[13]=b(4)*b(6)-b(5)*b(9)
_[14]=b(2)*b(6)-b(4)*b(9)
_[15]=b(1)*b(6)-b(3)*b(9)
_[16]=b(4)^2-b(2)*b(5)
_[17]=b(3)*b(4)-b(1)*b(5)
_[18]=b(2)*b(4)-b(5)*b(8)
_[19]=b(1)*b(4)-b(5)*b(7)
_[20]=b(3)^2-b(5)*b(6)+b(3)*b(10)
_[21]=b(2)*b(3)-b(5)*b(7)
_[22]=b(1)*b(3)-b(5)*b(9)
_[23]=b(2)^2-b(4)*b(8)
_[24]=b(1)*b(2)-b(3)*b(8)
_[25]=b(1)^2-b(4)*b(9)
_[26]=b(5)*b(9)*b(10)
_[27]=b(3)*b(9)*b(10)
radical(d);
_[1]=b(8)*b(10)
_[2]=b(7)*b(10)
_[3]=b(4)*b(10)
_[4]=b(2)*b(10)
_[5]=b(1)*b(10)
_[6]=b(6)*b(8)-b(2)*b(9)
_[7]=b(7)^2-b(8)*b(9)
_[8]=b(6)*b(7)-b(1)*b(9)
_[9]=b(4)*b(7)-b(3)*b(8)
_[10]=b(3)*b(7)-b(4)*b(9)
_[11]=b(2)*b(7)-b(1)*b(8)
_[12]=b(1)*b(7)-b(2)*b(9)
_[13]=b(4)*b(6)-b(5)*b(9)
_[14]=b(2)*b(6)-b(4)*b(9)
_[15]=b(1)*b(6)-b(3)*b(9)
_[16]=b(4)^2-b(2)*b(5)
_[17]=b(3)*b(4)-b(1)*b(5)
_[18]=b(2)*b(4)-b(5)*b(8)
_[19]=b(1)*b(4)-b(5)*b(7)
_[20]=b(3)^2-b(5)*b(6)+b(3)*b(10)
_[21]=b(2)*b(3)-b(5)*b(7)
_[22]=b(1)*b(3)-b(5)*b(9)
_[23]=b(2)^2-b(4)*b(8)
_[24]=b(1)*b(2)-b(3)*b(8)
_[25]=b(1)^2-b(4)*b(9)
_[26]=b(5)*b(9)*b(10)
_[27]=b(3)*b(9)*b(10)
_[28]=a(4)*b(5)*b(10)
_[29]=a(4)*b(3)*b(10)
\end{verbatim}
\end{multicols}}}
\noindent
\emph{Singular} \cite{DGPS} provides a primary decomposition $\mathfrak{b}=\mathfrak{b}_1\cap \mathfrak{b}_2\cap\mathfrak{b}_3$ such that $\mathbb{V}_+(\mathfrak{b})\backslash\mathbb{V}_+(\mathfrak{\widehat{d}})=\mathbb{V}_+(\mathfrak{b}_2)\backslash\mathbb{V}_+(\mathfrak{\widehat{d}})$. Therefore, we get
$$MGC(A)=\mathbb{V}_+(b_1,b_2,b_4,b_7,b_8,b_9,b_3^2-b_5b_6+b_3b_{10})\backslash\left(\mathbb{V}_+(a_4)\cup\mathbb{V}_+(b_{10})\cup\mathbb{V}_+ (b_3,b_5)\right).$$
\noindent
in $\mathbb{P}^{14}$. We can eliminate some of the variables and consider $MGC(A)$ to be the following variety:
$$MGC(A)=\mathbb{V}_+(b_3^2-b_5b_6+b_3b_{10})\backslash\left(\mathbb{V}_+(a_4)\cup\mathbb{V}_+(b_{10})\cup\mathbb{V}_+ (b_3,b_5)\right)\subset \mathbb{P}^8.$$
\noindent
Therefore, any minimal Gorenstein cover is of the form $G=R/\ann H$, where
$$H=a_1y_3^2+a_2y_2y_3+a_3y_1y_3+a_4y_2^2+a_5y_1y_2+b_3y_1y_3^2+b_5y_3^3+b_6y_1^2y_3-b_{10}y_1^4$$
\noindent
satisfies $b_3^2-b_5b_6+b_3b_{10}=0$, $a_4\neq 0$, $b_{10}\neq 0$ and either $b_3\neq 0$ or $b_5\neq 0$ (or both).

\noindent
Moreover, note that $\mathbb{V}_+(\mathfrak{c})\backslash\mathbb{V}_+(\mathfrak{a})=\mathbb{V}_+(\mathfrak{c_2})\backslash\mathbb{V}_+(\mathfrak{a})$, where $\mathfrak{c}=\mathfrak{c}_1\cap \mathfrak{c}_2\cap \mathfrak{c}_3$ is the primary decomposition of $\mathfrak{c}$ and $\mathfrak{c}_2=\mathfrak{b}_2+(v_2,v_1b_5-v_3b_3-v_3b_{10},v_1b_3-v_3b_6)$. Hence, any $K_H$ such that $K_H\circ H=I^\perp$ will be of the form $K_H=(L_1,L_2,L_3^2)$, where $L_1,L_2,L_3$ are independent linear forms in $R$ such that $L_3=v_1x_1+v_3x_3$, with $v_1b_5-v_3b_3-v_3b_{10}=v_1b_3-v_3b_6=0$.
\end{example}

\begin{example}
Consider $A=R/I$, with $R=\res[\![x_1,x_2,x_3]\!]$ and $I=(x_1^2,x_2^2,x_3^2,x_1x_2)$. Note that $\HF_A=\lbrace 1,3,2\rbrace$ and $\tau(A)=2$. Doing analogous computations to the previous examples, \emph{Singular} provides the following variety:
$$MGC(A)=\mathbb{P}^7\backslash\mathbb{V}_+(b_2^2-b_1b_3)$$
\noindent
The coordinates of points in $MGC(A)$ are of the form $(a_1:\dots:a_4:b_1:b_2:b_3:b_4)\in \mathbb{P}^7$ and they correspond to a polynomial
$$H=b_1y_1^2y_3+b_2y_1y_2y_3+b_3y_2^2y_3+b_4y_3^3+a_1y_3^2+a_2y_2^2+a_3y_1y_2+a_4y_1^2$$
\noindent
such that $b_2^2-b_1b_3\neq 0$. Any $G=R/\ann H$ is a minimal Gorenstein cover of colength 2 of $A$ and all such covers admit $(x_1,x_2,x_3^2)$ as the corresponding $K_H$.
\end{example}

\begin{example}
Consider $A=R/I$, with $R=\res[\![x_1,x_2,x_3,x_4]\!]$ and $I=(x_1^2,x_2^2,x_3^2,x_4^2,x_1x_2,x_1x_3,x_1x_4,x_2x_3,x_2x_4)$. Note that $\HF_A=\lbrace 1,4,1\rbrace$ and $\tau(A)=3$. Doing analogous computations to the previous examples, \emph{Singular} provides the following variety:
$$MGC(A)=\mathbb{V}_+(b_6b_{10}-b_9b_{16})\backslash\left(\mathbb{V}_+(d_1)\cup\mathbb{V}_+(d_2)\right)\subset \mathbb{P}^{12},$$
\noindent
where $d_1=(a_7a_9-a_8^2)$ and $d_2=(b_9^2b_{16}-b_{10}^3,b_6b_9-b_{10}^2,b_6^2-b_{10}b_{16})$. The coordinates of points in $MGC(A)$ are of the form $(a_1:\dots:a_9:b_6:b_9:b_{10}:b_{16})\in \mathbb{P}^{12}$ and they correspond to a polynomial
$$H=b_{16}y_3^3+b_6y_3^2y_4+b_{10}y_3y_4^2+b_9y_4^3+a_9y_1^2+a_8y_1y_2+a_7y_2^2+$$
$$+a_6y_1y_3+a_5y_2y_3+a_4y_3^2+a_3y_1y_4+a_2y_2y_4+a_1y_4^2$$
\noindent
such that $G=R/\ann H$ is a minimal Gorenstein cover of colength 2 of $A$. Moreover, any $K_H$ such that $K_H\circ H=I^\perp$ will be of the form $K_H=(L_1,L_2,L_3,L_4^2)$, where $L_1,L_2,L_3,L_4$ are independent linear forms in $R$ such that $L_4=v_3x_3+v_4x_4$, with $v_3b_9-v_4b_{10}=v_3b_6-v_4b_{16}=0$.
\end{example}

As in the case of colength 1, we now provide a table for the computation times of $MGC(A)$ of all isomorphism classes of local $\res$-algebras $A$ of length equal or less than 6 such that $\gcl(A)=2$.

{\small{
\begin{table}[h]
\begin{tabular}{|c|c|c|}
\hline
$\HF(R/I)$ & $I$ & t(s)\\
\hline
$1,3,1 $ &  $x_1x_2,x_1x_3,x_1^2,x_2^2,x_3^2$ & 0,42\\
$1,2,2,1 $ &  $x_1^2,x_1x_2^2,x_2^4$ & 0,18\\
$1,3,1,1 $ &  $x_1x_2,x_1x_3,x_2x_3,x_2^2,x_3^2-x_1^3$ & 3,56\\
$1,3,2 $ &  $x_1x_2,x_2x_3,x_3^2,x_2^2-x_1x_3,x_1^3$ & 4,4\\
&  $x_1x_2,x_3^2,x_1x_3-x_2x_3,x_1^2+x_2^2-x_1x_3$ & 1254,34\\
&  $x_1x_2,x_1x_3,x_2^2,x_3^2,x_1^3$ & 3,33\\
&  $x_1x_2,x_1x_3,x_2x_3,x_1^2+x_2^2-x_3^2$ & 4,61\\
&  $x_1^2,x_1x_2,x_2x_3,x_1x_3+x_2^2-x_3^2$ & 4,09\\
&  $x_1^2,x_1x_2,x_2^2,x_3^2$ & 0,45\\
$1,4,1 $ &  $x_1^2,x_2^2,x_3^2,x_4^2,x_1x_2,x_1x_3,x_1x_4,x_2x_3,x_2x_4$ & 242,28\\
\hline
\end{tabular}
\medskip
\caption[AlMGC2]{Computation times of $MGC(A)$ of local rings $A=R/I$ with $\ell(A)\leq 6$ and $\gcl(A)=2$.}
\label{table:TMGC2}
\end{table}}}


\begin{thebibliography}{10}

\bibitem{Ana08}
H.~Ananthnarayan, \emph{The {G}orenstein colength of an {A}rtinian local ring},
  J. Algebra \textbf{320} (2008), no.~9, 3438--3446.


\bibitem{Ana09}
H.~Ananthnarayan, \emph{Computing {G}orenstein colength}, J. Commut. Algebra \textbf{1}
  (2009), no.~3, 343--359.

\bibitem{CLO97}
D. Cox, J. Little and D. O'Shea \emph{Ideals, varieties and algorithms}, Second Edition, Springer, 1997, Undergraduate texts in mathematics, 0-387-94680-2.

\bibitem{DGPS}
W.~Decker, G.-M. Greuel, G.~Pfister, and H.~Sch\"onemann, \emph{{\sc Singular}
  {4-0-1} --{A} computer algebra system for polynomial computations},
  www.singular.uni-kl.de, 2014.


\bibitem{E-InvSyst14}
J.~Elias, \emph{{\sc Inverse-syst.lib}--{S}ingular library for computing
  {M}acaulay's inverse systems},
  http://www.ub.edu/C3A/elias/inverse-syst-v.5.2.lib, arXiv:1501.01786, 2015.

\bibitem{EH18}
J.~Elias and R.~Homs, \emph{On low Gorenstein colength}, J. Algebra \textbf{513} (2008), no.~1, 368--387.

\bibitem{EM07}
M.~Elkadi and B.~Mourrain, \emph{Introduction \`a la r\'esolution des syst\`emes polynomials}, Springer \textbf{59},
pp.307, 2007, Math\'ematiques et Applications, 978-3-540-71646-4.

\bibitem{ES17}
J.~Elias and M.~Silva-Takatuji, \emph{On {T}eter rings}, Proc. Royal Soc. Edim.
  \textbf{147 A} (2017), no.~1, 125--139.

\bibitem{EV08}
J.~Elias and G.~Valla, \emph{Structure theorems for certain {G}orenstein
  ideals}, Michigan Math. J. \textbf{57} (2008), 269--292, Special volume in
  honor of Melvin Hochster.

\bibitem{HV06}
C.~Huneke, A.~Vraciu, \emph{Rings which are almost {G}orenstein}, Pacific J. Math. \textbf{225} (1) (2006),85–102.

\bibitem{Mou96}
B.~Mourrain, \emph{Isolated points, duality and residues}, J. of Pure and Applied Algebra, Elsevier,
\textbf{117-118} (1996), 469--493.


\bibitem{Poo08a}
B.~Poonen, \emph{Isomorphism types of commutative algebras of finite rank over
  an algebraically closed field}, Computational arithmetic geometry, Contemp.
  Math., vol. 463, Amer. Math. Soc., Providence, RI, 2008, pp.~111--120.

\bibitem{Sal79c}
J.~Sally, \emph{Stretched {G}orenstein rings}, J. London Math. Soc., \textbf{20}(2):19--26, 1979.

\bibitem{Tet74}
W.~Teter, \emph{Rings which are a factor of a {G}orenstein ring by its socle},
  Invent. Math. \textbf{23} (1974), 153--162.

\end{thebibliography}
\end{document}